\providecommand{\U}[1]{\protect\rule{.1in}{.1in}}
\newtheorem{theorem}{Theorem}
\newtheorem{lemma}{Lemma}[section]
\newtheorem{corollary}[lemma]{Corollary}
\newtheorem*{definition*}{Definition}
\newtheorem{problem}[theorem]{Problem}
\newtheorem{proposition}[lemma]{Proposition}
\renewenvironment{proof}[1][Proof]{\noindent\textbf{#1.} }{\ \rule{0.5em}{0.5em}}
\newenvironment{acknowledgement}{\textbf{Acknowledgement.}}{}
\theoremstyle{definition}
\newtheorem*{remark}{Remark}
\newtheorem{example}[lemma]{Example}
\newcommand{\Aut}{\mathrm{Aut}}
\newcommand{\Alt}{\mathrm{Alt}}
\newcommand{\supp}{\mathrm{supp}}
\newcommand{\FAlt}{\mathrm{FAlt}}
\newcommand{\FSym}{\mathrm{FSym}}
\newcommand{\Sub}{\mathrm{Sub}}
\newcommand{\Fix}{\mathrm{Fix}}
\newcommand{\Sh}{\mathrm{Sh}}
\renewcommand{\P}{\mathbb{P}}
\newcommand{\N}{\mathbb{N}}
\newcommand{\Z}{\mathbb{Z}}
\newcommand{\Rst}{\mathrm{Rst}}
\newcommand{\Stab}{\mathrm{Stab}}
\newcommand{\acts}{\curvearrowright}
\begin{document}

\title{Invariant random subgroups of groups acting on rooted trees}
\author{Ferenc Bencs\footnote{partially supported by the MTA Rényi Institute Lendület Limits of
Structures Research Group.} \ and L\'{a}szl\'{o} M\'{a}rton T\'{o}th\footnote{supported by the ERC Consolidator Grant 648017. Both partially supported by the Hungarian National Research, Development
and Innovation Office, NKFIH grant K109684.}}
\date{}
\maketitle

\begin{abstract}
We investigate invariant random subgroups in groups acting on rooted trees. Let $\Alt_f(T)$ be the group of finitary even automorphisms of the $d$-ary rooted tree $T$. We prove that a nontrivial ergodic IRS of $\Alt_f(T)$ that acts without fixed points on the boundary of $T$ contains a level stabilizer, in particular it is the random conjugate of a finite index subgroup.  

Applying the technique to branch groups we prove that an ergodic IRS in a finitary regular branch group contains the derived subgroup of a generalized rigid level stabilizer. We also prove that every weakly branch group has continuum many distinct atomless ergodic IRS's. This extends a result of Benli, Grigorchuk and Nagnibeda who exhibit a group of intermediate growth with this property.
\end{abstract}

\section{Introduction} \label{section:intro}

For a countable discrete group $\Gamma$ let $\Sub_{\Gamma}$ denote the compact space of subgroups  $H \leq \Gamma$, with the topology induced by the product topology on $\{0,1\}^{\Gamma}$. The group $\Gamma$ acts on $\Sub_{\Gamma}$ by conjugation. An \emph{invariant random subgroup} (IRS) of $\Gamma$ is a Borel probability measure on $\Sub_{\Gamma}$ that is invariant with respect to the action of $\Gamma$.

Examples include Dirac measures on normal subgroups and uniform random conjugates of finite index subgroups. More generally, for any p.m.p.\ action $\Gamma \acts (X,\mu)$ on a Borel probability space $(X,\mu)$, the stabilizer $\Stab_{\Gamma}(x)$ of a $\mu$-random point $x$ defines an IRS of $\Gamma$. Ab\'ert, Glasner and Vir\'ag  \cite{abert2014kesten} proved that all IRS's of $\Gamma$ can be realized this way. 

A number of recent papers have been studying the IRS's of certain countable discrete groups. Vershik \cite{vershik2012totally} characterized the ergodic IRS's of the group $\FSym(\N)$ of finitary permutations of a countable set. In \cite{abert2012growth} the authors investigate IRS's in lattices of Lie groups. Bowen \cite{bowen2012invariant} and Bowen-Grigorchuk-Kravchenko \cite{bowen2015invariant} showed that there exists a large ``zoo'' of IRS's of non-abelian free  groups and the lamplighter groups $(\Z / p\Z)^n \wr \Z$ respectively. Thomas and Tucker-Drob \cite{thomas2014invariant, thomas2018invariant} classified the ergodic IRS's of strictly diagonal limits of finite symmetric groups and inductive limits of finite alternating groups. Dudko and Medynets \cite{dudko2019invariant} extend this in certain cases to blockdiagonal limits of finite symmetric groups. 

In this paper we study the IRS's of groups of automorphisms of rooted trees. Let $T$ be the infinite $d$-ary rooted tree, and let $\Aut(T)$ denote the group of automorphisms of $T$. 

An \emph{elementary} automorphism applies a permutation to the children of a given vertex, and moves the underlying subtrees accordingly. The group of \emph{finitary} automorphisms $\Aut_f(T)$ is generated by the elementary automorphisms. The \emph{finitary alternating automorphism group} $\Alt_f (T)$  is the one generated by even elementary automorphisms.  

The group $\Aut(T)$ comes together with a natural measure preserving action. The \emph{boundary} of $T$ -- denoted $\partial T$ -- is the space of infinite rays of $T$. It is a compact metric space with a continuous $\Aut(T)$ action and an ergodic invariant measure $\mu_{\partial T}$. 

For some natural classes of groups IRS's tend to behave like normal subgroups. In \cite{abert2012growth} the Margulis Normal Subgroup Theorem  is extended to IRS's, it is shown that every nontrivial ergodic IRS of a lattice in a higher rank simple Lie group is a random conjugate of a finite index subgroup. On the other hand, the finitary alternating permutation group $\FAlt(\N)$ is simple, in particular it has no finite index subgroups, but as Vershik shows in \cite{vershik2012totally} it admits continuum many ergodic IRS's. 

The group $\Alt_f(T)$ is an interesting mixture of these two worlds. It is both locally finite and residually finite, and all its nontrivial normal subgroups are level stabilizers. The Margulis Normal Subgroup Theorem does not extend to IRS's, as the stabilizer of a random boundary point gives an infinite index ergodic IRS.  However, once we restrict our attention to IRS's without fixed points, the picture changes. 

\begin{theorem} \label{theorem:alternating_finitary_ffp}
Let $H$ be a fixed point free ergodic IRS of $\Alt_f (T)$, with $d \geq 5$. Then $H$ is the uniform random conjugate of a finite index subgroup. In other words $H$ contains a level stabilizer.
\end{theorem}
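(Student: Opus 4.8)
The plan is to reduce the statement to a single positive-probability event about level stabilizers, and then to build up such a configuration from the fixed point free hypothesis using the simplicity of the finite alternating groups, the case $d \geq 5$ being exactly what makes these groups nonabelian simple. Write $L_n$ for the set of vertices of $T$ at level $n$, $T_v$ for the subtree hanging below a vertex $v$, and $\mathrm{St}(n) \leq \Alt_f(T)$ for the pointwise stabilizer of $L_n$. Since the nontrivial normal subgroups of $\Alt_f(T)$ are precisely the level stabilizers, each event $E_n = \{ H \supseteq \mathrm{St}(n)\}$ is conjugation invariant, so ergodicity forces $\P(E_n) \in \{0,1\}$; moreover the $E_n$ are increasing in $n$ because $\mathrm{St}(n) \supseteq \mathrm{St}(n+1)$. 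It therefore suffices to prove $\P(E_n) > 0$ for some $n$: on $E_n$ the image of $H$ in the finite group $\Alt_f(T)/\mathrm{St}(n)$ is an ergodic IRS of a finite group, hence a uniform random conjugate of a subgroup, and pulling back gives the asserted uniform random conjugate of a finite index subgroup. I would also record the reformulation of the hypothesis: a subgroup fixes a boundary ray exactly when it fixes a vertex on every level along it, so by König's lemma applied to the finitely branching tree of $H$-fixed vertices, $H$ is fixed point free precisely when $H$ fixes no vertex of $L_N$ for some (random) level $N$.

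The first substantive step is to manufacture, with positive probability, a nontrivial element of $H$ whose support is confined to a single subtree $T_v$. Starting from a level $N$ at which $H$ has no fixed vertex, I would pick $h \in H$ moving some $v \in L_N$ and form commutators $[h, h^g]$ with suitable $g \in \Alt_f(T)$ chosen so that the supports of $h$ and $h^g$ meet in a controlled way; such commutators are products of even automorphisms, so remain in $\Alt_f(T)$, and their support can be driven into a single $T_v$. The role of $d \geq 5$ already appears here, in guaranteeing enough room on each level to separate supports and to keep the relevant commutators nontrivial. The outcome I am after is that with positive probability $H$ contains some $1 \neq k \in \Rst(v)$, where $\Rst(v)$ is the rigid stabilizer of $v$, identified with a copy of $\Alt_f(T_v)$.

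With a nontrivial $k \in H \cap \Rst(v)$ in hand, the amplification step uses simplicity. Projecting $k$ to its action on the vertices of $L_{|v|+m}$ lying in $T_v$, for $m$ large enough that this projection is nontrivial, $k$ becomes a nontrivial element of a finite alternating group whose normal closure — computed inside the alternating automorphisms supported on $T_v$, which normalize $H \cap \Rst(v)$ appropriately — is the whole group, because $d \geq 5$ makes it simple. Tracking this back up the tree, I would conclude that with positive probability $H$ contains the full level stabilizer $\mathrm{St}_{T_v}(m)$ of the subtree $T_v$; equivalently, $H$ contains a nontrivial normal subgroup of $\Rst(v)$, which by the level-stabilizer description of normal subgroups is such a stabilizer.

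The remaining, and I expect hardest, step is globalization: passing from ``with positive probability $H$ contains a level stabilizer of one subtree $T_v$'' to $\P(E_n) > 0$. Writing $\mathrm{St}(n) = \langle \Rst(w) : w \in L_n \rangle$ (this is where the even/parity bookkeeping of the full proof enters), one has $E_n = \bigcap_{w \in L_n} \{ H \supseteq \Rst(w)\}$, so the good configuration is needed at all level-$n$ vertices simultaneously, not merely at one. Conjugation invariance makes the event for a fixed $w$ have probability independent of $w$, but positivity at each single $w$ does not by itself give positivity of the intersection. I would close this gap by strengthening the local step so that it holds simultaneously on a large set of vertices — using fixed point freeness at a common deep level together with a pigeonhole over the finitely many orbit-configurations on $L_n$ — and then invoking ergodicity: the invariant event $E_n$ cannot have measure strictly between $0$ and $1$, so once the strengthened local analysis shows $E_n$ is not null it has full measure, and a level stabilizer is contained almost surely. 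Controlling the random depth $N$ uniformly enough to land on a single common level $n$ is the delicate point, and is where I would expect to spend most of the effort.
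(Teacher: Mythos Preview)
Your reduction in the opening paragraph is fine: ergodicity makes the events $E_n=\{H\supseteq \mathrm{St}(n)\}$ zero--one, so it suffices to make one of them positive. The trouble begins with your local step. You propose to form commutators $[h,h^g]$ with $h\in H$ and $g\in \Alt_f(T)$ chosen to control supports, and then to take the normal closure of the resulting element inside $\Rst(v)$, asserting that the relevant automorphisms ``normalize $H\cap\Rst(v)$ appropriately.'' They do not. An IRS is invariant only \emph{in distribution}; a fixed realization $H$ is almost never normal, so $h^g\notin H$ for generic $g$, and $\Rst(v)$ does not normalize $H\cap\Rst(v)$. Both your localization and your amplification steps silently import the Grigorchuk normal-subgroup argument, which is exactly what fails for IRS's. (There is also a side issue: the projection of $\Rst(v)$ to level $|v|+m$ is $A_d^{\mathrm{wr}(m)}$, which is not simple for $m\ge 2$; its nontrivial normal subgroups are the level stabilizers. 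So even granting normality one does not get the whole group by normal closure, only a level stabilizer --- but that is the lesser problem.)

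The paper's argument is built precisely to bypass this. It never conjugates inside a single realization of $H$. Instead it first proves that the orbit-closures of $H$ on $\partial T$ are clopen (Theorem~\ref{theorem:clopen}), so some deterministic finite set $S\subseteq\Gamma_k$ generating the right orbit pattern on $\mathcal{L}_{k_0}$ satisfies $\P[S\subseteq H]=p>0$. Passing to the finite groups $\Gamma_n$, the ergodic components of $H\cap\Gamma_n$ are uniform random conjugates of fixed subgroups $L\le\Gamma_n$, and now the IRS property becomes a \emph{counting} statement: at least a $p/2$ fraction of the $\Gamma_n$-conjugates of $S$ lie in $L$. Proposition~\ref{proposition:few_exceptions} turns ``$L$ contains many conjugates of $S$'' into ``$L$ contains $\Rst''_{\Gamma_n}(\mathcal{L}_m)$'' for some $m$ independent of $n$; letting $n\to\infty$ and using ergodicity gives $\mathrm{St}(m)\subseteq H$ almost surely. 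The conceptual point you are missing is that invariance of the law replaces normality of the subgroup, and the way to cash this in is through a density-of-conjugates argument in finite quotients, not through commutators inside one realization.
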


Note, that an IRS $H$ is \emph{fixed point free} if it has no fixed points on $\partial T$ almost surely. In general let $\Fix(H)$ denote the closed subset of fixed points of $H$ on $\partial T$.

When we do not assume fixed point freeness IRS's of $\Alt_f(T)$ start to behave like the ones in $\FAlt(\N)$. In the case of $\FAlt(\N)$, any nontrivial ergodic IRS contains a specific (random) subgroup that arises by partitioning the base space in an invariant random way and then taking the direct sum of deterministic subgroups on the parts. We proceed to define a (random) subgroup of $\Alt_f(T)$ which highly resembles these subgroups. For a group $\Gamma$ acting on a set $X$ the \emph{pointwise stabilizer} of a subset $C \subseteq X$ is denoted by $\Stab_{\Gamma}(C)=\{\gamma \in \Gamma \mid \gamma c= c, \  \forall c \in C \}$.
\begin{figure}[h]
   \centering
   \includegraphics[width=12.5cm]{./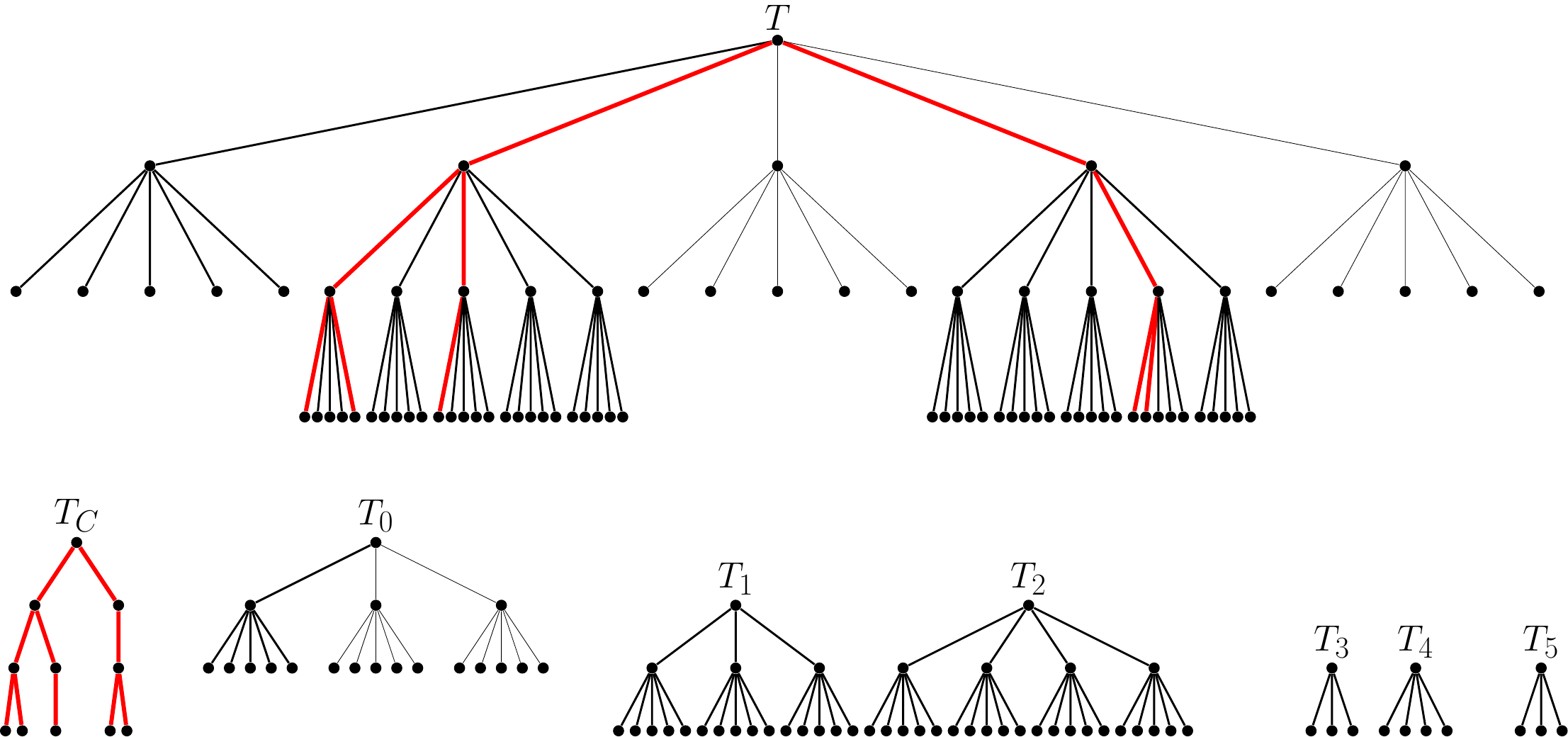}
   \caption{Decomposition of $T$ with respect to $C$}
   \label{fig:intro_trees}
\end{figure}

Every closed subset $C \subseteq \partial T$ corresponds to a rooted subtree $T_C$ with no leaves. The complement of $T_C$ in $T$ is a union of subtrees $T_0, T_1, \ldots$ as in Figure \ref{fig:intro_trees}. Choose an integer $m_i$ for each $T_i$, and let $\mathcal{L}_{m_i}(T_i)$ stand for the $m_i^{\mathrm{th}}$ level of the tree $T_i$. We define $L\big(C, (m_i)\big)$ to be the direct sum of level stabilizers in the $T_i$:

\[L\big(C, (m_i)\big)= \bigoplus_{i \in \N} \Stab_{\Alt_f(T)}\big(\mathcal{L}_{m_i}(T_i)\big).\]

It is easy to see that $\Fix\big(L\big(C, (m_i)\big)\big)=C$. We call such an $L\big(C, (m_i)\big)$ a \emph{generalized congruence subgroup} with respect to the fixed point set $C$.\\
\indent
Let $\widetilde{C}$ be the translate of $C$ with a Haar-random element from the compact group $\Alt(T) = \overline{\Alt_f (T)}$. Then $L\big(\widetilde{C},(m_i)\big)$ becomes an ergodic IRS of $\Alt_f(T)$ with fixed point set $\widetilde{C}$. 

\begin{theorem} \label{theorem:finitary_alternating}
Let $H$ be an ergodic IRS of $\Alt_f(T)$, with $d \geq 5$. Then $\Fix(H)$ is the Haar-random translate of a fixed closed subset $C$. Moreover, there exists $(m_i)$ such that the generalized congruence subgroup $L\big(\Fix(H),(m_i)\big)$ is contained in $H$ almost surely. 
\end{theorem}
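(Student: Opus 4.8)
The plan is to prove Theorem \ref{theorem:finitary_alternating} by bootstrapping from the fixed point free case (Theorem \ref{theorem:alternating_finitary_ffp}). The key idea is that once we understand the fixed point set $\Fix(H)$, the action of $H$ away from its fixed points should again be ``fixed point free'' in an appropriate relative sense, allowing us to import the structure theorem locally on each complementary subtree $T_i$.

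\textbf{Step 1: Identifying the fixed point set.} First I would show that $\Fix(H)$ is the Haar-random translate of a deterministic closed set $C$. The map $H \mapsto \Fix(H)$ is $\Alt_f(T)$-equivariant, pushing the IRS forward to an invariant random closed subset of $\partial T$. Since the IRS is ergodic, this pushforward is an ergodic invariant random closed set. The plan is to argue that any such object must be the Haar-random translate of a fixed $C$: the stabilizer of $C$ in $\Alt(T)$ is a closed subgroup, and ergodicity should force the measure to be the image of Haar measure on $\Alt(T)$ under $g \mapsto gC$. I expect this to follow from the fact that $\Alt(T)=\overline{\Alt_f(T)}$ is compact and acts transitively enough on the relevant space of closed sets; the group generated by $\Alt_f(T)$ acting on $\partial T$ has dense orbits in a way that pins down $C$ up to the Haar translate.

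\textbf{Step 2: Conditioning on the fixed point set.} Having fixed $C=\Fix(H)$ (after derandomizing the translate by ergodic decomposition of the stabilizer action, or by working on the event $\{\Fix(H)=C\}$), I would analyze $H$ as a subgroup acting on $T$ with the decomposition $T = T_C \sqcup \bigsqcup_i T_i$ from Figure \ref{fig:intro_trees}. Since $H$ fixes $C$ pointwise, $H$ preserves each subtree $T_i$ (it cannot permute them, as permuting the $T_i$ would move points of $C$). Thus $H$ projects into $\prod_i \Alt_f(T_i)$, and the projection $H_i$ onto each factor is itself an IRS-like object. The crucial observation is that $H_i$ should be \emph{fixed point free} on $\partial T_i$: any fixed point of $H_i$ on $\partial T_i$ would be a fixed point of $H$ on $\partial T$ lying outside $C=\Fix(H)$, a contradiction. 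This is where I would invoke Theorem \ref{theorem:alternating_finitary_ffp} on each $T_i$ to conclude that the projection contains a level stabilizer $\Stab_{\Alt_f(T_i)}(\mathcal{L}_{m_i}(T_i))$ for some level $m_i$.

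\textbf{Step 3: From projections to a direct sum inside $H$.} The main obstacle, which I would tackle last, is upgrading the statement ``each projection $H_i$ contains a level stabilizer'' to the global statement ``$H$ contains the direct sum $\bigoplus_i \Stab(\mathcal{L}_{m_i}(T_i))$.'' Containment of a projection does not automatically yield containment of the full subgroup, because $H$ could be a ``twisted'' or diagonal subgroup of the product. The plan here is to use a commutator or simplicity argument: the level stabilizers $\Stab_{\Alt_f(T_i)}(\mathcal{L}_{m_i}(T_i))$ are generated by alternating automorphisms supported deep inside a single $T_i$, and since $\Alt_f$ on a $d$-ary tree with $d \geq 5$ has strong simplicity and perfectness properties, I expect that the subgroup of $H$ supported on $T_i$ alone (the kernel of projection to the other factors, intersected with $H$) already contains the relevant level stabilizer. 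Concretely, for elements supported on $T_i$ one can take commutators with elements of $H$ supported elsewhere to kill off the unwanted coordinates, exploiting that these supports are disjoint so the commutators land back in $H$ and are genuinely supported on $T_i$. Controlling the level $m_i$ uniformly enough that the direct sum lies in $H$ almost surely, rather than merely in probability, will require combining ergodicity with the perfectness of the level-stabilizer derived subgroups; this measurability-and-uniformity bookkeeping is the step I anticipate being the most delicate.
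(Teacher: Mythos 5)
Your Step 1 is correct and is exactly what the paper does (Lemmas \ref{lemma:measurable_fixedpoint_set}, \ref{lemma:closure_invariant} and \ref{lemma:unique}), and your Step 2 appears in the paper as Proposition \ref{proposition:action_on_T_i}. But the paper explicitly warns, in the remark following that proposition, that it does not see how to make precisely your bootstrap work, and the reason is concentrated in your Step 3, which as written has a genuine gap. The commutator mechanism you describe does not produce elements of $H$: if $h_1\in H$ has nontrivial restriction to $T_i$ and $h_2\in H$ is supported away from $T_i$, then $[h_1,h_2]$ restricted to $T_i$ is $[h_1\vert_{T_i},1]=1$, i.e.\ the commutator kills the $T_i$-coordinate rather than the unwanted ones. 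To isolate the $T_i$-coordinate you would need to commute $h_1$ with an element supported \emph{inside} $T_i$; if that element is taken from $H$ the argument is circular (its existence in $H$ is what you are proving), and if it is taken from $\Gamma\setminus H$ then $[h_1,f]=h_1^{-1}h_1^{f}$ lies in $H^{-1}H^{f}$, not in $H$, since $H$ is only invariant in distribution, not normal. Knowing that each projection $\varphi_i(H)$ contains a level stabilizer is consistent a priori with $H\cap\Alt_f(T_i)$ being much smaller (Example \ref{example:coupling} shows nontrivial couplings between components really occur), and nothing in your sketch rules this out.

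The paper's actual route does not pass through Theorem \ref{theorem:alternating_finitary_ffp} at all (indeed that theorem is itself only proved as a byproduct of the general machinery). Instead it works in the finite truncations $\Gamma_n$, takes an ergodic component $L$ of $H\cap\Gamma_n$ containing a fixed finite subgroup $S$ with probability bounded below, and converts ``a positive proportion of $\Rst_{\Gamma_n}(\mathcal{L}_k)$-conjugates of $S$ lie in $L$'' into honest containment via Proposition \ref{proposition:few_exceptions}: the sections of these conjugates over a cycle of $s\in S$ with one vertex discarded surject onto a bounded-index subgroup of a product, which therefore contains a genuinely \emph{normal} subgroup $N$ of bounded index to which Grigorchuk's double-commutator trick applies; running this for three different choices of discarded vertex ($D$, $E$, $F$) and intersecting recovers elements of $L$ itself with controlled support, yielding $\Rst''$ of a level stabilizer inside $L$, after which ergodicity and a decreasing-events argument upgrade positive probability to almost sure containment. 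This replacement of normality by ``many conjugates'' plus the discard-a-vertex device is the missing idea; also note, as a smaller point, that the subtrees $T_i$ are only eventually $d$-ary, so even invoking Theorem \ref{theorem:alternating_finitary_ffp} on them requires the extension mentioned in Example \ref{example:fixed_point_free_IRS}.
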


We can exploit our methods to prove new results on branch groups as well. We postpone the formal definition of branch groups to Section \ref{section:preliminaries}. The examples to keep in mind are the groups $\Aut_f(T)$, $\Alt_f (T)$ and groups defined by finite automata, such as the first Grigorchuk group $\mathfrak{G}$. 

In \cite{dudko2018diagonal} Dudko and Grigorchuk show that branch groups admit infinitely many distinct atomless (continuous) ergodic IRS's. In the ergodic case being atomless means that the measure is not supported on a finite set.
In \cite{benli2015universal} Benli, Grigorchuk and Nagnibeda exhibit a group of intermediate growth $U_{\Lambda}$  with continuum many distinct atomless ergodic IRS's. 
We are able to find continuum many such IRS's in weakly branch groups in general.

\begin{theorem}\label{theorem:continuum_many}
Every weakly branch group admits continuum many distinct atomless ergodic IRS's.
\end{theorem}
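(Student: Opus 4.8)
The plan is to build the required IRS's directly from the ergodic boundary action $\Gamma \acts (\partial T, \mu_{\partial T})$, exploiting the nontrivial rigid stabilizers that characterize weak branchness. Let $\Gamma \leq \Aut(T)$ be weakly branch, so that $\Gamma$ is level-transitive and $\Rst_\Gamma(v) \neq 1$ for every vertex $v$; level-transitivity makes the boundary action ergodic with respect to the uniform measure $\mu_{\partial T}$. I fix a $\mu_{\partial T}$-random ray $\xi = (v_0, v_1, v_2, \dots)$ through $T$, and for each level $n \geq 1$ I let $U_n(\xi)$ denote the forest of subtrees hanging off $\xi$ at level $n$, namely the subtrees rooted at the children of $v_{n-1}$ other than $v_n$. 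Since the branching degree is at least two this forest is nonempty and $\Rst_\Gamma(U_n(\xi)) \neq 1$. For an infinite subset $A \subseteq \N$ I then set
\[ H_A(\xi) = \big\langle \Rst_\Gamma(U_n(\xi)) : n \in A \big\rangle = \bigoplus_{n \in A} \Rst_\Gamma(U_n(\xi)), \]
the last equality holding because the summands have pairwise disjoint supports and hence commute; this is a genuine subgroup of $\Gamma$, being generated by finitely supported products of its factors.

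The first step is to check that $\Phi_A \colon \xi \mapsto H_A(\xi)$ is $\Gamma$-equivariant. Any $\gamma \in \Gamma$ preserves levels and carries $U_n(\xi)$ onto $U_n(\gamma\xi)$, so $\gamma \Rst_\Gamma(U_n(\xi)) \gamma^{-1} = \Rst_\Gamma(U_n(\gamma\xi))$ and therefore $\gamma H_A(\xi) \gamma^{-1} = H_A(\gamma\xi)$. Consequently the pushforward $\nu_A := (\Phi_A)_* \mu_{\partial T}$ is a conjugation-invariant probability measure on $\Sub_\Gamma$, i.e.\ an IRS; and since $\Phi_A$ is equivariant and $\mu_{\partial T}$ is ergodic, its pushforward $\nu_A$ is ergodic as well.

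The heart of the argument, and the step I expect to be the main obstacle, is to read the parameters $\xi$ and $A$ back off a sample subgroup $H = H_A(\xi)$; this simultaneously yields atomlessness and distinctness. The key observation is that the boundary support $\partial T \setminus \Fix(H)$ decomposes as the disjoint union $\bigsqcup_{n \in A} \big(\partial T \setminus \Fix(\Rst_\Gamma(U_n(\xi)))\big)$, where the $n$-th piece consists of rays that agree with $\xi$ up to level $n-1$ and branch away at level $n$, and each such piece is nonempty because a nontrivial tree automorphism necessarily moves some boundary ray. Since $A$ is infinite these branch-off events occur at infinitely many levels, which pin down the vertices $v_n$ and hence recover the spine $\xi$; with $\xi$ in hand, $A$ is recovered as the set of levels at which $\partial T \setminus \Fix(H)$ meets the corresponding off-ray subtrees. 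The technical care needed to make this reconstruction rigorous, in particular verifying that the level-$n$ supports are genuinely detectable and pairwise disjoint, is where most of the attention will be required.

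Granting the reconstruction, the conclusion is immediate. The map $\Phi_A$ is injective off a $\mu_{\partial T}$-null set, so $\nu_A$ inherits atomlessness from the atomless measure $\mu_{\partial T}$; and the measurable map $H \mapsto A$ is $\nu_A$-almost surely constant equal to $A$, so that $A \neq B$ forces $\nu_A$ and $\nu_B$ to disagree on this invariant and hence to be distinct. As there are continuum many infinite subsets $A \subseteq \N$, this produces continuum many pairwise distinct atomless ergodic IRS's of $\Gamma$, as claimed.
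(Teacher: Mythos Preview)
Your approach is genuinely different from the paper's and is sound in outline, but the reconstruction step has a gap that happens to require exactly the ingredient the paper isolates.

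The paper builds its continuum of IRS's as pointwise stabilizers $\Stab_\Gamma(\widetilde C)$ of Haar-random $\overline\Gamma$-translates of fixed closed sets $C\subseteq\partial T$; distinctness comes from the conjugation-invariant $[\Fix(H)]$ via Lemma~\ref{lemma:fix_stab_id}, and atomlessness from the coloring argument culminating in Corollary~\ref{corollary:atomless_IRCS}. Your construction instead draws randomness directly from the ergodic boundary action and assembles $H_A(\xi)$ as a direct sum of rigid stabilizers along a random spine, indexed by subsets $A\subseteq\N$. This sidesteps the closure $\overline\Gamma$, the Haar measure, and all of Lemmas~\ref{lemma:closure_invariant}, \ref{lemma:unique}, \ref{lemma:infinite_orbit} and Corollaries~\ref{corollary:atomless_coloring}, \ref{corollary:atomless_IRCS}, so it is lighter on machinery; the paper's family in exchange is broader (parametrized by arbitrary closed $C$) and its IRS's are natural pointwise stabilizers rather than ad hoc direct sums.

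The gap is in recovering $\xi$. Knowing only that each piece $\partial T\setminus\Fix\big(\Rst_\Gamma(U_n(\xi))\big)$ is \emph{nonempty} does not pin down the spine. Indeed, for $n\in A$ and any sibling $w$ of $v_n$, weak branchness gives a nontrivial $\Rst_\Gamma(w')\leq\Rst_\Gamma(w)\leq H$ for every descendant $w'$ of $w$, so the support of $H$ meets $\Sh(w')$ at every depth just as it does below $v_n$; when $A$ is cofinite your stated criterion cannot single out $v_n$ among the children of $v_{n-1}$. What you actually need is the stronger statement that $\Rst_\Gamma(v)$ acts on $\Sh(v)$ \emph{without any fixed ray}. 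This is precisely what the proof of Lemma~\ref{lemma:fix_stab_id} shows: level-transitivity makes $\Stab_\Gamma(v)$ transitive on each level of $T_v$, so a nontrivial element of $\Rst_\Gamma(v)$ can be conjugated inside $\Rst_\Gamma(v)$ to move any prescribed ray. Granting this, one gets the clean formula
\[
\Fix\big(H_A(\xi)\big)=\{\xi\}\cup\bigcup_{m\notin A}\big(\Sh(v_{m-1})\setminus\Sh(v_m)\big),
\]
from which $\xi$ is the unique point of $\Fix(H)$ not in its interior, and then $A=\{m:\Sh(v_{m-1})\setminus\Sh(v_m)\not\subseteq\Fix(H)\}$. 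So your proposal works, but it ultimately rests on the same key lemma the paper uses; you should state and prove that fact rather than the weaker ``some ray is moved''. (Measurability of $\Phi_A$ also deserves a sentence, though it is routine.)
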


Note that the universal Grigorchuk group $U_{\Lambda}$ in \cite{benli2015universal} is not weakly branch, as it is not transitive on the levels. Nevertheless, by its construction it factors onto branch groups, which by Theorem \ref{theorem:continuum_many} have continuum many distinct atomless ergodic IRS's, and those can be lifted to distinct IRS's of $U_{\Lambda}$. Thus Theorem \ref{theorem:continuum_many} gives an alternate proof of the main result of \cite{benli2015universal}.

The main focus of this paper is to understand IRS's in branch groups via their action on the boundary. While Theorem \ref{theorem:continuum_many} is easy to state and covers a wide class of groups, in fact it is the byproduct of understanding $\Fix(H)$ along the way towards the more technical Theorem \ref{theorem:finitary}.

A key ingredient in Theorems \ref{theorem:alternating_finitary_ffp} and  \ref{theorem:finitary_alternating}
is to analyze the orbit-closures of IRS's on $\partial T$. For any subgroup $L \leq \Aut(T)$ taking the closures of orbits of $L$ gives an equivalence relation on $\partial T$, that is $L$ acts minimally on each class. It turns out that nontrivial orbit-closures of IRS's are necessarily clopen. 

\begin{theorem}\label{theorem:clopen}
Let $H$ be an ergodic IRS of a countable regular branch group $\Gamma$. Then almost surely all orbit-closures of $H$ on $\partial T$ that are not fixed points are clopen. In particular if $H$ is fixed point free, then $H$ has finitely many orbit-closures on $\partial T$ almost surely.
\end{theorem}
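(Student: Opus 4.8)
The plan is to translate clopenness into a statement about the finite‑level orbits of $H$, and then to use the branch structure together with conjugation invariance to rule out the single obstruction.

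First I would record the basic picture. Since $\Gamma\le\Aut(T)$ acts on $\partial T$ by isometries, the action is equicontinuous, so orbit closures partition $\partial T$ into minimal closed invariant sets, as noted just before the statement. For a ray $\xi$ with level‑$n$ truncation $\xi_n$ one has $\eta\in\overline{H\xi}$ iff $\eta_n$ lies in the orbit $\mathcal{O}_n:=\bar H_n\xi_n$ for every $n$, where $\bar H_n$ is the image of $H$ in $\mathrm{Sym}(\mathcal{L}_n(T))$. Because $\bar H_n$ is transitive on $\mathcal{O}_n$ and the subset $\mathcal{O}_{n+1}$ is equivariant, each vertex of $\mathcal{O}_n$ has the same number $c_n(\xi)\in\{1,\dots,d\}$ of children lying in $\mathcal{O}_{n+1}$. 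From this I read off a trichotomy: $\overline{H\xi}$ is a single point exactly when $c_n\equiv 1$ (equivalently $\xi\in\Fix(H)$); it is clopen, i.e. a finite union of cylinders, exactly when $c_n=d$ for all large $n$; and otherwise it is \emph{bad}, branching somewhere ($c_n\ge 2$ for some $n$) yet never eventually full. Thus Theorem \ref{theorem:clopen} reduces to showing that almost surely no orbit closure is bad; the final sentence is then immediate, since a partition of the compact space $\partial T$ into clopen blocks (no singletons, by fixed point freeness) must be finite.

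Second I would show that rigid stabilizers already force clopen orbit closures, so that it suffices to place one inside $H$. Since $\Gamma$ is regular branch over a finite‑index subgroup $K$, the projection of $\Rst_\Gamma(v)$ to the subtree $T_v$ contains a copy of $K$; being a finite‑index subgroup of the level‑transitive group $\Gamma$, $K$ has a bounded number of orbits on each level of $T_v$, hence only finitely many orbit closures on $\partial T_v=C_v$, and finitely many minimal components partitioning a space are automatically clopen. Consequently, if for some vertex $v$ on $\xi$ the group $H$ contains a subgroup of $\Rst_\Gamma(v)$ acting with finitely many orbit closures on $C_v$ — for instance a finite‑index subgroup, or the derived subgroup $[\Rst_\Gamma(v),\Rst_\Gamma(v)]$, which in a regular branch group still has this property — then $\overline{H\xi}$ contains a clopen neighbourhood of $\xi$ inside $C_v$, and minimality plus compactness force $\overline{H\xi}$ itself to be clopen. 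So the theorem reduces to the containment claim $(\star)$: almost surely, for every $\xi\notin\Fix(H)$ there is a vertex $v$ on $\xi$ with $[\Rst_\Gamma(v),\Rst_\Gamma(v)]\le H$.

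Third comes the crux, proving $(\star)$, and this is where I expect the main difficulty. Fix a bad (in particular non‑fixed) ray $\xi$ and let $n^*$ be its first branching level, with branch vertex $w=\xi_{n^*}$; then some $h\in H$ fixes $w$ and carries the child $u=\xi_{n^*+1}$ to a different child $u'$ of $w$. The standard branch‑group computation gives, for every $r\in\Rst_\Gamma(u)$, that $hrh^{-1}\in\Rst_\Gamma(u')$ and hence $[h,r]\in\Rst_\Gamma(u)\cdot\Rst_\Gamma(u')$; as $r$ ranges over $\Rst_\Gamma(u)$ these commutators generate a subgroup whose action fills the cylinders $C_u$ and $C_{u'}$. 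If $H$ were normal this would instantly deposit a derived rigid stabilizer into $H$ and finish the argument. The real work is to replace normality by the probabilistic invariance of the IRS: I would run this commutator computation inside the measure, using that $rHr^{-1}$ has the same distribution as $H$ together with an ergodic $0$–$1$ law to promote the single witnessing element $h$, present on a positive‑probability event, to the almost‑sure containment of a derived rigid stabilizer below some vertex of $\xi$. Carrying out this upgrade — turning ``$H$ contains one element permuting the children of $w$'' into ``$H$ contains an entire derived rigid stabilizer'', without the crutch of normality — is the heart of the matter. Once it is in place, the second step yields clopenness of every non‑fixed block, and ergodicity makes the conclusion hold almost surely for all of them simultaneously.
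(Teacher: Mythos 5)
Your reduction in the first two steps is reasonable (the trichotomy via the child-counts $c_n$ is correct, and finitely many clopen blocks do follow from compactness), but the proof has a genuine gap exactly where you flag it: the containment $(\star)$ is never established, only described as ``the heart of the matter.'' Turning ``some $h\in H$ moves a child of $w$, on a positive-probability event'' into ``$H$ contains a derived rigid stabilizer below a vertex of $\xi$'' is not a routine 0--1-law upgrade: the commutator $[h,r]=h^{-1}h^{r}$ lies in $H\cdot H^{r}$, not in $H$, and the paper's mechanism for closing this gap (Proposition \ref{proposition:few_exceptions}) is a counting argument over conjugates in \emph{finite} groups that (a) requires the group to be finitary, or else yields containment only in the closure $\overline{H}$ (Theorem \ref{theorem:branch_group_closure}), and (b) is applied in the paper \emph{after} Theorem \ref{theorem:clopen}, using clopenness of the orbit-closures to extract the finite witnessing set $S$. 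So your route is at best circular relative to the paper's architecture, and for a general countable (non-finitary) regular branch group --- which is the setting of the statement --- the containment $\Rst'_\Gamma(v)\le H$ is not even among the paper's conclusions. A secondary unjustified step: you assert that $[\Rst_\Gamma(v),\Rst_\Gamma(v)]$ ``still has'' finitely many orbit-closures on $C_v$; derived subgroups of branch groups need not be level-transitive nor have boundedly many orbits per level (already $\Aut_f(T)'$ for $d=2$ fixes the first level pointwise), so this would need a separate argument.

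The paper's proof avoids all of this with a softer, purely measure-theoretic argument. To a non-clopen closed set $C$ it attaches a 3-coloring of $V(T)$ and a boundary point $x_0$ of $C$ lying on an all-green ray; it shows (Lemmas \ref{lemma:branch_atomless_coloring}, \ref{lemma:branch_zero_probability}) that the Haar-random translate of this coloring, restricted to two distinct subtrees at level $n$, gives two \emph{independent atomless} random colorings, so for any fixed $\eta\in\Gamma$ the probability that $\eta$ preserves the random coloring while moving $\widetilde{x}_0$ is $0$; countability of $\Gamma$ then gives Proposition \ref{proposition:fixedpointfree_stabilizer}: the setwise stabilizer of $\widetilde{C}$ almost surely fixes a point of $\widetilde{C}$. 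Finally, the transfer theorem couples $H$ inside the setwise stabilizer of a Haar-random translate of a non-singleton block $C$ of the orbit-closure partition; since $H$ acts minimally without fixed points on that block, $C$ must be clopen. If you want to salvage your approach, you would need to either prove $(\star)$ directly (essentially redoing Proposition \ref{proposition:few_exceptions} without assuming clopenness, and only in the finitary case), or switch to the paper's stabilizer-plus-atomless-coloring argument.
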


In a group $\Gamma$ the \emph{rigid stabilizer} of a vertex $v \in V(T)$ is the set $\Rst_{\Gamma}(v) \subseteq \Gamma$ of automorphisms that fix all vertices except the descendants of $v$. The rigid stabilizer of the level $\mathcal{L}_n$ is 

\[\Rst_{\Gamma}(\mathcal{L}_n) = \prod_{v \in \mathcal{L}_n} \Rst_{\Gamma}(v).\]

In \cite[Theorem 4]{grigorchuk2000just} Grigorchuk showed that nontrivial normal subgroups in branch groups contain the derived subgroup  $\Rst'_{\Gamma}\big(\mathcal{L}_m (T)\big)$ for some $m \in \N$. Our next theorem can be thought of as a generalization of this statement for finitary regular branch groups.

Using the decomposition of $T$ with respect to $C$ above we can define a \emph{generalized rigid level stabilizer} $L(C, m_i)$ by taking the direct sum of the rigid level stabilizers $\Rst_{\Gamma}\big(\mathcal{L}_{m_i}(T_i)\big)$ instead of the $\Stab_{\Gamma}\big(\mathcal{L}_{m_i}(T_i)\big)$ we used before. Throughout this paper $L(C, m_i)$ always stands for this generalized rigid level stabilizer, it just so happens that in the case of $\Alt_f(T_d)$ above the rigid stabilizers are in fact the stabilizers.  The next theorem generalizes Theorem \ref{theorem:alternating_finitary_ffp} and Theorem \ref{theorem:finitary_alternating} for \emph{finitary} regular branch groups.

\begin{theorem} \label{theorem:finitary}
Let $\Gamma$ be a finitary regular branch group, and let $H$ be a nontrivial ergodic IRS of $\Gamma$. Then  $\Fix(H)$ is the Haar-random translate of a closed subset $C$ with an element from $\overline{\Gamma}$. Also there exists $(m_i)$  such that $H$ almost surely contains the derived subgroup $L'\big(\Fix(H),(m_i)\big)$ of a generalized rigid level stabilizer. In particular if $H$ is fixed point free, then $H$ almost surely contains $\Rst'_{\Gamma}\big(\mathcal{L}_m(T)\big)$ for some $m \in \N$.
\end{theorem}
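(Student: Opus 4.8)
The plan is to first pin down the law of $\Fix(H)$ and then reduce everything to a fixed-point-free problem on each complementary subtree. The map $H \mapsto \Fix(H)$ is Borel and $\Gamma$-equivariant into the hyperspace of closed subsets of $\partial T$ (with the Vietoris topology), on which the compact group $\overline{\Gamma}$ acts continuously. Hence the law of $\Fix(H)$ is a $\Gamma$-invariant probability measure; since $\Gamma$ is dense in $\overline{\Gamma}$ and the action is continuous, it is $\overline{\Gamma}$-invariant, and ergodicity of $H$ makes it $\overline{\Gamma}$-ergodic. An ergodic invariant measure for a compact group action is the push-forward of Haar measure under a single orbit map, so $\Fix(H)$ is almost surely the Haar-random translate $g C$ of a fixed closed set $C$, which is the first assertion. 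Conditioning on a realization $D = \Fix(H)$ and decomposing $T$ with respect to $D$ as in Figure \ref{fig:intro_trees}, the action of $H$ has no boundary fixed points inside any complementary subtree $T_i$; since the derived generalized rigid level stabilizer splits as $L'\big(D,(m_i)\big) = \bigoplus_i \Rst'_\Gamma\big(\mathcal{L}_{m_i}(T_i)\big)$ over subtrees of pairwise disjoint support, it suffices to produce, for each $i$, an $m_i$ with $\Rst'_\Gamma\big(\mathcal{L}_{m_i}(T_i)\big) \subseteq H$ almost surely. By self-similarity of the regular branch group this is exactly a fixed-point-free instance of the problem on $T_i \cong T$, so I concentrate on the case $\Fix(H) = \emptyset$.

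For the fixed-point-free case the key device is the \emph{deterministic} subgroup
\[
K_0 = \{\, g \in \Gamma : \P(g \in H) = 1 \,\}.
\]
Because the IRS law is conjugation-invariant we have $\P(g \in H) = \P(\gamma g \gamma^{-1} \in H)$ for every $\gamma \in \Gamma$, so $K_0$ is not merely a subgroup but a \emph{normal} subgroup of $\Gamma$; and since $K_0$ is countable and each of its elements lies in $H$ almost surely, $K_0 \subseteq H$ almost surely. Thus the whole fixed-point-free statement follows once $K_0 \neq 1$: Grigorchuk's theorem \cite[Theorem 4]{grigorchuk2000just} then yields $\Rst'_\Gamma(\mathcal{L}_m) \subseteq K_0 \subseteq H$ almost surely for some $m$. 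This also clarifies the hypotheses: when $\Fix(H)$ is a genuinely random nontrivial translate, no deterministic $g \neq 1$ can fix it, so $K_0 = 1$, and one really must first split off the random fixed-point set as above before any deterministic subgroup can appear.

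The main obstacle is therefore to show $K_0 \neq 1$, i.e.\ to exhibit a single honest nontrivial automorphism lying in $H$ with probability one. Here I would combine Theorem \ref{theorem:clopen} with the branch structure and finitariness. By Theorem \ref{theorem:clopen} the fixed-point-free $H$ has, almost surely, finitely many clopen orbit-closures, each a union of level-$m$ cylinders on which $H$ acts minimally; minimality forces $H$ to act transitively on the finitely many cylinders making up each orbit-closure (a proper subunion would be a proper clopen invariant set), providing a rich supply of elements of $H$ permuting subtrees. The difficulty is that the natural way to localise an element into a rigid stabilizer is via a commutator $[h,r]$ with $r \in \Rst_\Gamma(v)$, but such $r$ need not lie in $H$, and the IRS is invariant only in distribution, not genuinely normalised. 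I would get around this by a descent on sections: for a vertex $v$ the section of $H \cap \Rst_\Gamma(v)$ inside $\Aut(T_v) \cong \Aut(T)$ is invariant under the section of $\Stab_\Gamma(v)$, which for a regular branch group contains the finite-index branching subgroup, so it is itself an IRS of a finite-index subgroup of $\Gamma$ on $T_v$; finitariness bounds the depth of supports and forces this recursion to terminate in a genuinely localised nontrivial element, which can then be transported by the transitive action and symmetrised into an element of $K_0$. Verifying that this descent produces a \emph{deterministic} element (rather than merely a positive-probability one), using the $0$–$1$ nature of membership in $K_0$ together with ergodicity and minimality, is the crux of the argument.

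Finally I would assemble the pieces. Applying the fixed-point-free conclusion inside each complementary subtree $T_i$ yields integers $m_i$, chosen $\overline{\Gamma}$-equivariantly and measurably in the realization of $\Fix(H)$, with $\Rst'_\Gamma\big(\mathcal{L}_{m_i}(T_i)\big) \subseteq H$ almost surely; taking the direct sum over $i$ and using disjointness of supports gives $L'\big(\Fix(H),(m_i)\big) \subseteq H$ almost surely, the general statement. The fixed-point-free special case $\Fix(H) = \emptyset$ collapses the decomposition to the single tree $T_0 = T$ and recovers $\Rst'_\Gamma\big(\mathcal{L}_m(T)\big) \subseteq H$. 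Throughout one must track measurability and the passage between the ergodic IRS of $\Gamma$ and the a priori only invariant conditional laws on the subtrees, which I would handle through an ergodic decomposition compatible with the transitive $\overline{\Gamma}$-action on the orbit of $C$.
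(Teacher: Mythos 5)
Your first step --- that the law of $\Fix(H)$ is the Haar-random translate of a fixed closed set --- matches the paper's Lemmas \ref{lemma:closure_invariant} and \ref{lemma:unique} and is fine. The rest has two genuine problems. First, the reduction of the general statement to the fixed-point-free case on each complementary subtree $T_i$ is precisely the route the authors explicitly say they do not see how to carry out (see the Remark after Proposition \ref{proposition:action_on_T_i}). The obstruction is concrete: after conditioning on the realization of the subtree, what you actually get is that the \emph{restriction} $\varphi_n(H)=\{h\vert_{\widetilde{T}_n} : h\in H\}$ is a fixed-point-free IRS of $\Stab_{\Gamma}(T^i_n)$; but containment of a rigid level stabilizer in this projection does not give containment in $H$ itself, because the actions of $H$ on different subtrees can be nontrivially coupled --- Example \ref{example:coupling} is built exactly to exhibit this. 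The paper avoids the reduction entirely and instead treats all clopen orbit-closures at once, using the double-commutator trick over the three ``discard sets'' $D$, $E$, $F$ in Proposition \ref{proposition:few_exceptions} to decouple the coordinates.

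Second, and more seriously, your fixed-point-free argument reduces everything to showing that $K_0=\{g\in\Gamma : \P(g\in H)=1\}$ is nontrivial, and then leaves that point --- which you correctly identify as the crux --- unproved. The observation that $K_0$ is a deterministic normal subgroup contained in $H$ almost surely, so that Grigorchuk's theorem would finish, is a clean repackaging of the conclusion, but establishing $K_0\neq 1$ \emph{is} the theorem: a priori an ergodic IRS only hands you group elements lying in $H$ with some positive probability $p$, and upgrading ``positive probability'' to ``probability one'' for a single honest element is where all the work lies. The sketched descent on sections supplies no mechanism for this upgrade, and the intermediate objects (e.g.\ $H\cap\Rst_{\Gamma}(v)$ viewed inside $\Aut(T_v)$) are not obviously IRS's of anything to which an induction could apply. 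The paper's actual mechanism is quite different: it extracts a finite subgroup $S$ with $\P(S\subseteq H)=p>0$ acting with long cycles, passes to ergodic components of $H\cap\Gamma_n$, and proves by a counting argument (Proposition \ref{proposition:few_exceptions}, via Lemma \ref{lemma:index_bound} and the bounded-index normal subgroup $N$) that any subgroup of $\Gamma_n$ containing a positive proportion of the conjugates of $S$ must already contain $\Rst''_{\Gamma_n}$ of a level of depth independent of $n$; only at the very end does ergodicity convert the positive-probability event $\Rst''_{\Gamma}(V_m)\subseteq H$ into an almost-sure one. Without an argument of this kind the proposal does not close.
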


Already in the case of $\Aut_f(T)$ with $d=2$ the abelianization of $\Aut_f(T)$ equals $(\Z / 2\Z)^{\N}$. This itself gives rise to a lot of IRS's, which makes the following consequence of Theorem \ref{theorem:finitary} somewhat surprising. 

\begin{theorem}\label{theorem:atomic}
All ergodic fixed point free IRS's in finitary regular branch groups are supported on finitely many subgroups, and therefore are the uniform random conjugates of a subgroup with finite index normalizer.
\end{theorem}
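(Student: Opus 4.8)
The plan is to deduce this from Theorem \ref{theorem:finitary} by a soft counting argument inside a virtually abelian quotient, combined with the standard dichotomy for ergodic measures. Let $\Gamma$ be a finitary regular branch group and let $H$ be a nontrivial fixed point free ergodic IRS. By the last assertion of Theorem \ref{theorem:finitary} there is a \emph{deterministic} $m \in \N$ with $H \supseteq N := \Rst'_\Gamma\big(\mathcal{L}_m(T)\big)$ almost surely. Since $N$ is the derived subgroup of $\Rst_\Gamma(\mathcal{L}_m)$, it is characteristic in the normal subgroup $\Rst_\Gamma(\mathcal{L}_m) \trianglelefteq \Gamma$, hence $N \trianglelefteq \Gamma$. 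Put $\Lambda := \Gamma/N$. As $\Gamma$ is a branch group, $\Rst_\Gamma(\mathcal{L}_m)$ has finite index in $\Gamma$, so $A := \Rst_\Gamma(\mathcal{L}_m)/N$ is a finite index normal subgroup of $\Lambda$; moreover $A$ is abelian, being the abelianization of $\Rst_\Gamma(\mathcal{L}_m)$. Thus $\Lambda$ is a \emph{countable, virtually abelian} group, with $F := \Lambda/A \cong \Gamma/\Rst_\Gamma(\mathcal{L}_m)$ finite. Because $H$ always contains $N$, the set $\{L \le \Gamma : L \supseteq N\}$ is a closed $\Gamma$-invariant subset of $\Sub_\Gamma$ on which the conjugation action factors through $\Lambda$; pushing $H$ forward along $\Gamma \to \Lambda$ therefore yields an ergodic IRS $K := H/N$ of $\Lambda$, and it suffices to show that $K$ is supported on finitely many subgroups.

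I would first record the general dichotomy for ergodic measures: an ergodic IRS is either atomless, or it is supported on a single finite conjugacy orbit and is uniform on it. Indeed, if some subgroup carries positive mass $c$, then by invariance so does each of its conjugates, so there are at most $1/c$ of them, and ergodicity concentrates all the mass on this one finite orbit. Consequently it is enough to prove that $K$ is supported on a \emph{countable} set of subgroups, as this forces an atom and hence the second alternative. The strategy for countability is to show that every subgroup occurring in $K$ is pinned down by a finite amount of data. Consider the $\Lambda$-equivariant Borel map $\Sub_\Lambda \to \Sub_A$, $L \mapsto L \cap A$, which carries $K$ to an ergodic IRS of $\Lambda$ valued in $\Sub_A$. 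Since $A$ is abelian and normal, the conjugation action of $\Lambda$ on $\Sub_A$ factors through the finite group $F$, so every orbit is finite; but an ergodic measure all of whose orbits are finite must live on a single (finite) orbit. Hence $K \cap A$ takes only finitely many values $A_1, \ldots, A_r$ almost surely.

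Fixing one value $A_j$ and conditioning on $K \cap A = A_j$, we get $K/(K\cap A) \cong KA/A \le F$, so $[K : A_j] \le |F|$, i.e.\ $K/A_j$ is a \emph{finite} subgroup of the \emph{countable} group $\Lambda/A_j$. A countable group has only countably many finite subgroups, so for each $j$ there are only countably many subgroups $K$ with $K \cap A = A_j$; ranging over the finitely many $j$, the subgroup $K$ lies almost surely in a fixed countable set. By the dichotomy, $K$—and therefore $H$—is supported on finitely many subgroups and is the uniform measure on the $\Gamma$-conjugacy orbit of some $H_0$; finiteness of this orbit means exactly that $N_\Gamma(H_0) = \Stab_\Gamma(H_0)$ has finite index, which is the claimed conclusion. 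I expect the only genuinely load-bearing input to be the \emph{deterministic} $m$ produced by Theorem \ref{theorem:finitary}: everything downstream is formal, but it depends essentially on $H$ swallowing one fixed normal $N$ with $\Gamma/N$ countable and virtually abelian, rather than merely an asymptotic family of such subgroups. The finitary hypothesis enters precisely here, through Theorem \ref{theorem:finitary}, and this is where the real difficulty of the statement resides; by contrast the two finiteness steps above (finitely many values of $K\cap A$, and the uniform bound $[K:K\cap A]\le|F|$) are the mild technical points I would take care over.
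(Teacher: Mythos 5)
Your argument is correct and follows essentially the same route as the paper: reduce modulo the deterministic normal subgroup $\Rst'_\Gamma(\mathcal{L}_m)$ supplied by Theorem \ref{theorem:finitary} to an (abelian)-by-(finite) countable quotient, observe that the intersection with the abelian part takes only finitely many values, count the countably many possible extensions, and invoke ergodicity to force finite support. The only cosmetic slip is the phrase ``the countable group $\Lambda/A_j$'' (the subgroup $A_j$ need not be normal in $\Lambda$), but your counting only uses that $K$ is generated by $A_j$ together with at most $|F|$ elements of the countable group $\Lambda$, so the conclusion stands.
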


One can think of Theorem \ref{theorem:atomic} as a dual of Theorem \ref{theorem:continuum_many}. Also note that merely containing $\Rst'_{\Gamma}\big(\mathcal{L}_m(T)\big)$ does not imply finite index normalizer.

Note that the assumption of finitary elements in these results is quite restrictive, but our approach makes it a necessary technical condition. In the Grigorchuk group $\mathfrak{G}$ the elements are not finitary. In this case our methods yield a weaker result on the closures of IRS's.

\begin{theorem} \label{theorem:branch_group_closure}
Let $\Gamma$ be a countable regular branch group, and let $H$ be a nontrivial ergodic $IRS$ of $\Gamma$. Then there exists $(m_i)$ such that $\overline{H}$ contains the derived subgroup $L'\big(\Fix(H), (m_i) \big)$ of a generalized rigid level stabilizer almost surely, where the elements of the rigid stabilizers in $L\big(\Fix(H), (m_i) \big)$ can be chosen from $\overline{\Gamma}$ instead of $\Gamma$.
\end{theorem}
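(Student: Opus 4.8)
The plan is to run the same scheme used to prove Theorem~\ref{theorem:finitary}, but to replace every step that relies on finiteness of supports by a limiting argument inside the profinite group $\overline{\Gamma} \leq \Aut(T)$, so that the elements we produce land in $\overline{H}$ and in $\Rst_{\overline{\Gamma}}$ rather than in $H$ and $\Rst_{\Gamma}$. First I would pin down the fixed-point set. By Theorem~\ref{theorem:clopen} every orbit-closure of $H$ on $\partial T$ that is not a fixed point is clopen, so the complement of $\Fix(H)$ is a finite union of clopen orbit-closures. Conjugation invariance makes the distribution of $\Fix(H)$ a $\Gamma$-invariant random closed set, and combining the clopen structure with ergodicity identifies $\Fix(H)$ as the Haar-random translate, by an element of $\overline{\Gamma}$, of a fixed closed set $C$. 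I then condition on $\Fix(H)=C$ and pass to the decomposition of $T$ into the skeleton $T_C$ and the complementary subtrees $T_0, T_1, \dots$ of Figure~\ref{fig:intro_trees}; on each $T_i$ the induced random subgroup acts without fixed points and, again by Theorem~\ref{theorem:clopen}, with finitely many clopen orbit-closures.

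The core of the argument is local to a single $T_i$. Starting from the positive-probability event that $H$ contains an element acting nontrivially on $T_i$, I would use the regular branch structure --- each $\Rst_{\Gamma}(v)$ contains a copy of the branching subgroup $K$, and rigid stabilizers of incomparable vertices commute --- together with ergodicity and the minimality supplied by the clopen orbit-closure to place nontrivial rigid elements below a suitable level $\mathcal{L}_{m_i}(T_i)$. The derived rigid level stabilizer is the right target because $\Rst_{\Gamma}\big(\mathcal{L}_{m_i}(T_i)\big) = \prod_{v} \Rst_{\Gamma}(v)$ is an internal direct product, so its derived subgroup is $\prod_{v} \Rst'_{\Gamma}(v)$; commutators of two rigid elements of $H$ supported below the same $v$ again lie in $H$ and in $\Rst'(v)$, and ranging over enough such pairs generates these factors. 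The commutators are essential here: they cancel the parts of the IRS elements acting outside $T_v$, which is exactly what lets non-rigid elements contribute rigid ones.

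The main obstacle --- and the reason the conclusion only controls $\overline{H}$ --- is that for a non-finitary $\Gamma$, such as the Grigorchuk group, an element $h \in H$ has infinite support, so no finite manipulation can place a genuine element of $\Rst_{\Gamma}(v)$ inside $H$: a finite sequence of commutators only matches a target rigid element on finitely many levels of $T_v$. I would therefore, for each depth $n$, produce an element of $H$ agreeing with the desired element of $\Rst_{\overline{\Gamma}}(v)$ on the first $n$ levels and take the limit in the profinite topology; the limit lies in $\overline{H}$ and equals the target element of $\Rst_{\overline{\Gamma}}(v)$. This is precisely why the rigid stabilizers in the conclusion must be taken in $\overline{\Gamma}$.

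The delicate remaining point is to carry out these approximations coherently for all $v \in \mathcal{L}_{m_i}(T_i)$, all subtrees $T_i$, and almost every realization of the Haar-random translate simultaneously. I expect to handle this by a compactness argument in $\Sub_{\overline{\Gamma}}$: for each $n$ the event that $\overline{H}$ contains the depth-$n$ truncation of $\Rst'_{\overline{\Gamma}}(v)$ is measurable and, by the production step together with ergodicity, holds almost surely; intersecting over $n$ and over the countably many relevant vertices yields $\overline{H} \supseteq L'\big(\Fix(H),(m_i)\big)$ almost surely, with all rigid stabilizers taken in $\overline{\Gamma}$ as required.
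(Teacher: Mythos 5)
Your overall architecture agrees with the paper's: the paper also works at the level of the finite quotients (it treats $\pi_n(H)$ as an ergodic IRS of $G_n=\pi_n(\Gamma)$, shows that a fixed double derived rigid stabilizer lands in $\pi_n(H)$ for every large $n$, and recovers containment in $\overline{H}$ by exactly the profinite limiting argument you describe; the final passage from $\overline{\Rst_\Gamma(V_{m^*})}'$ to $\Rst'_{\overline{\Gamma}}(V_{m^*})$ uses that $\Rst_{\overline{\Gamma}}(\mathcal{L}_{m^*})$ has finite index, hence is open). The gap is in what you call the ``core of the argument.'' You write that ``commutators of two rigid elements of $H$ supported below the same $v$ again lie in $H$ and in $\Rst'(v)$, and ranging over enough such pairs generates these factors.'' This is circular: the entire difficulty is to show that $H$ (or its finite quotient) contains any rigid elements at all, and nothing in your sketch produces them. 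The engine you would need is the analogue of Proposition \ref{proposition:few_exceptions}: fix a finite subgroup $S$ with $\mathbb{P}[S\subseteq H]=p>0$ whose elements have cycles of length at least $3$ covering the relevant orbit (Lemma \ref{lemma:technical_assumption}; the length-$3$ condition is essential and absent from your sketch); since $\pi_n(H)$ is the uniform conjugate of a fixed $L_n\le G_n$, a proportion $\ge p$ of the conjugates $\bar s^{\sigma}$ with $\sigma\in\Rst_{G_n}(\mathcal{L}_k)$ lie in $L_n$; the section identity $[\bar s^{\sigma}]_{u^i_j}=\sigma_{u^i_j}\sigma_{u^i_{j+1}}^{-1}$ shows that the products $\bar s^{\sigma}(\bar s^{\eta})^{-1}$ project onto a positive proportion of $\Rst_{G_n}(D)$ for each set $D$ obtained by deleting one point from every cycle; a bounded-index argument together with Grigorchuk's commutator trick then forces $\Rst'_{G_n}$ of a shadow into these projections, and intersecting three choices of deleted points yields $\Rst''_{G_n}\big(\Sh_{\mathcal{L}_m}(C)\big)\subseteq L_n$ with $m$ independent of $n$. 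Without this step (or a substitute for it) your ``production step'' is an unproven assertion, and the compactness and intersection argument of your last paragraph has nothing to intersect.

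Two smaller inaccuracies: the complement of $\Fix(H)$ is in general a countable, not finite, union of clopen orbit-closures (only each $\partial \widetilde{T}_i$ is a finite union), and the target one can reach by this method is the \emph{double} derived subgroup $\Rst''$ at level $m$, from which one passes to $\Rst'$ at a deeper level $m^*$ using \cite[Lemma 5.3]{bartholdi2003branch}; your sketch aims directly at $\Rst'$ without accounting for this extra step.
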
 

However, classifying IRS's of the discrete Grigorchuk group $\mathfrak{G}$ is still open.

\begin{problem} \label{problem:grigorchuk_group}
What are the (fixed point free) ergodic IRS's of the first Grigorchuk group $\mathfrak{G}$? Is it true, that a fixed point free ergodic IRS of $\mathfrak{G}$ contains a congruence subgroup almost surely?
\end{problem}

\begin{remark}
After the current paper was made available on arXiv the main results were extended to countable, non-finitary branch groups by Zheng in \cite{zheng2019rigid}. Her work settled Problem \ref{problem:grigorchuk_group} as well. Without claiming precedence we mention that the case of $\mathfrak{G}$ can also be settled using our Theorem \ref{theorem:branch_group_closure} and the work of Pervova in \cite{pervova2000everywhere}. Pervova proves that $\mathfrak{G}$ has no proper subgroup that is dense in $\overline{\mathfrak{G}}$. Using this one can show that if an $H \leq \mathfrak{G}$ is dense in some rigid stabilizer, then $H$ actually contains it. Note that this argument is specific to $\mathfrak{G}$, while Zheng's work is general. We thank Rostislav Grigorchuk for bringing Pervova's result to our attention.
\end{remark}

The structure of the paper is as follows. We introduce the basic notions of the paper in Section \ref{section:preliminaries} and state some lemmas leading towards Theorem \ref{theorem:clopen}. In Section \ref{section:orbit_closures} we investigate the actions of IRS's on the boundary and prove Theorems \ref{theorem:continuum_many} and \ref{theorem:clopen}. Section \ref{section:main_theorem} is dedicated to understanding the structure of IRS's in finitary regular branch groups and proving Theorem \ref{theorem:finitary}. We show how Theorems \ref{theorem:atomic} and \ref{theorem:branch_group_closure} follow from our earlier results in Section \ref{section:corollaries}. In the Appendix we prove a few technical details that we postpone during the exposition.

\medskip

\begin{acknowledgement}
The authors would like to thank Mikl\'os Ab\'ert for introducing them to some of these questions and for helpful remarks along the way. We would also like to thank Tatiana Smirnova-Nagnibeda and Rostislav Grigorchuk for helpful remarks on a previous version of this paper. We are thankful to the anonymous referee for all the constructive remarks and suggestions for improving this paper.
\end{acknowledgement}

\section{Preliminaries} \label{section:preliminaries}

In this section we introduce the basic notions discussed in the paper. Notation mostly follows \cite{bartholdi2003branch}, which we recommend as an introduction to automorphisms of rooted trees and branch groups.

\subsection{Automorphisms of rooted trees}

\label{subsec:aut_of_rooted_trees}

Let $T$ be a locally finite tree rooted at $o$, and let $d_T$ denote the graph distance on $T$. For any vertex $v$ the \emph{parent} of $v$ is the unique neighbor $u$ of $v$ with $d_T(u,o)=d_T(v,o)-1$. Accordingly, the \emph{children} of $u$ are all the neighbors $v$ of $u$ with $d_T(v,o) = d_T(u,o)+1$. Similarly we use the phrases \emph{ancestors} and \emph{descendants} of a vertex $v$ to refer to vertices that can be reached from $v$ by taking some number of steps towards or away from the root respectively. The $n^{\mathrm{th}}$ level of $T$ is the set of vertices $\mathcal{L}_n = \{v \in V(T) \mid d_T(v,o)=n\}$. 

To effectively talk about automorphisms of a rooted tree $T$ one has to distinguish the vertices. For any vertex $v$ we fix an ordering of the children of $v$. In the case of the $d$-ary tree this corresponds to thinking of $T$ as the set of finite length words $Y^*$ over the alphabet $Y$ with $d$ letters. The empty word represents the root, and the parent of any word $w_1 w_2\ldots w_n$ is $w_1 w_2 \ldots w_{n-1}$. Being an ancestor of $v$ corresponds to being a prefix of the word corresponding to $v$.

An automorphism of $T$ (which preserves the root) corresponds to a permutation of the words which preserves the prefix relation. For an element $\gamma \in \Aut(T)$ and a word $w \in Y^*$ we denote by $w^{\gamma}$ the image of $w$ under $\gamma$. For a letter $y \in Y$ we have $(wy)^{\gamma} = w^{\gamma}y'$ where $y'$ is a uniquely determined letter in $Y$. The map $y \mapsto y'$ is a permutation of $Y$, we refer to it as the \emph{vertex permutation} of $\gamma$ at $w$ and denote it $(w)\gamma$. 

Considering all the vertex permutations $\big((w)\gamma\big)_{w \in Y^*}$ gives us the \emph{portrait} of $\gamma$, which is a decoration of the vertices of $T$ with elements from the symmetric group $S_d$. In turn any assignment of these vertex permutations -- that is, every possible portrait -- gives an automorphism of $T$. Note that one has to perform these vertex permutations ``from bottom to top''.

An automorphism $\gamma$ is finitary, if it has finitely many nontrivial vertex permutations. It is alternating, if all are from the alternating group $A_d$.

Let $S_d^{\textrm{wr}(n)}$ denote the $n$-times iterated permutational wreath product of the symmetric group $S_d$. That is,  let $[d]=\{1, \ldots, d\}$ and set 
\[S_d^{\textrm{wr}(n)} = \underbrace{( ( S_d \wr_{[d]}\ldots)\wr_{[d]}S_d) \wr_{[d]} S_d}_{n}.\]

Then $S_d^{\textrm{wr}(n)}$ is isomorphic to the automorphism group of the $d$-ary rooted tree of depth $n$. These groups can be embedded in $\Aut(T)$ as acting on the first $n$ levels. The group $\Aut_f$ is the union of these embedded finite groups. The full automorphism group $\Aut(T)$ however is isomorphic to the projective limit $\varprojlim S_d^{\textrm{wr}(n)}$ with the projections being the natural restrictions of the permutations. 

The groups $\Alt_f(T)$ and $\Alt (T)$ are in a similar relationship with the finite groups $A_d^{\textrm{wr}(n)}$.

\subsection{The boundary of $T$}

The \emph{boundary} of $T$ is the set of infinite simple paths starting from $o$, and is denoted $\partial T$. For two distinct paths $p_1=(u_0, u_1, \ldots)$ and $p_2=(v_0, v_1, \ldots)$ with $u_n, v_n \in \mathcal{L}_n$ their distance is defined to be 
\[d_{\partial T}(p_1, p_2) = \frac{1}{2^k}, \textrm{ where } k = \max\{n \mid u_n=v_n\}.\]
Two infinite paths are close it they have a long common initial segment. This distance turns $\partial T$ into a compact, totally disconnected metric space. 

The \emph{shadow} of $v$ on $\partial T$, denoted by $\Sh(v)$ is the set of paths passing through $v$. Similarly the shadow of $v$ on $\mathcal{L}_n$ is the set $\Sh_{\mathcal{L}_n}(v)$ of descendants of $v$ in $\mathcal{L}_n$. The sets $\Sh(v)$ form a basis for the topology of $\partial T$. Define the probability measure $\mu_{\partial T}$ by setting its value on this basis: \[\mu_{\partial T} \big(\Sh(v)\big) = \frac{1}{d^n} \textrm{ for every } v \in \mathcal{L}_n.\]

A $\mu_{\partial T}$-random point of $\partial T$ is a random infinite word $(w_1w_2\ldots)$ with each letter chosen uniformly from the set $Y$. 

As $\gamma \in \Aut(T)$ permutes the vertices, it induces a bijection on $\partial T$, so we have an action of $\Aut(T)$ on $\partial T$. This action is by isometries and preserves the measure $\mu_{\partial T}$.  

The objects in relation of the tree considered in this paper include vertices $v \in V(T)$, points $x \in \partial T$, closed subsets $C \subseteq \partial T$ and later 3-colorings of the vertices $\varphi: V(T) \to \{r,g,b\}$. For any such object $z$ let $z^{\gamma}$ denote its translate by $\gamma$.

\subsection{Topology on $\Aut(T)$}

We equip $\Aut(T)$ with the topology of pointwise convergence. This can be metrized by the following distance:

\[d_{\Aut(T)} ( \gamma_1, \gamma_2) = \frac{1}{2^k}, \textrm{ where } k=\max\{n \mid \gamma_1\vert_{\mathcal{L}_n} = \gamma_2\vert_{\mathcal{L}_n}\}.\]
Two automorphisms are close if they act the same way on a deep level of $T$. This metric turns $\Aut(T)$ into a compact, totally disconnected group. 

The action $\Aut(T) \acts \partial T$ is continuous in the first coordinate as well. For any subgroup $H \leq \Aut(T)$ the set $\Fix(H)$ is closed in $\partial T$, and similarly for any set $C \subseteq \partial T$ its pointwise stabilizer $\Stab_{\Aut(T)} (C)$ is closed in $\Aut(T)$.

For a subgroup $\Gamma \leq \Aut(T)$ its closure $\overline{\Gamma}$ is a closed subgroup of $\Aut(T)$, and therefore it is compact. We note that $\overline{\Aut_f (T)} = \Aut (T)$ and $\overline{\Alt_f (T)} = \Alt (T)$. Even though the groups $\Gamma$ we are considering are discrete, their closures in $\Aut(T)$ always carry a unique Haar probability measure. 

For any object $z$ in relation to the tree we write $\widetilde{z}$ for its Haar random translate, that is $z^{\gamma}$ where $\gamma \in \overline{\Gamma}$ is chosen randomly according to the Haar measure.

\subsection{Fixed points and orbit-closures in $\partial T$}

We aim to understand the IRS's of $\Gamma$ through their actions on $\partial T$. The first step is to look at the set of fixed points. The boundary $(\partial T, d_{\partial T} )$ is a compact metric space, so let $(\mathcal{C}, d_H)$ denote the compact space of closed subsets of $\partial T$ with the Hausdorff metric.

\begin{lemma} \label{lemma:measurable_fixedpoint_set}
The map $H \mapsto \Fix(H)$ is a measurable and $\Gamma$-equivariant map from $\Sub_{\Gamma}$ to $(\mathcal{C}, d_H)$.
\end{lemma}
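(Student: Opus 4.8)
The plan is to treat equivariance and measurability separately: the former is a formal computation, while the latter carries the real content.

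For equivariance, I would simply unwind the definitions using the right-action convention $x^{\gamma_1\gamma_2}=(x^{\gamma_1})^{\gamma_2}$ and writing the conjugation action as $H^{\gamma}=\gamma^{-1}H\gamma$. A point $x$ lies in $\Fix(H^{\gamma})$ iff $x^{\gamma^{-1}h\gamma}=x$ for every $h\in H$; rearranging this to $(x^{\gamma^{-1}})^{h}=x^{\gamma^{-1}}$ shows it is equivalent to $x^{\gamma^{-1}}\in\Fix(H)$, i.e.\ $x\in\Fix(H)^{\gamma}$. Hence $\Fix(H^{\gamma})=\Fix(H)^{\gamma}$, which is exactly the asserted $\Gamma$-equivariance.

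For measurability the first step is to pin down a convenient generating family for the Borel $\sigma$-algebra of $(\mathcal{C},d_H)$. Since the shadows $\Sh(v)$ form a countable clopen basis of $\partial T$, each nonempty closed $C$ is recovered from the pruned subtree $T_C=\{v\in V(T):\Sh(v)\cap C\neq\emptyset\}$, and the assignment $C\mapsto\big(\mathbf{1}[\Sh(v)\cap C\neq\emptyset]\big)_{v}$ is a homeomorphism of $\mathcal{C}\setminus\{\emptyset\}$ onto the closed set of pruned subtrees in $\{0,1\}^{V(T)}$ — two closed sets are close in $d_H$ precisely when their subtrees agree up to a deep level. Consequently the Borel structure on $\mathcal{C}$ is generated by the events $A_v=\{C:\Sh(v)\cap C\neq\emptyset\}$ (together with the measurable singleton $\{\emptyset\}$), so it suffices to show that for each fixed $v$ the map $H\mapsto\mathbf{1}[\Sh(v)\cap\Fix(H)\neq\emptyset]$ is measurable on $\Sub_{\Gamma}$.

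The key step is then a compactness argument. Writing $\Fix(H)=\bigcap_{h\in H}\Fix(h)$, the set $\Sh(v)\cap\Fix(H)$ is an intersection of closed subsets of the compact set $\Sh(v)$, so by the finite intersection property it is nonempty iff $\Sh(v)\cap\bigcap_{h\in F}\Fix(h)\neq\emptyset$ for every finite $F\subseteq H$. This lets me write
\[
\{H:\Sh(v)\cap\Fix(H)\neq\emptyset\}=\bigcap_{F}\Big(\{H:F\not\subseteq H\}\cup B_{v,F}\Big),
\]
where $F$ ranges over the countably many finite subsets of $\Gamma$ and $B_{v,F}=\{H:\Sh(v)\cap\bigcap_{h\in F}\Fix(h)\neq\emptyset\}$. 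For fixed $v$ and $F$ the condition defining $B_{v,F}$ does not depend on $H$, so $B_{v,F}$ is either all of $\Sub_{\Gamma}$ or empty; and $\{H:F\subseteq H\}$ is clopen, being determined by finitely many coordinates of the embedding $\Sub_{\Gamma}\hookrightarrow\{0,1\}^{\Gamma}$. Thus the displayed set is a countable intersection of clopen sets, hence Borel.

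The main obstacle is exactly the passage resolved in the last paragraph: a priori the event $\Sh(v)\cap\Fix(H)\neq\emptyset$ couples an uncountable existential quantifier over candidate fixed points in $\Sh(v)$ with a universal quantifier over $h\in H$, and is not visibly Borel; compactness of $\partial T$ is what collapses it to a countable condition. I would also take minor care with the degenerate cases — the empty fixed-point set and the convention for the Hausdorff distance to $\emptyset$ — but these affect only a single measurable point of $\mathcal{C}$ and do not interfere with the argument above.
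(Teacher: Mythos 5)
Your proof is correct, and at the level of strategy it follows the same route as the paper: both arguments discretize $(\mathcal{C},d_H)$ through the finite levels of $T$ (the paper via the preimages of balls $\{C'\mid C'_n=C_n\}$, you via the generating events $A_v=\{C\mid \Sh(v)\cap C\neq\emptyset\}$, which carry the same information), and both certify the resulting conditions by countably many finite subsets of the countable group $\Gamma$. The genuine difference is in how the two directions of the condition ``$\Sh(v)$ meets $\Fix(H)$ or not'' are handled. The direction $\Sh(v)\cap\Fix(H)=\emptyset$ is witnessed in both proofs by a finite subset of $H$ with no common fixed point over $v$. For the direction $\Sh(v)\cap\Fix(H)\neq\emptyset$, the paper only forbids single group elements that fail to fix the relevant vertices of $\mathcal{L}_n$, whereas the correct countable reformulation is exactly the one you extract from the finite intersection property in the compact set $\Sh(v)$: for \emph{every} finite $F\subseteq H$ the closed sets $\Fix(h)$, $h\in F$, must meet $\Sh(v)$ simultaneously. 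Since a subgroup can fix a vertex $v$ of $\mathcal{L}_n$ (hence contain no single ``forbidden'' element for $v$) while having no fixed boundary point in $\Sh(v)$, your explicit compactness step is not a stylistic variant but the precise form of the argument that the paper's witness/forbidden-element bookkeeping compresses. The equivariance computation and the remark about the isolated point $\emptyset\in\mathcal{C}$ are routine and correct.
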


Equivariance is trivial, while the proof of the measurability is a standard argument. We postpone it to the Appendix. 

Lemma \ref{lemma:measurable_fixedpoint_set} implies that the fixed points of the IRS constitute a $\Gamma$-invariant random closed subset of $\partial T$. 
We will also consider the orbit-closures of the subgroup on $\partial T$. For a subgroup $H \leq \Aut(T)$ let $\mathcal{O}_H$ denote the set of orbit-closures of the action $H \acts \partial T$. It is easy to see that $\mathcal{O}_{H}$ is a partition of $\partial T$ into closed subsets. Note that all fixed points are orbit-closures. Denote by $\mathcal{O}$ the space of all possible orbit-closure partitions on $\partial T$, i.e. $\mathcal{O}=\{\mathcal{O}_H~|~H\le \Aut(T)\}$. This $\mathcal{O}$ is a subset of all the possible partitions of $\partial T$.

As earlier, we would like to argue that the map $H\mapsto \mathcal{O}_H$ is a measurable map, with respect to the appropriate measurable structure on $\mathcal{O}$. This allows us to associate to our IRS a $\Gamma$-invariant random partition (into closed subsets) of $\partial T$. We will then analyze these invariant random objects on the boundary.  

To this end we introduce a metric on the space $\mathcal{O}$. Denote by $\mathcal{O}_{H,\mathcal{L}_n}$ the partition of $\mathcal{L}_n$ into $H$-orbits. As $\mathcal{L}_n$ is finite, there is no need to take closure here.  

\begin{definition*}
 Let $P=\mathcal{O}_H\in\mathcal{O}$ be the orbit-closure partition of $H$ and $n\in \mathbb{N}$. Then let $P_n$ be the orbit-structure of $H$ on $\mathcal{L}_n$, i.e. \[P_n=\mathcal{O}_{H,\mathcal{L}_n}.\]
 
 For $P \neq Q\in\mathcal{O}$ let 
 \[
  d_{\mathcal{O}}(P,Q)=\min_{n\in \mathbb{N}}\left\{\frac{1}{2^n}~\big|~P_n=Q_n\right\}.
 \]
\end{definition*}
Observe that if $P_n=Q_n$, then $P_{n-1}=Q_{n-1}$, so the above distance measures how deep one has to go in the tree to see that two partitions are distinct. This definition turns $(\mathcal{O},d_{\mathcal{O}})$ into a metric space. To check that distinct points cannot have zero distance we argue that if $x=(v_0,v_1,\ldots)$ and $y=(u_0, u_1, \ldots)$ are two rays such that $v_n$ and $u_n$ are in the same orbit in $\mathcal{L}_n$ for all $n$, then $y$ is indeed in the closure of the orbit of $x$.  

The group $\Aut(T)$ acts on $\mathcal{O}$ in a natural way by shifting the sets of the partition. The resulting partition is again in $\mathcal{O}$ because $(\mathcal{O}_H)^{\gamma} = \mathcal{O}_{H^{\gamma}}$ for $\gamma \in \Aut(T)$.

\begin{lemma} \label{lemma:measurable_orbit_closures}
 The map $H\mapsto \mathcal{O}_H$ is measurable and $\Gamma$-equivariant.
\end{lemma}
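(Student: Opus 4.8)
The plan is to reduce the measurability of $H \mapsto \mathcal{O}_H$ to the measurability of the finite-level data $H \mapsto P_n = \mathcal{O}_{H,\mathcal{L}_n}$, and then to handle the latter by writing the $H$-orbit relation on each level as a countable union of coordinate sets in $\Sub_\Gamma$. Equivariance is immediate: conjugation by $\gamma$ carries $H$-orbits on $\partial T$ to $H^\gamma$-orbits and commutes with taking closures, so $(\mathcal{O}_H)^\gamma = \mathcal{O}_{H^\gamma}$, which is exactly the identity recorded just before the lemma.

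First I would pin down the Borel structure of $(\mathcal{O}, d_{\mathcal{O}})$. Since $d_{\mathcal{O}}$ takes values only in $\{1/2^n\}$ and satisfies the ultrametric inequality, the closed ball of radius $1/2^n$ around $P$ is precisely $\{Q \in \mathcal{O} : Q_n = P_n\}$. As each level $\mathcal{L}_n$ is finite, there are only finitely many possible values of $P_n$, so the assignment $P \mapsto (P_n)_{n \in \N}$ embeds $\mathcal{O}$ into the product $\prod_n \{\text{partitions of } \mathcal{L}_n\}$ of finite sets; the remark preceding the lemma — that a ray $y$ lies in the orbit-closure of $x$ as soon as $u_n$ and $v_n$ share an $H$-orbit on every level — is exactly what makes this embedding injective. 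Consequently $\mathcal{O}$ is separable and its Borel $\sigma$-algebra is generated by the projection maps $P \mapsto P_n$. It therefore suffices to prove that $H \mapsto P_n$ is measurable into the finite (discrete) set of partitions of $\mathcal{L}_n$, for each fixed $n$.

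Next I would analyze the orbit relation itself. For two vertices $u, v \in \mathcal{L}_n$, the vertices $u$ and $v$ lie in the same $H$-orbit precisely when some $\gamma \in H$ satisfies $u^\gamma = v$; hence
\[
\{ H \in \Sub_\Gamma : u \sim_H v \} = \bigcup_{\substack{\gamma \in \Gamma \\ u^\gamma = v}} \{ H \in \Sub_\Gamma : \gamma \in H \}.
\]
Each set $\{H : \gamma \in H\}$ is a coordinate cylinder in $\{0,1\}^{\Gamma}$, hence clopen in $\Sub_\Gamma$, so the displayed set is a countable union of clopen sets and is Borel. The partition $P_n$ of the finite set $\mathcal{L}_n$ is determined by the finitely many relations $u \sim_H v$ with $u,v \in \mathcal{L}_n$, so for each fixed partition $\pi$ the fiber $\{H : P_n = \pi\}$ is a finite Boolean combination of the Borel sets above, and is therefore Borel. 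This yields measurability of $H \mapsto P_n$ for every $n$, and combined with the previous paragraph completes the argument.

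The proof is essentially routine once the Borel structure on $\mathcal{O}$ is correctly identified; the one genuine subtlety — and the step I would be most careful about — is the injectivity of $P \mapsto (P_n)$, i.e.\ the claim that the full orbit-closure partition is recovered from its finite-level traces. This rests on the minimality/closure remark stated before the lemma, which I would verify by showing that if $u_n$ and $v_n$ are $H$-equivalent on each level $\mathcal{L}_n$, then a suitable sequence of elements of $H$ moves $x$ arbitrarily close to $y$, placing $y \in \overline{Hx}$.
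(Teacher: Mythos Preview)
Your proposal is correct and follows essentially the same approach as the paper: both reduce to showing that the fibers $\{H : (\mathcal{O}_H)_n = P_n\}$ are Borel in $\Sub_\Gamma$, and then express these as countable Boolean combinations of cylinder sets. The only cosmetic difference is that the paper phrases the combination via finite ``witnessing'' subsets $S \subseteq H$ that generate the partition $P_n$ together with ``forbidden'' elements $g \notin H$ that would merge blocks, whereas you use the pairwise relations $u \sim_H v$ directly.
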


Again, equivariance is obvious, and measurability is proved in the Appendix.

\subsection{Invariant random objects on $\partial T$}

Now we study invariant random closed subsets and partitions on the boundary. We show that the invariance can be extended to $\overline{\Gamma}$, which carries a Haar measure. Ergodic objects turn out to be random translates according to this Haar measure.

\begin{lemma} \label{lemma:closure_invariant}
Every $\Gamma$-invariant random closed subset of $\partial T$ is in fact $\overline{\Gamma}$-invariant. Similarly a $\Gamma$-invariant random $P\subseteq \mathcal{O}$ is $\overline{\Gamma}$-invariant.
\end{lemma}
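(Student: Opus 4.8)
The plan is to prove invariance under $\overline{\Gamma}$ by a density-plus-continuity argument. The statement we want is that a $\Gamma$-invariant random closed subset (or random partition in $\mathcal{O}$) is automatically invariant under the closure $\overline{\Gamma}$. The essential point is that invariance of the distribution is a \emph{closed} condition on the translating element, so invariance under a dense subgroup forces invariance under its closure.

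First I would set up the statement in terms of pushforward measures. Let $\nu$ be the distribution of the invariant random closed subset, a Borel probability measure on $(\mathcal{C}, d_H)$. For $\gamma \in \Aut(T)$ write $\gamma_*\nu$ for the pushforward under the map $C \mapsto C^{\gamma}$. We are given $\gamma_* \nu = \nu$ for all $\gamma \in \Gamma$, and we want $\gamma_* \nu = \nu$ for all $\gamma \in \overline{\Gamma}$. The key continuity fact I would establish is that the map $\Aut(T) \to \mathcal{C}$, $\gamma \mapsto C^{\gamma}$ is continuous for each fixed $C$; indeed, since the $\Aut(T)$-action on $\partial T$ is by isometries and continuous in the group variable (as recorded in the excerpt), if $\gamma_k \to \gamma$ in $d_{\Aut(T)}$ then $C^{\gamma_k} \to C^{\gamma}$ in the Hausdorff metric $d_H$. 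More strongly, the joint action map $\Aut(T) \times \mathcal{C} \to \mathcal{C}$ is continuous, so $(\gamma, C) \mapsto C^{\gamma}$ is jointly continuous.

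Given this, I would show that the function $\gamma \mapsto \int f \, d(\gamma_* \nu) = \int f(C^{\gamma}) \, d\nu(C)$ is continuous in $\gamma$ for every bounded continuous test function $f$ on $\mathcal{C}$. This follows from joint continuity of the action together with dominated convergence: if $\gamma_k \to \gamma$, then $f(C^{\gamma_k}) \to f(C^{\gamma})$ pointwise in $C$ and is uniformly bounded, so the integrals converge. Since $\gamma \mapsto \int f \, d(\gamma_*\nu)$ is continuous and agrees with the constant $\int f \, d\nu$ on the dense subset $\Gamma \subseteq \overline{\Gamma}$, it equals that constant on all of $\overline{\Gamma}$. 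As this holds for every $f$ in a measure-determining class of continuous functions (which separate Borel probability measures on the compact metric space $\mathcal{C}$), we conclude $\gamma_*\nu = \nu$ for all $\gamma \in \overline{\Gamma}$. The argument for a $\Gamma$-invariant random element of $\mathcal{O}$ is identical once I verify the analogous joint continuity of the $\Aut(T)$-action on $(\mathcal{O}, d_{\mathcal{O}})$: if $\gamma_k \to \gamma$ and $P^{(k)} \to P$, then agreement of partitions deep in the tree is preserved because the action on each finite level $\mathcal{L}_n$ is eventually locally constant in $\gamma$ (two group elements close in $d_{\Aut(T)}$ act identically on $\mathcal{L}_n$), giving $(P^{(k)})^{\gamma_k} \to P^{\gamma}$ in $d_{\mathcal{O}}$.

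The main obstacle I expect is purely the continuity verification for the $\mathcal{O}$-valued case, since the metric $d_{\mathcal{O}}$ records orbit \emph{partitions} rather than points, and one must check that shifting a partition by a convergent sequence of group elements converges in $d_{\mathcal{O}}$; here the finiteness of each level and the fact that elements close in $d_{\Aut(T)}$ induce the same permutation on $\mathcal{L}_n$ make this routine but require care. Everything else reduces to the standard principle that invariance of a measure under a group element is a closed condition, so it passes from a dense subgroup to its closure.
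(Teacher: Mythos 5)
Your proposal is correct and follows essentially the same route as the paper: both arguments establish that $\gamma \mapsto \gamma_*\nu$ is continuous into the weak-star topology (the paper via an explicit Lévy--Prokhorov estimate, you via test functions and dominated convergence, which amounts to the same thing) and then conclude by density of $\Gamma$ in $\overline{\Gamma}$, handling $(\mathcal{O}, d_{\mathcal{O}})$ by the identical scheme once one notes that elements close in $d_{\Aut(T)}$ act identically on each finite level.
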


\begin{proof}
Let $P(\mathcal{C})$ denote the set of probability measures on $\mathcal{C}$. The action of $\overline{\Gamma}$ on $\partial T$ gives rise to a translation action on $(\mathcal{C}, d_H)$, which in turn gives rise to an action on $P(\mathcal{C})$. 

We claim that this action $\overline{\Gamma} \times P(\mathcal{C}) \to P(\mathcal{C})$ is continuous in both coordinates with respect to the pointwise convergence topology on $\overline{\Gamma}$ and the weak star topology on $P(\mathcal{C})$. 

The weak topology on $P(\mathcal{C})$ is metrizable by the Lévy - Prokhorov metric, which is defined as follows:
\[\pi(\mu,\nu)= \inf \{\varepsilon >0 \mid \mu(A) \leq \nu(A^{\varepsilon}) + \varepsilon,\textrm{ and } \nu(A) \leq \mu(A^{\varepsilon}) + \varepsilon \textrm{ for all } A \subseteq \mathcal{C} \textrm{ Borel} \}.\] 
Here $A^{\varepsilon}$ denotes the set elements of $\mathcal{C}$ with $d_H$ distance at most $\varepsilon$ from $A$.

If $\gamma_1$ and $\gamma_2$ agree on the first $n$ levels of $T$, then for every $x \in \partial T$ we have $d(\gamma_1 x, \gamma_2 x) \leq 1/2^n$. This implies, that for a compact set $C \in \mathcal{C}$ we have \[d_H(\gamma_1 C, \gamma_2 C) \leq 1/2^n.\] This in turn implies that for all $A \subseteq \mathcal{C}$ Borel we have $\gamma_1 A \subseteq \gamma_2 A^{1/2^n}$ and vice versa.

This means, that $\big((\gamma_1)_{*}\mu\big) (A) =\mu( \gamma_1^{-1} A) \leq \mu (\gamma_2^{-1} A^{1/2^n})= \big( (\gamma_2)_{*}\mu \big) (A^{1/2^n})$, so as a consequence $\pi\big((\gamma_1)_{*}\mu, (\gamma_2)_{*}\mu \big) \leq 1/2^n$. That is, the action is continuous in the first coordinate.

Continuity in the second coordinate is an easy exercise, as it turns out that the elements of $\overline{\Gamma}$ act by isometries on $(\partial T,d_{\partial T})$, $(\mathcal{C}, d_H)$ and $\big(P(\mathcal{C}), \pi\big)$ respectively. 

As $\Gamma$ is a dense subset of $\overline{\Gamma}$, by continuity we can say that if some $\mu \in P(\mathcal{C})$ is $\Gamma$-invariant then it is also $\overline{\Gamma}$-invariant, thus proving the statement for invariant random closed subsets.

The proof for invariant random partitions follows the exact same steps after substituting $(\mathcal{C},d_H)$ with $(\mathcal{O}, d_{\mathcal{O}})$ everywhere. 
\end{proof}

\begin{remark}
The fact that the same lemma holds with the same proof for closed subsets and partitions is not a coincidence. A closed subset $C$ can be thought of as a partition into the two sets $C$ and $C^c$ (the complement might not be closed). While it is not generally true that this partition is in $\mathcal{O}$ -- it might not arise as an orbit-closure partition of some $H \subseteq \Aut(T)$ -- but it still can be approximated on the finite levels. Indeed define $C_n$ to be the set of vertices $v$ on $\mathcal{L}_n$ with $\Sh(v) \cap C \neq \emptyset$. Then $\Sh(C_n)= \bigcup_{v \in C_n} \Sh(v)$ is the $(1/2^n)$-neighborhoods of $C$ in $\partial T$. Consequently, for closed subsets $C, D \subseteq \partial T$, their Hausdorff distance is 
\[d_H(C,D) = \min_{n \in \N }\left\{ \frac{1}{2^n} \mid C_n=D_n \right\},\]
which coincides with the definition of $d_{\mathcal{O}}$.   
\end{remark}

\begin{lemma}\label{lemma:unique}
Any ergodic $\overline{\Gamma}$-invariant random closed subset of $\partial T$ is the $\gamma$ translate of a fixed closed subset $C$, where $\gamma \in \overline{\Gamma}$ is a uniform random element chosen according to the Haar measure. Similarly an ergodic  $\overline{\Gamma}$-invariant random partition from $\mathcal{O}$ is the Haar-random translate of some fixed $P \in \mathcal{O}$.
\end{lemma}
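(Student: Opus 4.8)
The plan is to prove the general fact that an ergodic invariant measure for a continuous action of a compact group is always the pushforward of Haar measure under a single orbit map; the two assertions of the lemma are then the instances $X = \mathcal{C}$ and $X = \mathcal{O}$. Throughout write $G = \overline{\Gamma}$ for the compact group, $m$ for its Haar probability measure, and let $\nu$ be the given ergodic $G$-invariant probability measure on the compact metric space $X$, on which $G$ acts continuously by (the proof of) Lemma \ref{lemma:closure_invariant}. For each point $x \in X$ I would introduce the \emph{orbit measure} $\mu_x$, namely the law of the Haar-random translate $\widetilde{x}$, i.e. the pushforward of $m$ under the orbit map $g \mapsto x^{g}$. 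Each $\mu_x$ is supported on the compact orbit of $x$ and is $G$-invariant because Haar measure is invariant; moreover $x \mapsto \mu_x$ is weak-$*$ continuous, hence Borel, since the action is continuous, so all the integrals below make sense.

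First I would express $\nu$ as the average of the orbit measures over a $\nu$-random base point. For any bounded measurable $f : X \to \mathbb{R}$, unwinding the definition of $\mu_x$, applying Fubini, and then using the $G$-invariance of $\nu$ gives
\[
 \int_X \Big( \int_G f(x^{g})\, dm(g) \Big)\, d\nu(x)
 = \int_G \Big( \int_X f\, d(g_{*}\nu) \Big)\, dm(g)
 = \int_X f\, d\nu ,
\]
so that $\int_X \mu_x \, d\nu(x) = \nu$ as measures on $X$.

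Next I would fix a Borel set $A \subseteq X$ and consider the function $x \mapsto \mu_x(A)$. A change of variables using the invariance of Haar measure shows $\mu_{x^{g}}(A) = \mu_x(A)$ for every $g \in G$, so this function is $G$-invariant; by ergodicity of $\nu$ it is therefore $\nu$-almost everywhere equal to its $\nu$-average, which is $\nu(A)$ by the previous display. Thus $\mu_x(A) = \nu(A)$ for $\nu$-almost every $x$. Since $X$ is compact metric, hence second countable, I can run this over a countable algebra generating the Borel $\sigma$-algebra and intersect the resulting countably many conull sets, obtaining a single $\nu$-conull set of points $x$ for which $\mu_x$ and $\nu$ agree on the generating algebra, hence $\mu_x = \nu$. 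Choosing any such $x_0$ and writing $C = x_0$ (respectively $P = x_0$) exhibits $\nu = \mu_{x_0}$ as the law of the Haar-random translate $\widetilde{C}$ of a fixed closed subset (respectively a fixed $P \in \mathcal{O}$), which is exactly the claim.

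The only genuinely delicate point is the passage in the last step from ``for each $A$, almost every $x$'' to ``for almost every $x$, all $A$'', but this is handled routinely by second countability together with the fact that measures agreeing on a generating algebra coincide; everything else reduces to Fubini and the defining property of ergodicity. Notably, the argument extracts $\nu = \mu_{x_0}$ directly from the ergodicity of $\nu$, so I do not expect to need the (standard) fact that each orbit measure $\mu_x$ is itself ergodic. The proof is identical for $\mathcal{C}$ and for $\mathcal{O}$, in line with the remark following Lemma \ref{lemma:closure_invariant}.
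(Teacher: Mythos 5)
Your argument is correct, but it takes a different route from the paper's. The paper proceeds in two stages: it first passes to the quotient space of $\overline{\Gamma}$-orbits of closed subsets, equips it with the metric $d([C_1],[C_2])=\min_{\gamma\in\overline{\Gamma}}d_H(C_1^{\gamma},C_2)$, and applies ergodicity to the invariant map $C\mapsto[C]$ to conclude that the measure is concentrated on a single orbit; it then identifies that orbit with the compact homogeneous space $\overline{\Gamma}/\Stab_{\overline{\Gamma}}(C)$ and invokes uniqueness of the invariant measure there. You instead decompose $\nu$ as the barycenter $\nu=\int_X\mu_x\,d\nu(x)$ of the orbit measures and apply ergodicity to the $\overline{\Gamma}$-invariant functions $x\mapsto\mu_x(A)$, which collapses both stages into one and avoids having to verify that the quotient metric is well defined or to cite the uniqueness of invariant measures on homogeneous spaces (a fact whose standard proof is essentially your averaging computation anyway). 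The only points that need care in your version are the measurability of $x\mapsto\mu_x(A)$, which you correctly obtain from weak-$*$ continuity of $x\mapsto\mu_x$, and the passage from ``for each $A$, a.e.\ $x$'' to ``for a.e.\ $x$, all $A$,'' which you correctly handle with a countable generating $\pi$-system. Both proofs are valid; yours is somewhat more self-contained, while the paper's makes the geometric picture (concentration on a single $\overline{\Gamma}$-orbit) more explicit, which it reuses later when speaking of the equivalence classes $[C]$.
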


\begin{proof}
We introduce an equivalence relation on closed subsets of $\partial T$: we say that $C_1 \sim C_2$ if and only if there is an automorphism $\gamma \in \overline{\Gamma}$ such that $C_1^{\gamma}=C_2$. Let $[C]$ denote the equivalence class of $C$.

Define the following metric on equivalence classes that measures how well one can overlap two arbitrary sets from the classes: \[d([C_1], [C_2]) = \min_{\gamma \in \overline{\Gamma} }\{d_H(C_1^{\gamma}, C_2)\}.\]

The minimum exists by compactness of $\overline{\Gamma}$, and standard arguments using the fact that $\overline{\Gamma}$ acts by isometries on $(\mathcal{C}, d_H)$ show that this is well defined and indeed a metric. 

The function $C \rightarrow [C]$ is measurable (in fact continuous) and $\overline{\Gamma}$-invariant, hence it is almost surely a constant by the ergodicity of the measure.

In other words the measure is concentrated on one equivalence class, say $[C]$. However $[C]$ is a homogeneous space of $\overline{\Gamma}$, i.e. the action of $\overline{\Gamma}$ on $[C]$ is isomorphic to the action $\overline{\Gamma} \acts \overline{\Gamma}/\Stab_{\overline{\Gamma}}(C)$. $\Stab_{\overline{\Gamma}}(C)$ is a closed and therefore compact subgroup of $\overline{\Gamma}$, and as such $\overline{\Gamma}/\Stab_{\overline{\Gamma}}(C)$ carries a unique invariant measure. Of course picking a random translate of $C$ is an invariant measure, so the two must coincide.    

The result for partitions again follows word for word, by writing $P$ for $C$ and $(\mathcal{O},d_{\mathcal{O}})$ for $(\mathcal{C}, d_H)$ everywhere. 
\end{proof}

\begin{remark}
A way to put the previous lemmas into a general framework is the following: let $G$ be a metrizable compact group acting continuously on a compact metric space $(X,d)$ and let $\Gamma\leq G$ be a dense subgroup. Then any $\Gamma$-invariant measure on $X$ is also $G$-invariant. Moreover if the metric $d$ is $G$-compatible, then any ergodic $\Gamma$-invariant measure on $X$ has the distribution of a random $G$-translate of a fixed element in $X$.
\end{remark}

If the IRS $H \leq \Gamma$ is ergodic, then so is the associated invariant random closed subset. This means that $\Fix(H)$ is the random translate of a fixed closed subset $C$. Similarly $\mathcal{O}_H$ is the random translate of some partition $P$.

\subsection{Branch Groups}

For a vertex $v$ of $T$ let $T_v$ denote the induced subtree of $T$ on $v$ and its descendants. We denote by $\Stab_{\Gamma}(v)$ the stabilizer of $v$ in $\Gamma$. Every $\gamma \in \Stab_{\Gamma}(v)$ acts on $T_v$ by an automorphism, which we denote $\gamma_v$. Then $U_v = \{\gamma_v \mid \gamma \in \Stab_{\Gamma}(v)\}$ is a subgroup of $\Aut(T_v)$. $U_v$ is the group of automorphisms of $T_v$ that are realized by some element of $\Gamma$. 

The trees we are considering are regular, so $T_v$ is canonically isomorphic to $T$. (The isomorphism preserves the ordering of the vertices on each level. If we think of the vertices as finite words over a fixed alphabet, then this isomorphism just deletes the initial segment of each word in $T_v$.) This identification of the trees allows us to compare the action of $G$ on $T$ to the action of $U_v$ on $T_v$. In particular we say that $\Gamma$ is a \emph{self-similar} group, if $U_v$ is equal to $G$ for all $v \in V(T)$ (under the above identification of the trees they act on).

For a vertex $v \in V(T)$ let $\Rst_{\Gamma}(v)$ denote the \emph{rigid stabilizer} of $v$, that is the subgroup of elements of $\Gamma$ that fix every vertex except the descendants of $v$. Clearly $\Rst_{\Gamma}(v) \leq \Stab_{\Gamma}(v)$. For a subset of vertices $V \subseteq \mathcal{L}_n$ the rigid stabilizer of the set is $\Rst_{\Gamma}(V) = \prod_{v \in V} \Rst_{\Gamma}(v)$.

Throughout the paper we will be able to prove statements in varying levels of generality, so we introduce several notions of branching. In all cases we assume $\Gamma$ to be transitive on all levels. We say that $\Gamma$ is \emph{weakly branch}, if all rigid vertex stabilizers $\Rst_{\Gamma}(v)$ are nontrivial. We say that the group $\Gamma$ is \emph{branch}, if for all $n$ the rigid level stabilizer $\Rst_{\Gamma}(\mathcal{L}_n)$ is a finite index subgroup of $\Gamma$. Finally we define \emph{regular branch groups}. 

\begin{definition*}
Suppose the self-similar group $\Gamma$ has a finite index subgroup $K$. The group $K^d$ is a subgroup of $\Aut(T)$, each component acting independently on $T_{v_i}$ where $\{v_1, \ldots, v_d\}$ are the vertices on $\mathcal{L}_1$. We say that $\Gamma$ is a \emph{regular branch group} over $K$, if $K$ contains $K^d$ as a finite index subgroup.
\end{definition*}

In self-similar groups the action on any subtree $T_v$ is the same as on $T$, however for some $v_1, v_2 \in \mathcal{L}_n$ we might not be able to move $T_{v_1}$ and $T_{v_2}$ independently. This independence (up to finite index) is what we required in the definition above. The following lemma is straightforward and we leave the proof to the reader.

\begin{lemma} Having finite index and being a direct product remains to be true after taking closures:
\begin{enumerate}
\item Let $K \subseteq \Gamma$ be a subgroup of finite index. Then $\overline{K}$ is a finite index subgroup of $\overline{\Gamma}$;
\item  $\overline{\underbrace{K \times \dots \times K}_{d}} = \underbrace{\overline{K} \times \dots \times \overline{K}}_{d}$.\end{enumerate}
\end{lemma}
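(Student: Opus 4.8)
The plan is to treat the two statements separately, in each case reducing to two elementary facts about the compact topological group $\Aut(T)$: that the closure of a subgroup is again a subgroup, and that left translation $x \mapsto gx$ is a homeomorphism.

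For part (1), I would start from a coset decomposition $\Gamma = \bigcup_{i=1}^{n} g_i K$ into finitely many left cosets, where $n = [\Gamma : K]$. Since a finite union commutes with taking closures, and since each left translation is a homeomorphism of $\Aut(T)$, we get
\[ \overline{\Gamma} = \bigcup_{i=1}^{n} \overline{g_i K} = \bigcup_{i=1}^{n} g_i \overline{K}. \]
As $\overline{K}$ is a (closed) subgroup of $\overline{\Gamma}$, the sets $g_i \overline{K}$ are left cosets of $\overline{K}$, and the displayed equality exhibits $\overline{\Gamma}$ as a union of at most $n$ such cosets. Hence $[\overline{\Gamma} : \overline{K}] \leq n < \infty$, which is the claim.

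For part (2), write $\mathcal{L}_1 = \{v_1, \dots, v_d\}$ and recall that the level stabilizer $\Stab_{\Aut(T)}(\mathcal{L}_1)$ --- the kernel of the continuous restriction map sending an automorphism to its action on $\mathcal{L}_1$ --- is a closed subgroup of $\Aut(T)$ that factors as the direct product $\prod_{i=1}^{d} \Aut(T_{v_i})$; moreover the subspace topology it inherits from $\Aut(T)$ agrees with the product topology, since two automorphisms fixing $\mathcal{L}_1$ agree up to level $n$ exactly when all $d$ components agree up to level $n-1$ of their respective subtrees. Given this identification, one inclusion is immediate: the set $\overline{K} \times \cdots \times \overline{K}$ is a product of closed sets, hence closed in $\Stab_{\Aut(T)}(\mathcal{L}_1)$ and therefore in $\Aut(T)$, and it contains $K \times \cdots \times K$, so it contains the closure $\overline{K \times \cdots \times K}$. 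For the reverse inclusion, any tuple $(\kappa_1, \dots, \kappa_d)$ with each $\kappa_i \in \overline{K}$ is the componentwise limit of tuples $(k_1^{(n)}, \dots, k_d^{(n)}) \in K \times \cdots \times K$, and componentwise convergence is exactly convergence in the product topology, so the tuple lies in $\overline{K \times \cdots \times K}$. The two inclusions yield the desired equality.

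Neither part presents a real obstacle; the only point requiring genuine care is the topological bookkeeping in part (2), namely verifying that the product topology on $\prod_{i=1}^{d} \Aut(T_{v_i})$ coincides with the subspace topology coming from the metric $d_{\Aut(T)}$, and that $\Stab_{\Aut(T)}(\mathcal{L}_1)$ is closed. Once these identifications are in place --- both of which follow directly from the level-wise definition of the metric on $\Aut(T)$ --- everything reduces to the standard fact that closure commutes with finite products in a metric space, together with the homeomorphism property of translations used in part (1).
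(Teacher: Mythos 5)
Your proof is correct. The paper explicitly leaves this lemma to the reader as "straightforward," and your argument is precisely the standard one that is intended: part (1) via the finite coset decomposition together with the fact that translations are homeomorphisms and finite unions commute with closure, and part (2) via the identification of $\Stab_{\Aut(T)}(\mathcal{L}_1)$ with the product $\prod_{i=1}^{d}\Aut(T_{v_i})$ carrying the product topology, where closure commutes with finite products. The topological bookkeeping you flag (the subspace metric on the level stabilizer inducing the product topology, and the stabilizer being closed) is exactly the content that needs checking, and you check it correctly.
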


\section{Fixed points and orbit-closures of IRS's} \label{section:orbit_closures}

In this section we prove Theorems \ref{theorem:continuum_many} and \ref{theorem:clopen}.
To a closed subset $C$ on the boundary one can associate two natural subgroup of $\Gamma$, the pointwise stabilizer of $C$ and the setwise stabilizer of $C$. The pointwise stabilizer gives us a big ``zoo'' of IRS's when we choose $C$ as a $\overline{\Gamma}$-invariant closed subset, proving Theorem \ref{theorem:continuum_many}. The setwise stabilizer will play a key role in the proof of Theorem \ref{theorem:clopen}.

In order to investigate these stabilizers we introduce a coloring to encode $C$ on the tree $T$. The coloring will help analyzing the Haar random translate $\widetilde{C}$.

\subsection{Closed subsets of the boundary}
\label{subsection:closed_subsets_of_boundary}

To every closed subset of the boundary $C \subseteq \partial T$ we associate a vertex coloring $\varphi:V(T) \to \{r,g,b\}$ with 3 colors: red, green and blue. If a vertex has its shadow completely in $C$, then color it red. If it has its entire shadow in the complement of $C$, then color it blue. Otherwise color it green. 

\[\varphi(v) = \left\{ \begin{array}{ll} r, & \textrm{if } \Sh(v) \subseteq C; \\ b, & \textrm{if }  \Sh(v) \cap C = \emptyset;   \\ g, & \textrm{otherwise}. \end{array} \right.\]

All descendants of a red vertex are red, and similarly all descendants of blue vertices are blue. On the other hand all ancestors of a green vertex are green.

$C$ being clopen is equivalent to saying that after some level all vertices are either red or blue. So if $C$ is not clopen, then there are green vertices on all the levels. Using K\"onig's lemma we see that there is an infinite ray with vertices colored green. This ray corresponds to a boundary point of $C$. As the complement of $C$ is open, we get that every vertex on this infinite ray has a blue descendant.

\begin{lemma} \label{lemma:infinite_orbit}
Let $\Gamma \subseteq \Aut(T)$ be a group of automorphisms that is transitive on every level. Let $\varphi:V(T) \to \{r,g,b\}$ be a vertex coloring with the colors red, green and blue, and suppose it satisfies the above properties, namely:

\begin{enumerate}
\item \label{property:red_blue} descendants of red and blue vertices are red and blue respectively;
\item ancestors of green vertices are green; (This formally follows from \ref{property:red_blue}.)
\item there is an infinite ray $(u_0, u_1, \ldots)$ of green vertices such that for each $u_i$ there exists some descendant of $u_i$ which is blue. 
\end{enumerate}
 
Then $\varphi$ has infinitely many $\Gamma$-translates.
\end{lemma}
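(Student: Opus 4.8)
The plan is to argue by contradiction through a density dichotomy: if $\varphi$ had only finitely many $\Gamma$-translates I will force the proportion of green vertices on $\mathcal{L}_n$ to be bounded away from $0$ and, simultaneously, to tend to $0$. It is convenient to pass from $\varphi$ to the closed set $C$ it encodes, namely the rays that never meet a blue vertex; since the passage $\varphi \mapsto C$ is $\Gamma$-equivariant, it suffices to show that $C$ has infinitely many $\Gamma$-translates. Properties (1)--(3) guarantee that $\varphi$ is the genuine coloring of $C$, so that every green vertex has a blue descendant (its shadow meets the open set $\partial T \setminus C$) and no green vertex has an all-green descendant subtree (that would force its shadow into $C$, making it red). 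For a coloring only assumed to satisfy (1)--(3), one first replaces $C$ by the closed set of rays all of whose vertices are green with a blue descendant: this set has no red vertices, so its green vertices are automatically of the above kind, and since it has no more $\Gamma$-translates than $\varphi$ does, it suffices to treat it.

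So assume $S := \Stab_\Gamma(C)$ has finite index $k$. Because $S$ preserves $\varphi$, every $S$-orbit on a level is monochromatic, so I only need a lower bound on the orbit of the green vertex $u_n$. Writing $S_{u_n} = S \cap \Stab_\Gamma(u_n)$ and using $[\Gamma : \Stab_\Gamma(u_n)] = d^n$ (level-transitivity),
\[
k\,|S u_n| = [\Gamma : S_{u_n}] = d^n\,[\Stab_\Gamma(u_n) : S_{u_n}] \geq d^n,
\]
so the orbit $S u_n$, which is entirely green, has at least $d^n/k$ elements. Hence the number $g_n$ of green vertices on $\mathcal{L}_n$ satisfies $g_n \geq d^n/k$ for every $n$.

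For the opposite estimate I want a uniform bound $\Delta$ on how far below a green vertex one must descend to reach a blue one. Granting such a $\Delta$, property (2) says the level-$n$ ancestor of any green vertex is green, so every green vertex on $\mathcal{L}_{n+\Delta}$ lies below a green vertex on $\mathcal{L}_n$; and each green $v \in \mathcal{L}_n$ has a blue vertex among its $d^\Delta$ descendants at depth $\Delta$, hence at most $d^\Delta - 1$ green ones. Summing over the green vertices of $\mathcal{L}_n$ yields $g_{n+\Delta} \leq (d^\Delta - 1)\,g_n$, and therefore $g_{j\Delta}/d^{j\Delta} \leq (1 - d^{-\Delta})^{\,j} \to 0$, contradicting $g_n/d^n \geq 1/k$. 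This contradiction shows that $C$, and with it $\varphi$, has infinitely many $\Gamma$-translates.

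The main obstacle is producing the uniform depth bound $\Delta$. The finite-index hypothesis is what should supply it: $S$ has at most $k$ orbits on each level, and two green vertices in a common $S$-orbit have color-isomorphic descendant subtrees, hence the same distance to their nearest blue vertex, so on each fixed level only finitely many such distances occur. Upgrading this to a bound uniform across \emph{all} levels---equivalently, ruling out a sequence of green vertices whose first blue descendant recedes to infinity---is the delicate point; here one exploits the self-similar structure of the groups in question (for a regular branch $\Gamma$ the level stabilizers act richly on every subtree, forcing the colored subtrees below green vertices to come from a finite pool). Once $\Delta$ is secured, the remaining steps are the elementary index and level-counting bookkeeping above.
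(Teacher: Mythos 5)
Your argument is correct up to and including the lower bound $g_n \geq d^n/k$ on the number of green vertices per level, but it then hinges entirely on a uniform depth bound $\Delta$ (every green vertex has a blue descendant within $\Delta$ levels), and this bound is never established. What the finite-index hypothesis gives you is only a \emph{per-level} bound: on each $\mathcal{L}_n$ the green vertices fall into at most $k$ orbits of $S=\Stab_\Gamma(C)$, so at most $k$ values of ``distance to nearest blue descendant'' occur on that level --- but these values may grow with $n$, and the colored subtrees hanging below green vertices on different levels are not related by the group action (which preserves levels), so there is no ``finite pool'' across levels. The escape route you sketch invokes self-similarity and the regular branch property, which are not hypotheses of the lemma: it is stated for an arbitrary level-transitive $\Gamma\subseteq\Aut(T)$. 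Worse, the uniform $\Delta$ is genuinely false for the sets the lemma must handle: a closed, non-clopen $C$ of positive measure (a ``fat Cantor set'' on $\partial T$) satisfies hypotheses (1)--(3) but has green vertices whose nearest blue descendant recedes to infinity, so your density estimate $g_{j\Delta}/d^{j\Delta}\to 0$ cannot be the mechanism of the contradiction; indeed your own lower bound shows only that a finite-index stabilizer forces $\mu(C)\geq 1/k$, which is not by itself absurd. The missing step is therefore not a technicality but the entire content of the lemma.

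For comparison, the paper's proof is a short direct construction that uses level-transitivity and nothing else: having fixed the green ray $(u_0,u_1,\ldots)$, pick a blue descendant $w_{n_1}$ of $u_0$, use transitivity on $\mathcal{L}_{n_1}$ to find $\gamma_1$ carrying $w_{n_1}$ to $u_{n_1}$, then pick a blue descendant $w_{n_2}$ of $u_{n_1}$ and a $\gamma_2$ carrying it to $u_{n_2}$, and so on. The translate $\varphi^{\gamma_i}$ colors $u_{n_i}$ blue while coloring $u_{n_j}$ green for $j<i$, so the translates are pairwise distinct and there are infinitely many of them. If you want to keep a contradiction-style argument, you would still need to extract from the finite-index assumption some statement about a \emph{single} level (as the orbit count does), rather than a statement uniform over all levels, which is exactly what the direct construction avoids needing.
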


\begin{proof}
The root $u_0$ is colored green. It has a blue descendant, say on the level $n_1$. We denote this blue descendant $w_{n_1}$. By the transitivity assumption there is some $\gamma_1 \in \Gamma$, such that $\gamma_1(w_{n_1})=u_{n_1}$. Furthermore, $u_{n_1}$ has a blue descendant, on some level $n_2$, we denote it $w_{n_2}$. We choose $\gamma_2 \in \Gamma$ such that $\gamma_2(w_{n_2})=u_{n_2}$, and so on. One can easily check, that moving $\varphi$ with the different $\gamma_i$ yields different colorings. Indeed $\varphi^{\gamma_{i}} (u_{n_j}) = g$ for all $j <i$, and $\varphi^{\gamma_{i}} (u_{n_i}) = b$, and this shows that the $\varphi^{\gamma_i}$ are pairwise distinct. See Figure \ref{fig:colored_ray}.
\begin{figure}[h]
   \centering
   \includegraphics[width=12cm]{./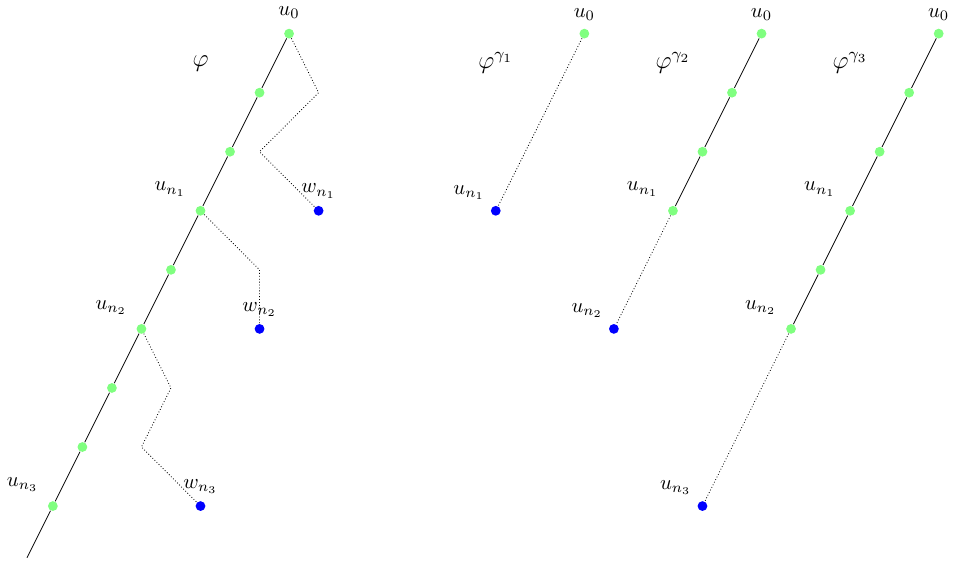}
   \caption{Distinct colorings}
   \label{fig:colored_ray}
\end{figure}
\end{proof}

\begin{corollary} \label{corollary:atomless_coloring}
Let $\Gamma$ and $\varphi$ be as in Lemma $\ref{lemma:infinite_orbit}$. Then the uniform (Haar) random $\overline{\Gamma}$-translate of $\varphi$ is an atomless measure on the space of all 3-vertex-colorings.
\end{corollary}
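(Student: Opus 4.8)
The plan is to realize the Haar random translate $\widetilde{\varphi}$ as the pushforward of the Haar probability measure $\mu$ on the compact group $\overline{\Gamma}$ under the orbit map $o \colon \overline{\Gamma} \to \{r,g,b\}^{V(T)}$ given by $o(\gamma) = \varphi^{\gamma}$, where the target carries the product topology and $\overline{\Gamma}$ acts on it continuously. The map $o$ is continuous, hence measurable, so $\nu = o_{*}\mu$ is a well-defined Borel probability measure, and it is precisely the distribution of $\widetilde{\varphi}$. To prove that $\nu$ is atomless it then suffices to show $\nu(\{\psi\}) = 0$ for every coloring $\psi$.

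First I would identify the possible atoms. For a fixed coloring $\psi$ the preimage $o^{-1}(\psi) = \{\gamma \in \overline{\Gamma} \mid \varphi^{\gamma} = \psi\}$ is either empty, in which case $\nu(\{\psi\}) = 0$, or it is a single coset of the closed stabilizer $S = \Stab_{\overline{\Gamma}}(\varphi) = \{\gamma \mid \varphi^{\gamma} = \varphi\}$. Indeed, if $\varphi^{\gamma_0} = \psi$, then $\varphi^{\gamma} = \psi$ is equivalent to $\varphi^{\gamma\gamma_0^{-1}} = \varphi$, i.e.\ to $\gamma\gamma_0^{-1} \in S$, so that $o^{-1}(\psi) = S\gamma_0$. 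Since the Haar measure of a compact group is translation invariant, every nonempty preimage has the same measure $\mu(S)$. Consequently $\nu(\{\psi\}) \in \{0, \mu(S)\}$ for every $\psi$; in other words every atom of $\nu$ is an orbit point and carries mass exactly $\mu(S)$.

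It then remains to show $\mu(S) = 0$, and this is where I would invoke Lemma \ref{lemma:infinite_orbit}: under the stated hypotheses on $\varphi$, the coloring has infinitely many $\Gamma$-translates, hence infinitely many $\overline{\Gamma}$-translates, so there are infinitely many pairwise distinct colorings $\psi_1, \psi_2, \ldots$ in the orbit of $\varphi$. Their preimages $o^{-1}(\psi_i)$ are pairwise disjoint cosets of $S$, each of measure $\mu(S)$, all living inside the probability space $(\overline{\Gamma}, \mu)$; hence $\sum_{i} \mu(S) \leq 1$, which forces $\mu(S) = 0$. Therefore every atom has mass $0$, i.e.\ $\nu$ is atomless. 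The essential content of the argument is entirely supplied by Lemma \ref{lemma:infinite_orbit}, so I do not expect a genuine obstacle; the only point requiring care is the bookkeeping of the left/right conventions in identifying $o^{-1}(\psi)$ with a coset of $S$ and in applying translation invariance of $\mu$.
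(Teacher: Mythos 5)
Your proof is correct and follows essentially the same route as the paper: both arguments derive atomlessness from Lemma \ref{lemma:infinite_orbit} by observing that infinitely many translates, each carrying the same Haar mass, cannot fit inside a probability space. Your coset-of-the-stabilizer bookkeeping is a slightly cleaner packaging, since it lets you apply the lemma only to $\varphi$ itself, whereas the paper applies it to the hypothetical atom $\varphi^{g}$ and must note in passing that the hypotheses of the lemma are preserved under translation.
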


\begin{proof}
If there was some translate $\varphi^{g}$, $g \in \overline{\Gamma}$ which occurred with positive probability, then all its $\Gamma$-translates would occur with the same positive probability. Furthermore $\varphi^{g}$ would also satisfy the assumptions of Lemma \ref{lemma:infinite_orbit}, which then implies that it has infinitely many $\Gamma$-translates, and they would have infinite total measure, which is a contradiction. 
\end{proof}

\begin{corollary}\label{corollary:atomless_IRCS}
If $C$ is not clopen, then its random $\overline{\Gamma}$-translate $\widetilde{C}$ is an atomless measure on $(\mathcal{C}, d_H)$.
\end{corollary}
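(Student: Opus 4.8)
The plan is to deduce this from Corollary~\ref{corollary:atomless_coloring} via the correspondence $\Phi\colon C \mapsto \varphi_C$ that assigns to a closed set its associated $3$-coloring. First I would record that $\Phi$ is injective and $\overline{\Gamma}$-equivariant. Injectivity holds because $C$ can be recovered from $\varphi_C$: a ray $x=(v_0,v_1,\ldots)$ lies in $C$ exactly when none of its vertices $v_n$ is colored blue. Indeed, if $x\in C$ then every shadow $\Sh(v_n)$ meets $C$, so no $v_n$ is blue; and if $x\notin C$ then, since $C$ is closed and the shadows form a neighborhood basis at $x$, some $\Sh(v_n)$ is disjoint from $C$, making $v_n$ blue. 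Equivariance is immediate from the way shadows transform under $\Aut(T)$, and applied to $\widetilde C=C^{\gamma}$ it gives $\varphi_{\widetilde C}=(\varphi_C)^{\gamma}=\widetilde{\varphi_C}$; in other words the random coloring attached to $\widetilde C$ is precisely the Haar-random translate of $\varphi_C$.

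Next I would check that $\varphi_C$ satisfies the three hypotheses of Lemma~\ref{lemma:infinite_orbit}. Properties (1) and (2) hold for the coloring of any closed set. For property (3) I invoke the K\"onig's lemma argument already given before the lemma: since $C$ is not clopen, green vertices occur on every level, whence there is an infinite green ray, and openness of $\partial T\setminus C$ guarantees that each vertex of this ray has a blue descendant. The group $\Gamma$ here is transitive on all levels by our standing assumption on (weakly) branch groups, so Lemma~\ref{lemma:infinite_orbit} applies and Corollary~\ref{corollary:atomless_coloring} yields that $\widetilde{\varphi_C}$ is an atomless measure on the space of $3$-colorings.

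It then remains to transport atomlessness back across $\Phi$. For any closed set $D$, injectivity of $\Phi$ gives the equality of events $\{\widetilde C=D\}=\{\varphi_{\widetilde C}=\varphi_D\}$, so $\Pr[\widetilde C=D]=\Pr[\widetilde{\varphi_C}=\varphi_D]=0$ because $\widetilde{\varphi_C}$ has no atoms. As $D$ is arbitrary, $\widetilde C$ carries no atoms, which is the assertion.

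The argument has no genuinely hard step; the only point demanding care is the injectivity of $\Phi$, which is exactly what allows a hypothetical atom of $\widetilde C$ to pull back to an atom of the coloring measure and contradict Corollary~\ref{corollary:atomless_coloring}. I would be careful that the recovery of $C$ from $\varphi_C$ uses the closedness of $C$ in the direction $x\notin C\Rightarrow v_n$ blue, so that $\Phi$ is a genuine bijection onto its image and the two measures are literally matched under it.
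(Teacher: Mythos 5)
Your proof is correct and follows the same route the paper intends: the paper states this corollary without proof, as an immediate consequence of Corollary~\ref{corollary:atomless_coloring} via the (equivariant, injective) correspondence $C\mapsto\varphi_C$, with property (3) supplied by the K\"onig's lemma argument given just before Lemma~\ref{lemma:infinite_orbit}. The only remark worth making is that for the transfer step the inclusion $\{\widetilde C=D\}\subseteq\{\varphi_{\widetilde C}=\varphi_D\}$ (i.e.\ well-definedness plus equivariance of $\Phi$) already gives $\Pr[\widetilde C=D]=0$, so the injectivity you single out as the delicate point is true but not strictly needed.
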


\subsection{Continuum many distinct atomless ergodic IRS's in \\weakly branch groups}

We are already equipped to prove Theorem \ref{theorem:continuum_many}. The contents of this subsection are not necessary for the proof of Theorem \ref{theorem:clopen}.

\begin{proof}[Proof of Theorem \ref{theorem:continuum_many}]
We argue that for any closed subset $C \subseteq \partial T$ the random subgroup $\Stab_{\Gamma}(\widetilde{C})$ is an ergodic IRS. This follows from $\widetilde{C}$ being an ergodic invariant random closed subset. 

We also claim that if $[C_1] \neq [C_2]$, then the corresponding IRS's are distinct. To prove this we first observe that in weakly branch groups taking the stabilizer $\Stab_{\Gamma}(C)$ of a closed subset $C$, and then looking at the fixed points of that subset we get back $C$.

\begin{lemma} \label{lemma:fix_stab_id}
For any $C \subseteq \partial T$ closed we have $\Fix\big(\Stab_{\Gamma}(C)\big) = C$.
\end{lemma}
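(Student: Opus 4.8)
The plan is to prove the two inclusions separately. The inclusion $C \subseteq \Fix\big(\Stab_{\Gamma}(C)\big)$ is immediate from the definitions: by construction every element of $\Stab_{\Gamma}(C)$ fixes $C$ pointwise, so every point of $C$ is fixed by the whole subgroup. The content of the lemma is the reverse inclusion $\Fix\big(\Stab_{\Gamma}(C)\big) \subseteq C$, which I would prove contrapositively: given a boundary point $x \notin C$, I want to exhibit an element of $\Stab_{\Gamma}(C)$ that moves $x$.

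First I would locate a blue vertex on the ray representing $x$. Since $C$ is closed and $x \notin C$, some vertex $v$ on this ray satisfies $\Sh(v) \cap C = \emptyset$, i.e.\ $v$ is blue in the coloring $\varphi$. The point is that $\Rst_{\Gamma}(v) \subseteq \Stab_{\Gamma}(C)$: every element of $\Rst_{\Gamma}(v)$ fixes everything outside $\Sh(v)$, and $C$ lies entirely outside $\Sh(v)$. Thus it suffices to produce an element of $\Rst_{\Gamma}(v)$ moving $x$, that is, to show that $\Rst_{\Gamma}(v)$ has no fixed point in $\Sh(v)$, since $x \in \Sh(v)$.

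This last statement is the crux, and I would establish it via minimality. Consider $\Stab_{\Gamma}(v)$ acting on $\Sh(v) \cong \partial T_v$. Using transitivity of $\Gamma$ on each level together with the fact that the parent of a vertex is determined by the vertex, $\Stab_{\Gamma}(v)$ is transitive on the descendants of $v$ at every level, and a level-transitive action on a rooted tree is minimal on its boundary (every orbit is dense, so there is no proper nonempty closed invariant subset). On the other hand $\Rst_{\Gamma}(v)$ is normal in $\Stab_{\Gamma}(v)$, because conjugating an element supported on $T_v$ by an element fixing $v$ again yields an element supported on $T_v$. Hence the closed set $\Fix\big(\Rst_{\Gamma}(v)\big) \cap \Sh(v)$ is $\Stab_{\Gamma}(v)$-invariant, using that $\sigma \cdot \Fix(R) = \Fix(\sigma R \sigma^{-1})$ and that $\Sh(v)$ is preserved. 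By minimality this set is either empty or all of $\Sh(v)$; the latter would force $\Rst_{\Gamma}(v)$ to act trivially on $\partial T_v$, hence to be trivial, contradicting that $\Gamma$ is weakly branch. Therefore $\Fix\big(\Rst_{\Gamma}(v)\big) \cap \Sh(v) = \emptyset$, which supplies the required moving element and finishes the proof.

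The main obstacle is precisely this crux claim about fixed points of $\Rst_{\Gamma}(v)$ in $\Sh(v)$: a naive attempt to move $x$ by a single rigid-stabilizer element fails, since such an element may happen to fix the particular ray $x$ while acting only on other subtrees. The argument genuinely needs the normality of $\Rst_{\Gamma}(v)$ in $\Stab_{\Gamma}(v)$ together with the minimality coming from level-transitivity in order to rule out any invariant closed set of fixed rays. The remaining steps are routine.
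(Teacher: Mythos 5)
Your proof is correct and follows essentially the same route as the paper: locate a vertex $v$ on the ray of $x$ with $\Sh(v)\cap C=\emptyset$, observe that $\Rst_{\Gamma}(v)\subseteq\Stab_{\Gamma}(C)$, and use weak branching together with level-transitivity to produce an element of $\Rst_{\Gamma}(v)$ moving $x$. The only difference is cosmetic: the paper performs the last step by an explicit conjugation $\eta\gamma_0\eta^{-1}$ that moves the specific vertex $u_m$ on the ray of $x$, whereas you package that same conjugation as normality of $\Rst_{\Gamma}(v)$ in $\Stab_{\Gamma}(v)$ combined with minimality of the boundary action of $\Stab_{\Gamma}(v)$ on $\Sh(v)$ --- both are valid.
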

\begin{proof}
The key idea -- present in \cite[Proposition 8]{benli2015universal} and earlier works credited there -- is to show, that for any $x \notin C$, with $x=(u_0, u_1, \ldots)$ we can find some $n$ large enough such that $\Sh(u_n) \cap  C=\emptyset$, and some $\gamma \in \Rst_{\Gamma}(u_n)$ with $x^{\gamma} \neq x$. 

Indeed such an $n$ exists as the complement of $C$ is open. By weak branching there exists some $\gamma_0 \in \Rst_{\Gamma}(u_n)$ moving some descendant of $u$ denoted $v$ to $v'\neq v$ on $\mathcal{L}_m$, $m \geq n$. By transitivity we can find some $\eta \in \Stab_{\Gamma}(u_n)$ with $v = u_m^{\eta}$. Now $u_m^{\eta \gamma_0 \eta^{-1}} = (v')^{\eta^{-1}} \neq u_m$, so $\gamma = \eta \gamma_0 \eta^{-1} \in \Rst_{\Gamma} (u_n)$, and $x^{\gamma} \neq x$ as witnessed on $\mathcal{L}_m$. 

As $\Sh(u_n) \cap  C=\emptyset$ we have $ \Rst_{\Gamma}(u_n) \subseteq \Stab_{\Gamma}(C)$. The existence of $\gamma$ shows that $x \notin \Fix\big(\Stab_{\Gamma}(C)\big)$, which implies $\Fix\big(\Stab_{\Gamma}(C)\big) \subseteq C$, which is the nontrivial inclusion.      
\end{proof}

\medskip
To show that $[C_1] \neq [C_2]$ implies that $\Stab_{\Gamma}(\widetilde{C_1})$ and $\Stab_{\Gamma}(\widetilde{C_2})$ distinct simply consider the function $H \mapsto [\Fix(H)]$ on ergodic IRS's. Using Lemma \ref{lemma:fix_stab_id} we have

\[\big[\Fix\big(\Stab_{\Gamma}(\widetilde{C_1})\big)\big] = [\widetilde{C_1}] = [C_1].\]

This implies that the constructed IRS are distinct. By Corollary \ref{corollary:atomless_IRCS} we know that if $C$ is not clopen then $\widetilde{C}$ is atomless. Then Lemma \ref{lemma:fix_stab_id} implies that $\Stab_{\Gamma}{\widetilde{C}}$ is an atomless IRS.

There are continuum many non-$\overline{\Gamma}$-equivalent closed (but not clopen) subsets of $\partial T$, as one can construct a closed subset $C_r$ with $\mu_{\partial T} (C_r) = r$ for any $r \in [0,1]$, and if $r$ is irrational then $C$ is not clopen. 
\end{proof}

\subsection{Random colorings in regular branch groups}

To build towards Theorem \ref{theorem:clopen}, we resume the investigation of random colorings in the case when the ambient group is regular branch.

Let $\Gamma$ be a regular branch group over $K$. Consider the finite index subgroup $\overline{K^{d^n}} \leq \Stab_{\overline{\Gamma}}(\mathcal{L}_n) = \cap_{v \in \mathcal{L}_n} \Stab_{\overline{\Gamma}}(v)$, and let $\{t_1, \ldots, t_l \}$ be a transversal to $\overline{K^{d^n}}$ in $\overline{\Gamma}$. We can think of a random element $\gamma$ of $\overline{\Gamma}$ as $\gamma = \gamma_0 \cdot k$, where $\gamma_0$ is chosen uniformly from the transversal and $k$ is chosen according to the Haar measure on $\overline{K^{d^n}}$. 

Take $\varphi$ to be a 3-vertex-coloring as in Lemma \ref{lemma:infinite_orbit}, and fix an infinite green ray $(u_0, u_1, \dots)$ with all vertices $u_i$ having blue descendants. Let $\widetilde{\varphi} = \varphi^{\gamma}$ denote the translate of $\varphi$ by the Haar random group element $\gamma$. Conditioning on $\gamma_0 = t_i$ we get a conditional distribution $(\widetilde{\varphi} \vert \gamma_0=t_i)$. Note that this random coloring is always the same up to the $n^{\textrm{th}}$ level, and $t_i$ already determines where the random translate of $(u_0, u_1, \dots)$ intersects $\mathcal{L}_n$, namely at $v=u_n^{t_i}$.

\begin{lemma} \label{lemma:branch_atomless_coloring}
The restriction of the random coloring $(\widetilde{\varphi} \vert \gamma_0=t_i)$ to $T_v$ is atomless. 
\end{lemma}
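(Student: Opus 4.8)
The plan is to make the conditional distribution explicit, push it into the single subtree $T_v$, and then reduce to the atomlessness result already proved for colorings of a tree, namely Corollary \ref{corollary:atomless_coloring}. Conditioning on $\gamma_0 = t_i$ means writing $\gamma = t_i k$ with $k$ Haar-distributed on $\overline{K^{d^n}}$. Iterating the lemma on closures of products, we have $\overline{K^{d^n}} = \prod_{w \in \mathcal{L}_n} \overline{K}$, where the factor indexed by $w$ acts only on $T_w$ and is an independent Haar-random element of $\overline{K}$. Since $\overline{K^{d^n}} \leq \Stab_{\overline{\Gamma}}(\mathcal{L}_n)$, the element $k$ preserves $T_v$ and acts on it as its $v$-component $k_v$, which is Haar-random on $\overline{K}$. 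Setting $\psi := \varphi^{t_i}\vert_{T_v}$, the restriction of $(\widetilde{\varphi}\mid \gamma_0=t_i) = (\varphi^{t_i})^k$ to $T_v$ is exactly $\psi^{k_v}$, the Haar-random $\overline{K}$-translate of the fixed coloring $\psi$ on $T_v \cong T$.

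Next I would verify that $\psi$ is a coloring to which the earlier machinery applies, i.e.\ that it satisfies the hypotheses of Lemma \ref{lemma:infinite_orbit} on $T_v$. Because $t_i$ is an automorphism preserving both colors and the descendant relation, the green ray $(u_0, u_1, \dots)$ of $\varphi$ is carried to the green ray $(u_0^{t_i}, u_1^{t_i}, \dots)$ of $\varphi^{t_i}$, which meets $\mathcal{L}_n$ at $v = u_n^{t_i}$. Its tail $(u_n^{t_i}, u_{n+1}^{t_i}, \dots)$ therefore lies inside $T_v$ and is an infinite green ray each of whose vertices still has a blue descendant; moreover descendants of red and blue vertices stay red and blue within $T_v$. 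Hence $\psi$ satisfies properties (1)--(3) of Lemma \ref{lemma:infinite_orbit}. Applying Corollary \ref{corollary:atomless_coloring} to the group acting on $T_v \cong T$ then yields that $\psi^{k_v}$ is atomless, which is exactly the assertion.

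The main obstacle is the final step: Corollary \ref{corollary:atomless_coloring} (through Lemma \ref{lemma:infinite_orbit}) requires the group realized on $T_v$ to be transitive on every level, yet a priori the branching subgroup $K$ need not be level-transitive even though $\Gamma$ is. This is where the regular-branch hypothesis must be used carefully: the price of the clean product decomposition $\overline{K^{d^n}} = \prod_w \overline{K}$ is that the group induced on $T_v$ is only $\overline{K}$, not the full self-similar group $\overline{\Gamma}$. I would resolve this by showing that the induced $\overline{K}$ on $T_v$ is again (weakly) branch — its rigid stabilizers contain copies of $\overline{K}$ since $\overline{K^{d^m}} \leq \overline{K}$ — and that it is level-transitive in the regular-branch setting, which by the standing convention of the paper is part of being weakly branch. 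If level-transitivity of $\overline{K}$ cannot be assumed outright, the fallback is to inspect the proof of Lemma \ref{lemma:infinite_orbit}: all that is really needed is that each blue descendant $w_{n_i}$ along the green ray can be mapped by an element of $\overline{K}$ onto the ray vertex $u_{n_i}$, and I would supply these moves directly from the nontrivial rigid stabilizers situated along the green ray, thereby producing infinitely many distinct $\overline{K}$-translates of $\psi$ — which is all the atom-exclusion argument behind Corollary \ref{corollary:atomless_coloring} actually consumes. Establishing this transitivity/richness of the induced group is the crux; the reduction and the bookkeeping about restrictions of colorings are routine.
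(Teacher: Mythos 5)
Your reduction — writing $\gamma = t_i k$ with $k$ Haar on $\overline{K^{d^n}} = \prod_{w \in \mathcal{L}_n}\overline{K}$, identifying the restriction to $T_v$ with the Haar-random $\overline{K}$-translate of $\psi = \varphi^{t_i}\vert_{T_v}$, and checking that $\psi$ satisfies the hypotheses of Lemma \ref{lemma:infinite_orbit} — is exactly the setup of the paper's proof, and you correctly locate the crux: the group acting on $T_v$ is only $\overline{K}$, and Corollary \ref{corollary:atomless_coloring} requires level-transitivity. The problem is that neither of your two proposed ways out closes this gap. First, level-transitivity of $K$ is not part of the definition of a regular branch group over $K$ (the paper's standing assumption of level-transitivity is on $\Gamma$, not on $K$), so you cannot assume it. Second, the fallback via rigid stabilizers does not deliver what the proof of Lemma \ref{lemma:infinite_orbit} consumes: weak branching only guarantees that $\Rst_K(u)$ is \emph{nontrivial}, with no control over \emph{which} vertices it moves, so there is no reason an element of $\Rst_K(u_{n_{i-1}})$ should carry the chosen blue descendant $w_{n_i}$ onto the green ray vertex $u_{n_i}$. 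As written, your argument establishes neither infinitude of the $K$-orbit of $\psi$ nor atomlessness.

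The paper's resolution sidesteps transitivity of $K$ entirely and is the missing idea here: apply Lemma \ref{lemma:infinite_orbit} to $\psi$ with the \emph{full} group $\Gamma$ acting on $T_v \cong T$ (which is level-transitive by the standing assumption and self-similarity), concluding that the $\Gamma$-orbit of $\psi$ is infinite; then use $[\Gamma : K] < \infty$ to deduce that the $K$-orbit of $\psi$ is also infinite, since
\[
[\Gamma : \Stab_{\Gamma}(\psi)] \le [\Gamma : K]\cdot [K : K \cap \Stab_{\Gamma}(\psi)].
\]
An infinite $K$-orbit (of $\psi$ and, by the same reasoning, of any $\overline{K}$-translate of $\psi$, which again satisfies the hypotheses of Lemma \ref{lemma:infinite_orbit}) is all that the atom-exclusion argument of Corollary \ref{corollary:atomless_coloring} actually uses: an atom would generate infinitely many disjoint atoms of equal mass under the $K$-invariant Haar translate. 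If you replace your transitivity discussion with this finite-index orbit comparison, the rest of your proof goes through.
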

\begin{proof}
Let $\varphi^{t_i}|_{T_v}$ denote the restriction of the coloring $\varphi^{t_i}$ to $T_v$. Since $\varphi^{t_i}|_{T_v}$ satisfies the assumptions of Lemma \ref{lemma:infinite_orbit}, its $\Gamma$-orbit is infinite. As $K$ is finite index in $\Gamma$, its $K$ orbit is also infinite. The random coloring $(\widetilde{\varphi} \vert \gamma_0=t_i)|_{T_v}$ is a translate of $\varphi^{t_i}|_{T_v}$ by a random element of $\overline{K}$ seen to be acting on $T_v$. As the $K$ orbit of $\varphi^{t_i}|_{T_v}$ is infinite, the random coloring $(\widetilde{\varphi} \vert \gamma_0=t_i)|_{T_v}$ is atomless.
\end{proof}

\begin{lemma} \label{lemma:branch_zero_probability}
Fix any isomorphism $f:V(T_v) \to V(T_{v'})$ between $T_v$ and $T_{v'}$ for some $v' \in \mathcal{L}_n$. Then the probability that $f$ respects the colorings we get by restricting $(\widetilde{\varphi} \vert \gamma_0=t_i)$ to $T_v$ and $T_{v'}$ respectively is 0. 

\[\mathbb{P} \Big[ \big((\widetilde{\varphi} \vert \gamma_0=t_i)\vert_{T_v}\big)^f = (\widetilde{\varphi} \vert \gamma_0=t_i)\vert_{T_{v'}} \Big]=0.\] 
\end{lemma}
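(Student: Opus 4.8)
The plan is to exploit two facts: that conditioned on $\gamma_0 = t_i$ the remaining randomness lives in $\overline{K^{d^n}}$, which splits as an \emph{independent} product over the level-$n$ subtrees, and that the piece living over $T_v$ is atomless by Lemma \ref{lemma:branch_atomless_coloring}. The measure-theoretic heart will then be the trivial-but-crucial observation that an atomless law charges no single point.

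First I would unpack the product structure. By the product-closure lemma of Section \ref{section:preliminaries} we have $\overline{K^{d^n}} = \overline{K}^{\,d^n}$, and since this group fixes $\mathcal{L}_n$ pointwise, each factor $\overline{K}$ acts only on the corresponding subtree $T_w$, $w \in \mathcal{L}_n$. Conditioned on $\gamma_0 = t_i$ we may write the Haar-random translating element as $\gamma = t_i k$ with $k = (k_w)_{w \in \mathcal{L}_n}$, where the components $k_w$ are independent, each Haar-distributed on $\overline{K}$ (Haar measure on a product being the product of Haar measures). Consequently $\widetilde{\varphi} = (\varphi^{t_i})^k$ restricted to $T_w$ equals $\big((\varphi^{t_i})\vert_{T_w}\big)^{k_w}$ and depends only on $k_w$, so the family $\big\{(\widetilde{\varphi}\mid\gamma_0=t_i)\vert_{T_w}\big\}_{w\in\mathcal{L}_n}$ is independent. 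In particular, writing $X=(\widetilde{\varphi}\mid\gamma_0=t_i)\vert_{T_v}$ and $Y=(\widetilde{\varphi}\mid\gamma_0=t_i)\vert_{T_{v'}}$, the colorings $X$ and $Y$ are independent whenever $v' \neq v$.

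Next I would invoke atomlessness. Since $v=u_n^{t_i}$, Lemma \ref{lemma:branch_atomless_coloring} says the law of $X$ is atomless on the compact space $\{r,g,b\}^{V(T_v)}$, i.e.\ every single coloring has probability $0$. The fixed isomorphism $f$ induces a bijection $c\mapsto c^f$ from $\{r,g,b\}^{V(T_v)}$ onto $\{r,g,b\}^{V(T_{v'})}$, and the pushforward of an atomless measure under a bijection is again atomless; hence $X^f$ is atomless. Conditioning on $Y$ and using independence,
\[
\mathbb{P}\big[X^f = Y\big] = \mathbb{E}\big[\,\mathbb{P}[\,X^f = Y \mid Y\,]\,\big] = 0,
\]
because for each fixed coloring $c$ one has $\mathbb{P}[X^f=c]=\mathbb{P}[X=c^{f^{-1}}]=0$ by atomlessness of $X$, so the inner conditional probability vanishes almost surely.

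The delicate points here are organizational rather than deep. One must check that conditioning on $\gamma_0=t_i$ genuinely leaves an honest product of independent Haar factors, which is exactly where $\overline{K^{d^n}}=\overline{K}^{\,d^n}$ and the pointwise fixing of $\mathcal{L}_n$ are used; and one must take $v'\neq v$. Indeed, for $v'=v$ the two restrictions share the single random factor $k_v$, are \emph{not} independent, and the probability can be positive (e.g.\ $f=\mathrm{id}$ gives probability $1$), so the statement is to be read for distinct subtrees $T_v, T_{v'}$. I do not expect any further obstacle, since once independence and atomlessness are in place the conclusion is immediate.
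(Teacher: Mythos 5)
Your proof is correct and follows essentially the same route as the paper's: the two coordinates $k_v$ and $k_{v'}$ of a Haar-random element of $\overline{K}^{\,d^n}$ are independent, the restricted random coloring over $T_v$ is atomless by Lemma \ref{lemma:branch_atomless_coloring}, atomlessness is preserved under the pushforward by $f$, and independence then forces the collision probability to vanish. Your observation that the statement must be read with $v'\neq v$ is a fair point, consistent with how the lemma is applied in Proposition \ref{proposition:fixedpointfree_stabilizer}.
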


\begin{remark}
In the formula above, all the randomness comes from the choice of $(\widetilde{\varphi} \vert \gamma_0=t_i)$. First, one generates a random instance of $(\widetilde{\varphi} \vert \gamma_0=t_i)$ and gets a coloring of $T$. Second, one  restricts this coloring to the two subtrees. Third, one checks whether $f$ maps one restriction to the other. We claim that this ($f$ mapping one restriction to the other) happens with probability zero.
\end{remark}

\begin{proof}
The restricted colorings $\varphi^{t_i} \vert_{T_v}$ and $\varphi^{t_i} \vert_{T_{v'}}$ are translated by the random elements $k_1, k_2 \in \overline{K}$ respectively. These $k_1$ and $k_2$ are independent since they are two coordinates of a Haar random element from $\overline{K}^{d^n}$. Furthermore we know from Lemma \ref{lemma:branch_atomless_coloring} that $(\varphi^{t_i} \vert_{T_v})^{k_1}$ is atomless, and hence $\big((\varphi^{t_i} \vert_{T_v})^{k_1}\big)^f$ is also atomless. This together with the independence of $k_1$ and $k_2$ implies that

\[ \mathbb{P}\Big[  \big((\varphi^{t_i} \vert_{T_v})^{k_1}\big)^f = (\varphi^{t_i} \vert_{T_{v'}})^{k_2}  \Big] = 0. \]
\end{proof}

\subsection{Proof of Theorem \ref{theorem:clopen}}

The idea of the proof is to show that the \emph{setwise} stabilizer of a Haar random translate $\widetilde{C}$ of a closed but not clopen subset $C$ has a fixed point in $\widetilde{C}$. With some considerations one can apply this to the orbit-closures of $H$, which are setwise stabilized by $H$. 

\begin{proposition} \label{proposition:fixedpointfree_stabilizer}
Let $\Gamma$ be a countable regular branch group over $K$. Suppose $C$ is a closed subset of $\partial T$, and as before $\widetilde{C}$ denotes the uniform $\overline{\Gamma}$-translate of $C$. Let $L \leq \Gamma$ denote the setwise stabilizer of $\widetilde{C}$ in $\Gamma$. If $C$ is not clopen, then $L$ has a fixed point in $\widetilde{C}$ almost surely.
\end{proposition}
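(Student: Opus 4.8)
The plan is to locate, inside the random set $\widetilde{C}$, a canonical boundary point that the whole setwise stabilizer $L = \Stab_{\Gamma}(\widetilde{C})$ is forced to fix. The natural candidate is the endpoint of a green ray. Since $C$ is not clopen, the coloring $\varphi = \varphi_C$ has green vertices on every level, and K\"onig's lemma produces an infinite green ray $R = (u_0, u_1, \ldots)$; as recorded before Lemma \ref{lemma:infinite_orbit}, every $u_i$ carries a blue descendant. Because $C$ is closed and each $\Sh(u_i)$ meets $C$, the endpoint $\xi = \lim_i u_i$ lies in $C$. First I would note that for each $n$ the restriction $\varphi|_{T_{u_n}}$ again satisfies the hypotheses of Lemma \ref{lemma:infinite_orbit}, so the branch lemmas apply to the subtree hanging at $u_n$.

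Next I pass to the random translate. Writing $\widetilde{C} = C^{\gamma}$ with $\gamma \in \overline{\Gamma}$ Haar-random, the translated ray $R^{\gamma} = (u_0^{\gamma}, u_1^{\gamma}, \ldots)$ is a green ray of the induced coloring $\varphi_{\widetilde{C}}$ with endpoint $\xi^{\gamma} \in \widetilde{C}$. The goal reduces to the claim that almost surely every $\sigma \in L$ fixes $\xi^{\gamma}$, and for this it suffices to show that $\sigma$ fixes each $u_n^{\gamma}$. Here I use that a setwise stabilizer of $\widetilde{C}$ preserves $\varphi_{\widetilde{C}}$: if $\sigma(\widetilde{C}) = \widetilde{C}$ then $\Sh(\sigma v) = \sigma\,\Sh(v)$ forces $\varphi_{\widetilde{C}}(\sigma v) = \varphi_{\widetilde{C}}(v)$. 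Consequently, if $\sigma$ moved $u_n^{\gamma}$ to some $v' \neq u_n^{\gamma}$, it would restrict to a color-preserving isomorphism $T_{u_n^{\gamma}} \to T_{v'}$.

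The heart of the argument is to rule this out with probability one. I condition on the coset $\gamma_0 = t_i$ of $\gamma$ modulo $\overline{K^{d^n}}$, which fixes the coloring on the first $n$ levels and pins $u_n^{\gamma} = u_n^{t_i} =: v$; then $v' = \sigma(v)$ is determined and $\sigma|_{T_v}$ is a fixed isomorphism $T_v \to T_{v'}$. Since $\varphi|_{T_{u_n}}$ satisfies the hypotheses of Lemma \ref{lemma:infinite_orbit}, Lemma \ref{lemma:branch_zero_probability} applies and shows that the event ``$\sigma|_{T_v}$ respects the restricted colorings'' has conditional probability $0$ whenever $v' \neq v$. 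Averaging over the finitely many cosets $t_i$ keeps the probability $0$, and then I take a union over the countably many pairs $(\sigma, n)$ with $\sigma \in \Gamma$: the event that some $\sigma \in L$ displaces some $u_n^{\gamma}$ is a countable union of null events, hence null. On its complement every $\sigma \in L$ fixes all $u_n^{\gamma}$, hence fixes $\xi^{\gamma} \in \widetilde{C}$, proving the proposition.

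I expect the main obstacle to be the measure-theoretic bookkeeping in the last step: one must ensure that Lemma \ref{lemma:branch_zero_probability}, phrased for a single fixed isomorphism and a single conditioning coset, can be summed over all $\sigma \in \Gamma$ and all levels $n$ without the conditioning becoming circular. The key point is that $\Gamma$ is countable and that conditioning on $\gamma_0$ renders both $u_n^{\gamma}$ and the tested isomorphism $\sigma|_{T_v}$ deterministic, so that the residual $\overline{K^{d^n}}$-randomness is exactly the atomless, independent randomness exploited in Lemmas \ref{lemma:branch_atomless_coloring} and \ref{lemma:branch_zero_probability}.
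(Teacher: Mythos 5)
Your proposal is correct and follows essentially the same route as the paper: the green ray produced by K\"onig's lemma supplies the candidate fixed point, the coset conditioning $\gamma_0=t_i$ pins down $u_n^{\gamma}$ and the tested isomorphism, and Lemma \ref{lemma:branch_zero_probability} kills each event for a fixed $\sigma$, after which countability of $\Gamma$ finishes the job. Your only (harmless, in fact slightly cleaner) deviation is that you decompose the bad event as a countable union over levels $n$ rather than, as the paper does, arguing separately that the probability of $\widetilde{x}_0$ being moved without this being witnessed on $\mathcal{L}_n$ tends to $0$.
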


\begin{remark}
As both $L$ and $\widetilde{C}$ are random objects, there might be ambiguity in what one means by $L$ having a fixed point in $\widetilde{C}$. Note that $\widetilde{C}$ is generated randomly, but $L$ is defined deterministically once $\widetilde{C}$ is chosen. The statement is to be understood for almost all random instances of $\widetilde{C}$.
\end{remark}

\begin{proof} Associate the coloring $\varphi: V(T) \to \{r,g,b\}$ to $C$ as before: vertices with shadows contained in $C$ are colored red, vertices with shadows in the complement are colored blue, everything else is colored green. As automorphisms move the set $C$ the coloring moves with it. 

Choose a point $x_0 \in \partial T$ which is on the boundary of $C$, that is $x_0 \in C \setminus \mathrm{int}(C)$. Being a boundary point means that every vertex on the path $(u_0, u_1, \ldots)$ corresponding to $x_0$ is green, and we can find a blue vertex among the descendants of $u_i$ for all $i$.

Let $\widetilde{C}=\gamma C$, $\widetilde{\varphi}=\varphi^{\gamma}$ and $\widetilde{x}_0=x_0^{\gamma}$, where $\gamma \in \overline{\Gamma}$ is a uniform random element. While $\widetilde{C}$, $\widetilde{\varphi}$ and $\widetilde{x}_0$ are random objects, they are strongly dependent as they are obtained using the same $\gamma$.

Fix an element $\eta \in \Gamma$. We will study the probability that $\eta$ stabilizes $\widetilde{C}$ and does not fix $\widetilde{x}_0$, and conclude that it is 0. If $\eta$ stabilizes $\widetilde{C}$ setwise then it preserves $\widetilde{\varphi}$. 

First we present our argument in the finitary case to illustrate the key idea, then explain how to deal with the non-finitary case.

\vspace{1 mm}
\noindent {\bf Finitary case.} Assume $\eta$ is finitary, that is we can find a level $n$ with vertices $\mathcal{L}_n= \{v_1, \ldots, v_{d^n}\}$ such that $\eta$ moves the subtrees $T_{v_i}$ hanging off the $n^{\textrm{th}}$ level rigidly. The condition that $\widetilde{x}_0$ is moved has to be witnessed on $\mathcal{L}_n$. Assume $v_1, \ldots, v_l$ are moved by $\eta$ and $v_{l+1}, \ldots, v_{d^n}$ are fixed.

Let us assume that $\widetilde{\varphi}$ is preserved by $\eta$, and the ray corresponding to $\widetilde{x}_0$ is moved by $\eta$. Then $\widetilde{x}_0 \cap \mathcal{L}_n = v_i$ for some $i \leq l$ with $v_j = \eta v_i \neq v_i$. We condition on $v_i$ and claim that 
\[
\mathbb{P}\big[(\widetilde{\varphi}\vert_{T_{v_i}})^{\eta} = \widetilde{\varphi}\vert_{T_{v_j}} ~\big|~ \widetilde{x}_0 \cap \mathcal{L}_n = v_i \big] = 0.
\]


In the case of $\Gamma = \Alt_f (T)$ this is an easy consequence of Corollary \ref{corollary:atomless_coloring}, because the two restrictions are independent.  
There are finitely many choices of $v_i$, so the probability of moving $\widetilde{x}_0$ while stabilizing $\widetilde{C}$ is 0.

In the general case when $\Gamma$ is a regular branch group we condition on $\gamma_0 =t_i$ as in Lemma \ref{lemma:branch_zero_probability}, and with $f$ the canonical isomorphism between $T_{v_i}$ and $T_{v_j}$ we conclude that the conditional probability of $\eta$ preserving $(\widetilde{\varphi} \vert \gamma_0=t_i)$ is 0. There are finitely many choices for $t_i$, so again we conclude that the probability of moving $\widetilde{x}_0$ while stabilizing $\widetilde{C}$ is 0. 

As $\Gamma$ is countable this means that with probability 1 the whole setwise stabilizer of $\widetilde{C}$ fixes $\widetilde{x}_0$.

\vspace{1 mm}
\noindent {\bf Non-finitary case.} When $\eta$ is not finitary there are two points where the previous argument has to be adapted:
\begin{enumerate}[i)]
\item \label{problem:rigid} the trees $T_{v_i}$ are not moved rigidly;
\item \label{problem:witness} the $n^{\textrm{th}}$ level might not witness that $\widetilde{x}_0$ is moved by $\eta$. 
\end{enumerate}

Notice however that \ref{problem:rigid}) is not a real problem. The group element $\eta$ is fixed, and it induces an isomorphism $f=\eta \vert_{T_{v_i} \to T_{v_j}}$. We can refer to Lemma \ref{lemma:branch_zero_probability} as we did in the finitary case.  In fact this is why we needed to allow an arbitrary (but fixed) $f$ in Lemma \ref{lemma:branch_zero_probability}.

To work our way around \ref{problem:witness}) we notice that 
\begin{eqnarray}
\mathbb{P}\big[\widetilde{x}_0^{\eta} \neq \widetilde{x}_0, \textrm{ but } \widetilde{\varphi}^{\eta}=\widetilde{\varphi}\big] & \leq & \mathbb{P}[\widetilde{x}_0^{\eta} \neq \widetilde{x}_0 \textrm{ and } (\widetilde{x}_0 \cap \mathcal{L}_n)^{\eta} = (\widetilde{x}_0 \cap \mathcal{L}_n)] +  \nonumber \\
&  & + \ \mathbb{P}\big[(\widetilde{x}_0 \cap \mathcal{L}_n)^{\eta} \neq (\widetilde{x}_0 \cap \mathcal{L}_n) \textrm{ and } \widetilde{\varphi}^{\eta}=\widetilde{\varphi}\big] \label{eqn:prob_bound}.
\end{eqnarray}

The argument we presented in the finitary case show that 
\[\mathbb{P}\big[\widetilde{\varphi}^{\eta}=\widetilde{\varphi} ~\big|~ (\widetilde{x}_0 \cap \mathcal{L}_n)^{\eta} \neq (\widetilde{x}_0 \cap \mathcal{L}_n)\big] =0,\] 
so the second term in (\ref{eqn:prob_bound}) is 0 for all $n \in \N$.

On the other hand the first term in (\ref{eqn:prob_bound}), the probability of $\widetilde{x}_0$ being moved by $\eta$ but this not being witnessed on $\mathcal{L}_n$ tends to 0 as $n \to \infty$. Indeed, $\Fix(\eta) = \bigcap_{n \in \N} \Sh \big(\Fix_{\mathcal{L}_n}(\eta)\big)$, and this intersection is decreasing. Therefore \[\mathbb{P}[\widetilde{x}_0^{\eta} \neq \widetilde{x}_0 \textrm{ and } (\widetilde{x}_0 \cap \mathcal{L}_n)^{\eta} = (\widetilde{x}_0 \cap \mathcal{L}_n)]= \mathbb{P}\big[\widetilde{x}_0 \in \Sh\big(\Fix_{\mathcal{L}_n}(\eta)\big) \setminus \Fix(\eta) \big] \to 0.\] 

Consequently we get $\mathbb{P}\big[\widetilde{x}_0^{\eta} \neq \widetilde{x}_0, \textrm{ but } \widetilde{\varphi}^{\eta}=\widetilde{\varphi}\big] =0$. This finishes  the proof because $\Gamma$ is countable.
\end{proof}

\medskip

\begin{proof}[Proof of Theorem \ref{theorem:clopen}]
 By ergodicity and Lemma \ref{lemma:unique} we know that there exists a $P\in\mathcal{O}$, such that $\widetilde{P}$ has the same distribution as $\mathcal{O}_H$. Let us choose a closed set $C$ which is not a single point from the partition $P$. We aim to use Proposition \ref{proposition:fixedpointfree_stabilizer} to conclude that $C$ is clopen. For that we will couple $H$ and $\widetilde{C}$ such that $H \leq L$ holds almost surely, where $L$ is the setwise stabilizer IRS of $\widetilde{C}$. Then $H$ moving all points of $C$ implies the same for $L$, which then through Proposition \ref{proposition:fixedpointfree_stabilizer} implies that $C$ is clopen.

Let $X=(P,C)\in\mathcal{O}\times\mathcal{C}$. Consider the diagonal action of $\overline{\Gamma}$ on $\mathcal{O}\times\mathcal{C}$. Let $\widetilde{X}$ be the Haar random translate of $X$. This way we obtained that the first coordinate of $\widetilde{X}$ has the same distribution as $\mathcal{O}_H$, the second coordinate has the same distribution as $\widetilde{C}$, and the second coordinate is always a closed subset in the partition given by the first coordinate. 

Now we use the transfer theorem (see Theorem 6.10. of \cite{kallenberg2002foundations}) to obtain a random element $C_H$ of $\mathcal{C}$, such that 
$(\mathcal{O}_H,C_H)\stackrel{d}{=}\widetilde{X}$. (Recall that $Y \stackrel{d}{=} Z$ stands for the two random variables $Y$ and $Z$ having the same distribution.) The first coordinate of $\widetilde{X}$ always contains the second, therefore $C_H\in\mathcal{O}_H$ and clearly $C_H\stackrel{d}{=}\widetilde{C}$. Choosing $L$ to be the setwise stabilizer of $C_H$ concludes the proof.
\end{proof}

\section{IRS's in regular branch groups} \label{section:main_theorem}

Our goal is to understand all ergodic IRS's $H$ of $\Gamma$. Let $\widetilde{C} = \Fix(H)$. Lemmas \ref{lemma:closure_invariant} and \ref{lemma:unique} tell us that $\widetilde{C}$ is the $\gamma$ translate of a fixed closed subset $C \subseteq \partial T$, where $\gamma \in \overline{\Gamma}$ is Haar random. First we exhibit some concrete examples which are worth to keep in mind and to motivate the decomposition of the tree in Subsection \ref{subsection:decomposition}. We study the action of $H$ on the parts in Subsection \ref{subsection:action_of_H}. The last two subsections contain the proof of the main theorem of the paper.

\subsection{Examples}
We show a few examples to keep in mind. For simplicity let $d=5$, and $\Gamma= \Alt_f(T)$. Recall that in this group the normal subgroups are the level stabilizers $\Stab_{\Gamma}(\mathcal{L}_n)$, and the quotients are the finite groups $A_d^{\mathrm{wr}(n)}$. Recall that $A_d^{\mathrm{wr}(n)}$ stands for the $n$-fold iterated permutational wreath product of $A_d$, see Subsection \ref{subsec:aut_of_rooted_trees}.

\begin{example} \label{example:fixed_point_free_IRS}
Pick $n \in \N$, and a finite subgroup $L \leq A_d^{\mathrm{wr}(n)}$. Let $\widetilde{L}$ be the uniform random conjugate of $L$ in $A_d^{\mathrm{wr}(n)}$, and $H$ be the preimage of $\widetilde{L}$ under the quotient map, that is $H= \widetilde{L} \cdot \Stab_{\Gamma}(\mathcal{L}_n)$. Then $H$ is an ergodic fixed point free IRS of $\Gamma$. Note that this construction also works if $G$ is only eventually $d$-ary, i.e.\ vertices on the first few levels might have different number of children.
\end{example}

Theorem \ref{theorem:alternating_finitary_ffp} states that all ergodic fixed point free IRS of $\Alt_f(T)$ are listed in Example \ref{example:fixed_point_free_IRS}. We give a very broad outline of the proof for this case in the hope that it makes the subsequent proof of the stronger Theorem \ref{theorem:finitary} more transparent and motivates Proposition \ref{proposition:few_exceptions} that we state beforehand.

\medskip
\begin{proof}[Outline of proof of Theorem \ref{theorem:alternating_finitary_ffp}]
By Theorem \ref{theorem:clopen} we know that an ergodic fixed point free IRS $H$ has finitely many clopen orbit-closures on the boundary. A deep enough level $\mathcal{L}_{k_0}$ witnesses this partition into clopen sets, that is parts in this partition are of the form $\bigcup_{v \in I} \Sh(v)$ where $I \subseteq \mathcal{L}_{k_0}$. Moreover, $H$ acts transitively on the different parts on each $\mathcal{L}_n$ with $n \geq k_0$. 

This means that we can find some elements $\gamma_1, \ldots, \gamma_l \in \Alt_f(T)$ such that $\mathbb{P}\big[\{\gamma_1, \ldots, \gamma_l\} \subseteq H \big]>0$, and (conditioned on $\{\gamma_1, \ldots, \gamma_l\} \subseteq H$) the $\gamma_i$ generate the $H$-orbits on $\mathcal{L}_{k_0}$. Since $\Alt_f(T)$ is finitary, we can choose some ($k \geq k_0$) such that the $\gamma_i$ have no vertex permutations below $\mathcal{L}_k$. 

We then study $\mathbb{P}\big[\{\gamma_1, \ldots, \gamma_l\} \subseteq H' \big]>0$ for an ergodic IRS $H'$ in the finite group $A_d^{\mathrm{wr}(n)}$, with $n \geq k$. Then $H'$ is the uniform random conjugate of some fixed subgroup $L \leq A_d^{\mathrm{wr}(n)}$, and $L$ has to contain many conjugates of the set $\{\gamma_1, \ldots, \gamma_l\}$. One can show that, provided $n$ is sufficiently large, this implies that $L$ contains a whole level stabilizer $\Stab_{A_d^{\mathrm{wr}(n)}}(\mathcal{L}_m)$ for some $m\geq k$, where $m$ does not depend on the choice of $n$.

Using that $\Alt_f(T)$ is the union of the $A_d^{\mathrm{wr}(n)}$, together with some additional analysis of ergodic components, one can show that actually $H$ contains $\Stab_{\Alt_f(T)}(\mathcal{L}_m)$ almost surely. 
\end{proof}

\begin{example} \label{example:general_IRS}
Pick a random point $x \in \partial T$, this will be the single fixed point of the IRS $H$. Deleting the edges of the ray $(u_0, u_1, \ldots)$ corresponding to $x$ from $T$ we get infinitely many disjoint trees, where the roots $u_n$ have degree $4$, while the rest of the vertices have $5$ children. Pick any fixed point free IRS for each of these trees as in example \ref{example:fixed_point_free_IRS}, randomize them independently and take their direct sum to be $H$. This construction works with other random fixed point sets instead of a single point as well. 
\end{example}

\begin{example} \label{example:coupling}
A modification of the previous example is the following. Let $x \in \partial T$ be random as before, and do the exact same thing for all the trees hanging of the ray $(u_0, u_1, \ldots)$ except for the first two, $T_1$ and $T_2$ rooted at $u_0$ and $u_1$ respectively. The finitary alternating automorphism groups of these trees are $\Alt_f(T) \wr A_4$. Now pick an (ergodic) fixed point free IRS of the finitary alternating and bi-root-preserving automorphism group of $T_1 \cup T_2$, which is $(\Alt_f(T) \wr A_4) \times (\Alt_f(T) \wr A_4)$, and use this to randomize $H$ on $T_1 \cup T_2$. We will show that this is different from the previous examples. When we pick an IRS of $(\Alt_f(T) \wr A_4) \times (\Alt_f(T) \wr A_4)$ we pick some $n \in \N$, assume that the stabilizers of the $n^{\mathrm{th}}$ levels in $T_1$ and $T_2$ are in the IRS, and pick a random conjugate of some $L \leq (\Alt_f(T) \wr A_4) \times (\Alt_f(T) \wr A_4)$ to extend the stabilizer. If we pick for example $L=\big\{(\gamma, \gamma) \mid \gamma \in (\Alt_f(T) \wr A_4) \}$, then the IRS we construct will not be the direct product of IRS's on the two components, because the ``top'' parts of the subgroups are coupled together. Taking a random conjugate of $L$ makes the coupling random as well, but nonetheless in every realization of $H$ there is some nontrivial dependence between the actions of $H$ on $T_1$ and $T_2$. 
\end{example}

\subsection{Decomposition of $T$} \label{subsection:decomposition}

To the set of fixed points $\widetilde{C}$ we associate a subtree $T_{\widetilde{C}}$,  the union of all the rays corresponding to the points of $\widetilde{C}$. Note that $\widetilde{C}$ is random (as it depends on $H$), but the construction of $T_{\widetilde{C}}$, given $\widetilde{C}$, is deterministic. So both $\widetilde{C}$ and $T_{\widetilde{C}}$ are random objects, but they are both deterministic given $H$.

All elements of $H$ fix all vertices of the tree $T_{\widetilde{C}}$, so understanding $H$ requires us to focus on the rest of $T$. We will decompose $T$ according to the subtree $T_{\widetilde{C}}$. Note that the following decomposition is slightly different to the one in the introduction as it is easier to work with.

On $\mathcal{L}_n$ denote the set of fixed vertices $F_n =V( T_{\widetilde{C}}) \cap \mathcal{L}_n$. Remove all edges $E(T_{\widetilde{C}})$ from $T$, the remaining graph $T'$ is a union of trees.  

\begin{figure}[h]
   \centering
   \includegraphics[width=12.5cm]{./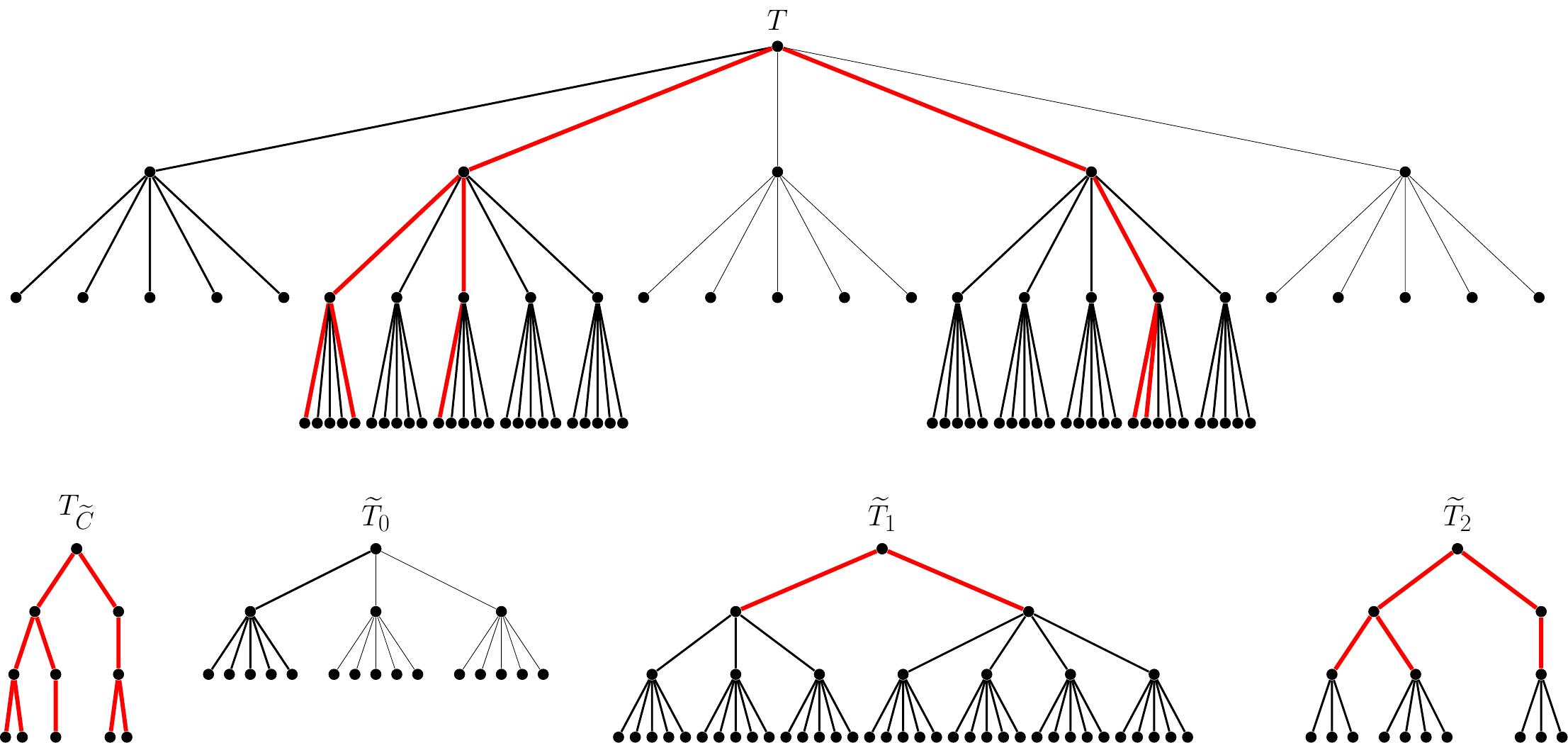}
   \caption{Decomposition of $T$ with respect to $\widetilde{C}$}
   \label{fig:trees}
\end{figure}

Let $\widetilde{T}_0$ be the connected component of $T'$ containing the root of $T$. In other words it is the tree starting at the root in $T'$. In general let $\widetilde{T}_n$ be constructed as follows. The first $n$ levels on $\widetilde{T}_n$ will be the same as the first $n$ levels of $T_{\widetilde{C}}$, and beyond that select the connected components of $T'$ containing the vertices of $F_n$. The vertices of $\widetilde{T}_n$ are exactly the vertices of $T$ that can be reached from the root by taking $n$ steps in $T_{\widetilde{C}}$ and then some number of steps in $T'$. See Figure \ref{fig:trees}. 

The boundary $\partial T$ decomposes as well. Clearly $\partial T_{\widetilde{C}} = \widetilde{C}$, and \[\partial T = \widetilde{C} \cup \partial \widetilde{T}_0 \cup \partial \widetilde{T}_1 \cup \ldots\]

Each $\partial \widetilde{T}_i$ is $H$-invariant, and a clopen and therefore compact subset of $\partial T$. It is the union of clopen orbit-closures from $\mathcal{O}_H$ because of Theorem \ref{theorem:clopen}, so it is the union of fintely many. 

In the remaining part of this section we will prove the following. Fix an equivalence class $[C']$ that appears in $\mathcal{O}_H$ with probability 1.

\begin{remark}
Recall that $[C']$ denotes the equivalence class of a closed $C' \subseteq \partial T$ under the equivalence relation $C_1 \sim C_2 \Leftrightarrow \exists~\gamma \in \overline{\Gamma},~ C_1^{\gamma} = C_2$. The orbit-closure partition $\mathcal{O}_H$ is random, but the equivalence classes of orbit-closures appearing in $\mathcal{O}_H$ are the same fixed collection for almost every realization of $\mathcal{O}_H$, because $\mathcal{O}_H$ is ergodic.
\end{remark}

We claim that there exists $m^* \in \N$ such that for almost every $H$ the following holds: for every $C \in \mathcal{O}_H$, $C \sim C'$ there exists a subset $C_{m^*} \subseteq \mathcal{L}_{m^*}$ with $\Sh(C_{m^*}) = C$ such that $\Rst_{\Gamma}'(C_{m^*}) \leq H$. Note that $m^*$ depends on $[C']$, but not on the realization of $H$. 



Applying the claim above for the finitely many orbit-closures $C$ that constitute $\partial \widetilde{T}_i$ and taking a maximum of the $m^*$-s yields that for some $m_i \geq i$ we have $\Rst_{\Gamma}'\big(\mathcal{L}_{m_i}(\widetilde{T}_i)\big) \subseteq H$. Knowing this for all $i$ yields
\[\bigoplus_{i \in \N} \Rst_{\Gamma}'\big(\mathcal{L}_{m_i}(\widetilde{T}_i)\big) \subseteq H,\]
which is equivalent to the statement of Theorem \ref{theorem:finitary}.

\subsection{The action of $H$ on the $\widetilde{T}_i$} \label{subsection:action_of_H}

Before we turn to proving Theorem \ref{theorem:finitary} we argue that all IRS's resemble the previous examples in the sense that their projections on the $\widetilde{T}_n$ are fixed point free IRS's in $\Stab_{\Gamma}(\widetilde{T}_n)$. 

While the $\widetilde{T}_n$ are random, the isomorphism type of each $\widetilde{T}_n$ is always the same because of ergodicity, and $\widetilde{T}_n$ can appear in finitely many $\Gamma$-equivalent ways in $T$. Let $T^1_n, T^2_n, \ldots T^{l(n)}_n$ denote the possible realizations of $\widetilde{T}_n$, and note that $\P[\widetilde{T}_n = T^i_n ]$ is the same for all $i \in \{1, \ldots , l(n)\}$.

Let $\varphi_n: H \to \Stab_{\Gamma}(\widetilde{T}_n)$ denote the restriction function:

\[\varphi_n (h) = h \vert_{\widetilde{T}_n}.\]

The function $\varphi_n$ is also random, but it only depends on $\widetilde{T}_n$, so once we condition $H$ on $\widetilde{T}_n$ the function $\varphi_n$ is well defined.

\begin{proposition} \label{proposition:action_on_T_i}
The random subgroup $\varphi_n \big( (H \mid \widetilde{T}_n = T^i_n) \big)$ is a fixed point free IRS in $\Stab_{\Gamma}(T^i_n)$.
\end{proposition}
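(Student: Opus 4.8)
The plan is to unpack the statement into its two halves and treat them separately: first that $(\varphi_n)_*\big(H \mid \widetilde{T}_n = T^i_n\big)$ is conjugation-invariant under the ambient group, so that it is an IRS of $\Stab_{\Gamma}(T^i_n)$, and second that it has no fixed points on $\partial \widetilde{T}_n$. Before either, I would record that $\varphi_n$ genuinely lands where claimed. Every $h \in H$ fixes $\Fix(H)=\widetilde{C}$ pointwise, hence fixes the whole subtree $T_{\widetilde{C}}$ and in particular each vertex of $F_n = V(T_{\widetilde{C}}) \cap \mathcal{L}_n$. Since $h$ fixes $T_{\widetilde{C}}$ it preserves the edge set $E(T_{\widetilde{C}})$ and therefore preserves $T' = T \setminus E(T_{\widetilde{C}})$ as a graph; mapping the $T'$-component at $v \in F_n$ to the component at $h(v)=v$, it preserves each such component, and so it preserves $\widetilde{T}_n$ setwise. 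Consequently, on the event $A = \{\widetilde{T}_n = T^i_n\}$ the restriction $\varphi_n$ is a well-defined homomorphism into the group of automorphisms of $T^i_n$ realized by $\Gamma$ (restriction to an invariant subtree being a homomorphism on a setwise stabilizer), which is exactly the ambient group $\Stab_{\Gamma}(T^i_n)$ in the sense in which $\varphi_n$ was defined.

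For invariance I would run the standard ``conditioning on an invariant event'' argument. Write $\mu$ for the law of $H$ and $G_0 = \Stab_{\Gamma}(T^i_n)$ for the setwise stabilizer. The decomposition is conjugation-equivariant: from $\Fix(H^g) = g^{-1}\Fix(H)$ one gets $\widetilde{T}_n(H^g) = g^{-1}\widetilde{T}_n(H)$, so for $g \in G_0$, which satisfies $g^{-1}T^i_n = T^i_n$, the event $A$ is $G_0$-invariant. As $\mu$ is $\Gamma$-invariant and $G_0 \leq \Gamma$, the conditional measure $\mu(\,\cdot \mid A)$ is $G_0$-conjugation-invariant. Finally $\varphi_n$ intertwines the two conjugation actions, $\varphi_n(H^g) = \varphi_n(H)^{\varphi_n(g)}$, because restriction is a homomorphism on $G_0$; pushing $\mu(\,\cdot \mid A)$ forward therefore gives a measure invariant under conjugation by $\varphi_n(G_0)$, and $\varphi_n(G_0)$ is by construction all of $\Stab_{\Gamma}(T^i_n)$. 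Measurability of $H \mapsto \varphi_n(H)$ is routine and can be handled exactly as in Lemmas \ref{lemma:measurable_fixedpoint_set} and \ref{lemma:measurable_orbit_closures}.

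Fixed-point-freeness is then essentially structural. The action of $H$ on the invariant clopen set $\partial \widetilde{T}_n$ depends only on the restriction to $\widetilde{T}_n$, so a point $x \in \partial \widetilde{T}_n$ is fixed by $\varphi_n(H)$ if and only if it is fixed by $H$, that is, if and only if $x \in \Fix(H) = \widetilde{C}$. But every ray of $\partial \widetilde{T}_n$ leaves $T_{\widetilde{C}}$ immediately after level $n$: its first edge below $F_n$ is a $T'$-edge and hence not an edge of $T_{\widetilde{C}}$, so the corresponding level-$(n+1)$ vertex lies outside $T_{\widetilde{C}}$. Thus $\partial \widetilde{T}_n \cap \widetilde{C} = \emptyset$, and $\varphi_n(H)$ has no fixed points on $\partial T^i_n$ in every realization, a fortiori almost surely.

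The only genuinely delicate point, and where I expect to spend the most care, is the conditioning/equivariance bookkeeping of the second paragraph: checking that $A$ has positive probability (it does, being one of the $l(n)$ equally likely realizations recorded in the text preceding the proposition) and is precisely $G_0$-invariant, and that the restriction homomorphism correctly identifies $\varphi_n(G_0)$ with the ambient group in which the IRS is asserted to live. Everything else—the well-definedness of $\varphi_n$ on $A$ and the disjointness $\partial \widetilde{T}_n \cap \widetilde{C} = \emptyset$—follows directly from the construction of the decomposition together with $\Fix(H) = \widetilde{C}$, and uses none of the measure-theoretic input from the earlier sections.
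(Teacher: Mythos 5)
Your argument is correct and is essentially the paper's own proof, just written out in more detail: the paper likewise observes that the event $\{\widetilde{T}_n = T^i_n\}$ is invariant under conjugation by the setwise stabilizer $\Stab_{\Gamma}(T^i_n)$, so that conditioning preserves invariance, and deduces fixed point freeness from the fact that all fixed points of $H$ lie in $T_{\widetilde{C}}$, which is disjoint from $\partial\widetilde{T}_n$. The extra care you take with the well-definedness of $\varphi_n$ and the intertwining of the conjugation actions fills in steps the paper leaves implicit, but introduces no new ideas.
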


\begin{proof}
For a fixed subgroup $L \leq \Gamma$ let $T_n(L)$ denote the deterministic subtree defined the same way as $\widetilde{T}_n$ was for $H$.
The set $\{L \leq \Gamma \mid T_n(L) = T^i_n\}$ is invariant under the conjugation action of $\Stab_{\Gamma}(T^i_n) \leq \Gamma$, so the invariance of the random subgroup $H$ implies the invariance of the conditioned subgroup $(H \mid \widetilde{T}_n = T^i_n)$. This IRS is fixed point free because all fixed points of $H$ are in $T_{\widetilde{C}}$. 
\end{proof}

\begin{remark}
One might be tempted to prove the more general Theorem \ref{theorem:finitary} by first proving the more transparent fixed point free case and then using Proposition \ref{proposition:action_on_T_i} on the individual subtrees, where $H$ acts fixed point freely. However, we do not see this approach to work. Instead with some mild additional technical difficulties we present the proof for the more general case. 
\end{remark}

\subsection{IRS's in finite subgroups of $\Gamma$}
Let $\Gamma_{n}$ stand for the subgroup consisting of elements of $\Gamma$ that only have nontrivial vertex permutations above $\mathcal{L}_n$.

\begin{lemma}\label{lemma:index_bound}
For $n$ large enough we have $[\Gamma_n : (K \cap \Gamma_n) ] \leq [\Gamma : K]$.
\end{lemma}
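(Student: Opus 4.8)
The plan is to recognize the claimed inequality as an instance of the elementary index inequality $[A : A \cap B] \le [G : B]$, valid for arbitrary subgroups $A, B$ of a group $G$, applied with $A = \Gamma_n$, $B = K$ and $G = \Gamma$. The one point that is not completely formal, and which I would check first, is that $\Gamma_n$ really is a subgroup of $\Gamma$. This follows from the composition rule for portraits: writing $(w)\gamma$ for the vertex permutation of $\gamma$ at $w$, one has $(w)(gh) = (w)g \cdot (w^g)h$, and since automorphisms preserve levels, both a product and an inverse of elements whose nontrivial vertex permutations all sit above $\mathcal{L}_n$ again have this property. Hence $\Gamma_n \le \Gamma$.

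Granting this, the bound is immediate: the assignment $g(\Gamma_n \cap K) \mapsto gK$ is a well-defined injection of $\Gamma_n / (\Gamma_n \cap K)$ into $\Gamma / K$ (both well-definedness and injectivity use only that $\Gamma_n \cap K \le K$ together with $g \in \Gamma_n$), so $[\Gamma_n : \Gamma_n \cap K] \le [\Gamma : K]$. This argument is uniform in $n$, so the inequality in fact holds for every $n$; I would therefore expect the hypothesis ``$n$ large enough'' to be dictated by the later use of the lemma rather than by the bound itself.

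The feature that makes the statement worth isolating is that, although $\Gamma_n$ grows without bound (for $\Gamma = \Alt_f(T)$ it is $A_d^{\mathrm{wr}(n)}$, of doubly-exponential order), its intersection with $K$ keeps pace so that the index stays pinned below the fixed number $[\Gamma : K]$. The regular branch structure is what makes this transparent from the inside: since $K$ contains $K^{d^n}$ acting coordinatewise on the level-$n$ subtrees, $K \cap \Gamma_n$ automatically inherits a large ``deep'' part. One can package the same estimate through the finite quotient $Q_n = \Gamma / \Stab_{\Gamma}(\mathcal{L}_n)$: the restriction map $\rho_n$ is injective on $\Gamma_n$, because a top-supported automorphism fixing $\mathcal{L}_n$ is trivial, and $[Q_n : \rho_n(K)] = [\Gamma : K\,\Stab_{\Gamma}(\mathcal{L}_n)] \le [\Gamma : K]$.

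If one pursues this branch-theoretic route rather than the one-line coset embedding, the main obstacle is the comparison $\rho_n(K \cap \Gamma_n) = \rho_n(K) \cap \rho_n(\Gamma_n)$: a priori a top-level action realized both by some $k \in K$ and by some top-supported $t \in \Gamma$ need not be realized by a top-supported element of $K$, and it is exactly in controlling this that ``$n$ large enough'' and the branching hypothesis would enter. Since the direct embedding of the preceding paragraph sidesteps this difficulty entirely, I would present the elementary argument as the proof and retain the branch picture only as motivation for why $[\Gamma : K]$ is the natural ceiling.
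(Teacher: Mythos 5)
Your proof is correct, and it takes a slightly different and more elementary route than the paper's. The paper fixes once and for all a transversal for $K$ in $\Gamma$, uses the finitariness of $\Gamma$ to choose $n$ so large that every transversal element lies in $\Gamma_n$, and then observes that the translates of $K \cap \Gamma_n$ by this transversal cover $\Gamma_n$ (for $g \in \Gamma_n$ with $g = t_i k$, one has $k = t_i^{-1} g \in K \cap \Gamma_n$ precisely because $t_i \in \Gamma_n$). You instead invoke the universal injection $\Gamma_n/(K \cap \Gamma_n) \hookrightarrow \Gamma/K$, which uses neither the finitariness of $\Gamma$ nor any largeness of $n$; your remark that the hypothesis ``$n$ large enough'' is superfluous for the inequality itself is accurate, and indeed all that the later application (Proposition \ref{proposition:few_exceptions}) needs is a bound independent of $n$, which both arguments supply. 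Your preliminary check that $\Gamma_n$ is a subgroup, via the composition rule for vertex permutations and the fact that automorphisms preserve levels, is a legitimate point to verify and is implicitly assumed in the paper. The branch-theoretic detour in your last two paragraphs is correctly flagged as motivation only and plays no role in the argument you actually present, so it does not affect correctness.
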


\begin{proof}
Fix a transversal for $K$. All elements in the transversal are finitary, so choose $n$ such that all are supported above $\mathcal{L}_n$. Then the translates of $(K \cap \Gamma_n)$ with this transversal cover $\Gamma_n$.
\end{proof}

\medskip

Let $\gamma \in \Gamma$, and $v \in \mathcal{L}_k$. The \emph{section} of $\gamma$ at $v$ is the automorphism $[\gamma]_v$ we get by restricting the portrait of $\gamma$ to the rooted subtree $T_v$ consisting of $v$ and its descendants. That is, the vertex permutations of $[\gamma]_v$ are defined by $(u)[\gamma]_v =(u)\gamma$ for every $u \in T_v$ and $(u)[\gamma]_v = \textrm{id}$ for every $u \notin T_v$. (Recall that $(w)\gamma$ denotes the vertex permutation of $\gamma$ at the vertex $w$, see Subsection \ref{subsec:aut_of_rooted_trees}.) We think of $[\gamma]_v$ as the automorphism on $T_v$ carried out by $\gamma$ before all the vertex permutations above the level $\mathcal{L}_k$ take place. 

Suppose $s \in \Gamma_{k}$, and let $L \subseteq \Gamma_{n}$ where $k < n$.  Let $\widetilde{L}$ denote the uniform random $\Gamma_{n}$-conjugate of $L$, which is an  IRS of $\Gamma_{n}$. Furthermore, assume that $\P[s \in \widetilde{L}] \geq c >0$, which is equivalent to 
\[\frac{\big|\{\gamma \in \Gamma_n \mid s^{\gamma} \in L \}\big|}{|\Gamma_n|} \geq c.\]

 Let $R \subseteq \Gamma_{n}$ be a transversal for the subgroup $\Rst_{\Gamma_n}(\mathcal{L}_k)$. By choosing the optimal one, we can find $\bar{\gamma} \in R$ such that   
\begin{equation}\label{eqn:many_conjugates}
\frac{\Big|\big\{ (\sigma_{v_1}, \ldots, \sigma_{v_{d^{k}}}) \in \Rst_{\Gamma_n}(\mathcal{L}_k) \ \big | \  s^{\bar{\gamma}(\sigma_{v_1}, \ldots, \sigma_{v_{d^{k}}})} \in L\big\}\Big|}{|\Rst_{\Gamma_n}(\mathcal{L}_k)|} \geq c.
\end{equation}

Here $(\sigma_{v_1}, \ldots, \sigma_{v_{d^{k}}})$ stands for the element of $\Rst_{\Gamma_n}(\mathcal{L}_k)$ that pointwise fixes $\mathcal{L}_{k}$, and has sections $\sigma_{v_i} \in \Rst_{\Gamma_n}(v_i)$ at the vertices $v_i \in \mathcal{L}_{k}$.

Let $\bar{s} = s^{\bar{\gamma}}$, and let the cycles of $\bar{s}$ on $\mathcal{L}_k$ be $C_1, \ldots C_r$, and let $C_i = (u^i_1 u^i_2 \dots u^i_{l(i)})$, $l(i)$ denotes the length of the cycle $C_i$, and $\bar{s}(u^i_j) = u^i_{j+1}$. We use the convention that $u^i_{l(i)+1} = u^i_1$. Assume that $l(1) \geq l(2) \geq \ldots \geq l(r)$ and let $t$ be the largest index for which $l(t) \geq 3$. Then $C = \supp(C_1) \cup \ldots \cup \supp(C_t) \subseteq \mathcal{L}_k$ is the union of $\bar{s}$-orbits of length at least 3 on $\mathcal{L}_k$. 

The next proposition shows that if $n$ is large enough, then $L$ has to contain the double commutator of some rigid level stabilizer under $C$, where the depth of this level does not depend on $n$.

\begin{proposition} \label{proposition:few_exceptions} Let $k, s$ and $c$ be fixed. Then there exists some $m > k$ and $n_0 > m$ such that for any $n \geq n_0$, $L$ and corresponding $\bar{\gamma}$ satisfying (\ref{eqn:many_conjugates}) above we have $\Rst''_{\Gamma_n}\big(\Sh_{\mathcal{L}_m}(C)\big) \subseteq L$.
\end{proposition}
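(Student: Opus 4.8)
The plan is to argue cycle by cycle, manufacturing elements of $L$ that are supported on a single deep subtree out of products and commutators of conjugates of $\bar{s}$ that are forced to lie in $L$. Fix one cycle $C_i=(u_1\ldots u_l)$ of $\bar{s}$ on $\mathcal{L}_k$ with $l=l(i)\geq 3$; the length-$\geq 3$ condition (which is exactly how $C$ is defined) will be used essentially, so cycles of length $1$ and $2$ are rightly discarded. Write $G=\{\sigma\in\Rst_{\Gamma_n}(\mathcal{L}_k)\mid \bar{s}^{\sigma}\in L\}$, so by hypothesis $G$ has density $\geq c$. Since $L$ is a group, for $\sigma,\sigma'\in G$ the element $\bar{s}^{\sigma}(\bar{s}^{\sigma'})^{-1}$ lies in $L$, and a direct computation gives
\[
\bar{s}^{\sigma}(\bar{s}^{\sigma'})^{-1}=\big(\bar{s}^{\rho}\bar{s}^{-1}\big)^{\sigma'},\qquad \rho=\sigma(\sigma')^{-1}.
\]
Because $\bar{s}$ merely shifts the subtrees $T_{u_1}\to\cdots\to T_{u_l}\to T_{u_1}$ rigidly, $\bar{s}^{\rho}\bar{s}^{-1}$ fixes $\mathcal{L}_k$ pointwise, and its coordinate on $T_{u_{j+1}}$ is the ``twisted difference'' $\rho_{u_{j+1}}^{-1}$ times the transport of $\rho_{u_j}$. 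Thus all these elements lie in $L\cap\Rst_{\Gamma_n}(\mathcal{L}_k)=L\cap\prod_{v\in\mathcal{L}_k}\Rst_{\Gamma_n}(v)$, and the whole analysis takes place inside this finite direct product.

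First I would localize, working directly at the deep level. If $\rho$ is supported on a single subtree $T_w$ with $w\in\Sh_{\mathcal{L}_m}(u_1)$ and value $a$, then $\bar{s}^{\rho}\bar{s}^{-1}$ equals $a^{-1}$ on $T_w$ and the transported copy of $a$ on $T_{w'}$ (where $w'\in\Sh_{\mathcal{L}_m}(u_2)$ is the image of $w$), and is trivial elsewhere; so, up to the conjugation by $\sigma'$, $L$ contains elements supported on two ``adjacent'' deep subtrees. Commuting such an element with a second one supported on $T_{w'}\cup T_{w''}$ (with $w''$ a descendant of $u_3$) produces an element of $L$ supported on the single subtree $T_{w'}$ and lying in $\Rst'_{\Gamma_n}(w')$; here $u_1,u_2,u_3$ must be distinct, which is precisely where $l\geq 3$ enters. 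The $\sigma'$-conjugations that contaminate these elements are inner automorphisms of the relevant local groups, so they do not shrink the subgroup generated.

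The genuine difficulty is that the hypothesis only supplies a positive \emph{density} of good $\sigma$, not a subgroup: the realizable perturbations $\rho$ form the difference set $GG^{-1}$, which in the worst case is a subgroup of index $\approx 1/c$, so even single-coordinate perturbations need not be realizable at level $k$. This is what forces the passage to a deep level $\mathcal{L}_m$. Once $m-k$ is large, $GG^{-1}$ meets the product $\prod_{w\in\Sh_{\mathcal{L}_m}(u_1)}\Rst_{\Gamma_n}(w)$ over the $\approx d^{m-k}$ descendants of $u_1$ in a subset of bounded index, and a counting argument then guarantees enough independent deep perturbations to feed into the localization step. Taking a second round of commutators among the localized $\Rst'_{\Gamma_n}(w')$-elements then yields $\Rst''_{\Gamma_n}\big(\Sh_{\mathcal{L}_m}(C)\big)\subseteq L$.

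The step I expect to be the main obstacle is exactly this quantitative one: proving that $m$ and $n_0$ can be chosen \emph{uniformly in $n$} (depending only on $k$, $s$, $c$ and the local branch structure of $\Gamma$), which is what the later use of the proposition requires. The appearance of the \emph{second} derived subgroup, rather than $\Rst'$, is structural and not an artifact: there are two layers of loss that each can only be cleaned by sacrificing one abelian quotient, namely the coset-rather-than-subgroup nature of the good set $G$ (hence of the realizable perturbations $GG^{-1}$) and the uncontrolled inner conjugations by the $\sigma'$. I would expect the bulk of the argument to consist of turning the density $c$ into a clean statement about bounded-index subgroups of large direct products of the local rigid stabilizers, and bookkeeping the two commutator passages so that they reach precisely $\prod_{w}\Rst''_{\Gamma_n}(w)$ with $m$ independent of $n$.
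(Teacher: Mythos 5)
Your outline shares the paper's starting computation (the section formula $[\bar{s}^{\sigma}]_{u^i_j}=\sigma_{u^i_j}(\sigma_{u^i_{j+1}})^{-1}$ for conjugates of $\bar{s}$, and the essential use of cycle length $\geq 3$ to isolate a single subtree), but the step you yourself flag as the main obstacle --- converting the density $c$ into a bounded-index subgroup statement uniformly in $n$ --- is exactly where the paper's two key ideas live, and you do not supply a substitute for either. The paper does not work with the difference set $GG^{-1}$ at all: it fixes a single $\eta$ with $\bar{s}^{\eta}\in L$, so that every $\bar{s}^{\sigma}(\bar{s}^{\eta})^{-1}$ lies honestly in the subgroup $L_0\leq L$ of elements trivial outside $C$, and then observes that after discarding one vertex $u^i_1$ from each cycle the section map $\sigma\mapsto\big([\bar{s}^{\sigma}]_{u}\big)_{u\in D}$ is \emph{surjective} onto $\Rst_{\Gamma_n}(D)$ (choose the $\sigma_{u^i_j}$ sequentially around each cycle; only the last, discarded, section is constrained). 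Hence $\pi_D(L_0)$ --- a genuine subgroup, being a homomorphic image of $L_0$ --- contains a $c$-fraction of $\Rst_{\Gamma_n}(D)$ and therefore has index at most $\lceil 1/c\rceil\cdot[\Gamma:K]^{|D|}$ in $(\Gamma_{n-k})^{|D|}$, independently of $n$. Your single-coordinate perturbations only yield a symmetric density-$c$ \emph{subset} of realizable $\rho$'s, the map $\rho\mapsto\bar{s}^{\rho}\bar{s}^{-1}$ is not a homomorphism, and the ``counting argument guaranteeing enough independent deep perturbations'' is left unspecified; this is a genuine gap, not a routine verification.

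Second, your accounting of the two commutator layers does not close. Commuting an element supported on $T_w\cup T_{w'}$ with one supported on $T_{w'}\cup T_{w''}$ produces commutators $[a,b]$ where $a,b$ range over (conjugates of) your realizable subsets of $\Rst_{\Gamma_n}(w')$; at best this generates the derived subgroup of a \emph{bounded-index subgroup} of $\Rst_{\Gamma_n}(w')$, not $\Rst'_{\Gamma_n}(w')$, and a second round then lands in the second derived subgroup of a bounded-index subgroup, which need not contain $\Rst''_{\Gamma_n}(w')$. The paper absorbs the index loss by descending in the tree: it passes to a normal subgroup $N\trianglelefteq(\Gamma_{n-k})^{|D|}$ of bounded index inside $\pi_D(L_0)$, picks $\varphi\in N$ moving some vertex $w$ at depth $m_0$, and uses Grigorchuk's identity $[[\varphi,f],g]=[f,g]$ for $f,g\in\Rst_{\Gamma_n}(uw)$ (valid because $N$ is normal and $\varphi$ separates the supports) to place the \emph{full} $\Rst'_{\Gamma_n}$ of a deeper level inside $N$. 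The second derived subgroup then arises not from iterating the same commutator but from intersecting the three discard patterns $D$, $E$, $F$ (omitting $u^i_1$, $u^i_2$, $u^i_3$ respectively) to isolate single coordinates, using $(D\cap E)\cup(E\cap F)\cup(D\cap F)=C$. Without the surjectivity trick, the normality needed for $[[\varphi,f],g]=[f,g]$, and the descent to level $m$, your outline cannot be completed as written.
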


\begin{proof}
Let  $\sigma = (\sigma_{v_1}, \ldots, \sigma_{v_{d^{k}}})$. Fix $\sigma_{v_i}$ for all $v_i \notin C$, and let the rest of the coordinates $\sigma_{u^i_j}$ vary over $\Rst_{\Gamma_n}(u^i_j)$. Choosing a maximum over all choices of the fixed $\sigma_{v_i}$ we can assume that 

\[\frac{\Big|\big\{ (\sigma_{u^i_j})_{i,j=1}^{t, l(i)} \in \Rst_{\Gamma_n}(C)\ \big | \  \bar{s}^{\sigma} \in L\big\}\Big|}{|\Rst_{\Gamma_n}(C)|} \geq c.\]  

Consider the conjugates $\bar{s}^{\sigma}$, more precisely what their sections are at the vertices $u^i_j$:

\begin{equation}\label{conjugatesection}
\left[\bar{s}^{(\sigma_{v_1}, \ldots, \sigma_{v_{d^{k}}})}\right]_{u^i_j} = \sigma_{u^i_j} \cdot [\bar{s}]_{u^i_j} \cdot (\sigma_{u^i_{j+1}})^{-1}.
\end{equation}

Fix one $\eta=(\eta_{v_1}, \ldots, \eta_{v_{d^{n-1}}}) \in \Rst_{\Gamma_n}(\mathcal{L}_k)$ with $\eta_{v_i} = \sigma_{v_i}$ for all $v_i \notin C$ and $\bar{s}^{\eta} \in L$. Let $\sigma_{u^i_j}$ run through $\Rst_{\Gamma_n}(u^i_j)$, and consider $\bar{s}^{\sigma} \cdot (\bar{s}^{\eta})^{-1}$. All these elements fix $\mathcal{L}_{k}$ pointwise, and their sections are

\[\big[ \bar{s}^{\sigma} \cdot (\bar{s}^{\eta})^{-1} \big]_{u^i_j} = \sigma_{u^i_j} \cdot [\bar{s}]_{u^i_j} \cdot (\sigma_{u^i_{j+1}})^{-1} \cdot \big(\eta_{u^i_j} \cdot [\bar{s}]_{u^i_j} \cdot (\eta_{u^i_{j+1}})^{-1}\big)^{-1}. \]
Observe that the sections are trivial over $v_i \notin C$. 

We will discard one vertex from each $C_i$, and focus on the sections we see on the rest. Let $D_{i} = C_i \setminus \{u^{i}_1\}$. 

Consider the sections of $\bar{s}^{\sigma}$ at the vertices in $D_{i}$ as the sections $(\sigma_{u^{i}_1}, \ldots, \sigma_{u^{i}_{l(i)}})$ run through $\Rst_{\Gamma_n}(C_i)$. We claim that the sections $\left( [\bar{s}^{\sigma}]_{u^{i}_2}, \ldots, [\bar{s}^{\sigma}]_{u^{i}_{l(i)}} \right)$ run through $\Rst_{\Gamma_n}(D_i)$. 

Indeed, given any sections $\left( [\bar{s}^{\sigma}]_{u^{i}_j} \right)_{j=2}^{l(i)}$, and any choice of $\sigma_{u^{i}_2}$ we can sequentially choose the $\sigma_{u^{i}_{j+1}}$ according to (\ref{conjugatesection}) to get the given sections at $j=2, 3, \ldots, l(i)$. The last choice is $\sigma_{u^{i}_1}$, which ensures $[\bar{s}^{\sigma}]_{u^{i}_{l(i)}}$ is correct. The last remaining section $[\bar{s}^{\sigma}]_{u^{i}_{1}}$ is already determined at this point, so we cannot hope to surject onto the whole $\Rst_{\Gamma_n}(C_i)$. 

We can do this independently for each $D_{i}$. Let \[D = \bigcup_{i} D_{i}.\] 

The sections of $\bar{s}^{\sigma}$ over the index set $D$ give $\Rst_{\Gamma_n}(D)$ as the sections $(\sigma_{u^{i}_j})$ run through $\Rst_{\Gamma_n}(C)$. 

 Whenever $\bar{s}^{\sigma} \in L$ we have $\bar{s}^{\sigma} \cdot (\bar{s}^{\eta})^{-1} \in L_0$, where $L_0 \subseteq \Stab_{L}(\mathcal{L}_{k})$ is the set of elements with trivial sections outside $C$. The fact that a fixed positive proportion of the $\bar{s}^{\sigma}$ are in $L$ ensures that a fixed proportion of the elements of $\Rst_{\Gamma_n}(D)$ are seen in $L_0$. Let $\pi_D: \Stab_L(\mathcal{L}_k) \to \Gamma_{n-k}^{D}$ denote the projection to the coordinates in $D$. Formally we get

\[\big|\pi_D (L_0)\big| \geq c \cdot |\Rst_{\Gamma_n}(D)|.\]

As $\Gamma$ is a regular branch group over some $K$, we have $\pi_D (L_0) \leq (\Gamma_{n-k})^{|D|}$, and $(K \cap \Gamma_{n-k})^{|D|} \leq \Rst_{\Gamma_n}(D)$. Using Lemma \ref{lemma:index_bound} we can bound the index of $\pi_D (L_0)$ in $\Gamma_{n-k}^{|D|}$:

\[\big[ (\Gamma_{n-k})^{|D|} : \pi_D (L_0) \big] = \frac{\left|(\Gamma_{n-k})^{|D|}\right|}{|\pi_D (L_0)|} \leq \left\lceil\frac{1}{c}\right\rceil \cdot \frac{\left|(\Gamma_{n-k})^{|D|}\right|}{|\Rst_{\Gamma_n}(D)|} \leq \]
\[ \leq \left\lceil\frac{1}{c}\right\rceil \cdot \frac{\left|(\Gamma_{n-k})^{|D|}\right|}{\left|(K \cap \Gamma_{n-k})^{|D|}\right|} \leq \left \lceil\frac{1}{c}\right\rceil \cdot [\Gamma: K]^{|D|}.\] 

In any group a subgroup of finite index $M$ contains a normal subgroup of index at most $(M!)$. That is, we can find some $N \vartriangleleft (\Gamma_{n-k})^{|D|}$ such that $ N \leq \pi_D (L_0)$ and \[\big[(\Gamma_{n-k})^{|D|} : N \big] \leq \left( \left \lceil\frac{1}{c}\right\rceil \cdot [\Gamma: K]^{|D|}\right)!\]

The bound on the index of $N$ does not depend on $n$, only on $k$, $s$ and $c$. The bounded index ensures, that we can find some $m>k$ and $n_0>m$ with the following properties. For $n \geq n_0$ and for each index $u \in D$ we can find an element $\varphi \in N$ such that $\pi_{u}(\varphi) \notin \Stab_{\Gamma_{n-k}}\big(\mathcal{L}_{m-k}(T_u)\big)$. Here $\pi_u$ denotes the projection to the $u$ coordinate in $\Gamma_{n-k}^{D}$. Moreover, we choose $n_0$ large enough so that $\Gamma_{n_0-k}$ acts transitively on $\mathcal{L}_{m-k}(T_u)$.

Using Grigorchuk's standard argument from \cite[Lemma 5.3]{bartholdi2003branch} and \cite[Theorem 4]{grigorchuk2000just} we pick some $w \in \mathcal{L}_{m-k}(T_u)$ not fixed by $\varphi$, elements $f$ and $g$ from $\Rst_{\Gamma_n}(uw)$ and argue that the commutator $[[\varphi, f], g] = [f,g]$ is in $N$. This shows $\Rst'_{\Gamma_n}(uw) \subseteq N$. If $n \geq n_0$ then $\Gamma_{n-k}$ is transitive on $\mathcal{L}_{m-k}(T_u)$, so we get \[\Rst'_{\Gamma_n}\big(\mathcal{L}_{m-k}(T_u)\big) \subseteq N.\]

Repeating the argument of the previous paragraph for all $u \in D$ we get 

\[\Rst'_{\Gamma_n} \big(\Sh_{\mathcal{L}_m}(D)\big) \subseteq N \subseteq \pi_D (L_0).\]

We now repeat this discussion, but we discard different points from the orbits: let $E_{i} = (C_{i}) \setminus \{u^{i}_2\}$ and $E = \bigcup_{i} E_{i}$.  We have 

\[\Rst'_{\Gamma_n} \big(\Sh_{\mathcal{L}_m}(E)\big) \subseteq \pi_E (L_0).\]

We claim that $\Rst''_{\Gamma_n} \big(\Sh_{\mathcal{L}_m}(D \cap E)\big) \subseteq  L$. Indeed, let $u^i_j \in C_i$, $j \neq 1,2$. By the above we see that for any $\varphi \in \Rst'_{\Gamma_n}\big(\Sh_{\mathcal{L}_m}(u^i_j)\big)$ we have $h_1 \in L_0$ such that $\pi_D(h_1)_{u^{i}_j}=\varphi$ and all other coordinates of $\pi_D(h_1)$ are the identity. Similarly we have $h_2 \in L_0$ such that $\pi_E(h_2)_{u^{i}_j}=\psi$ and all other coordinates of $\pi_E(h_1)$ are the identity. Since $\mathcal{L}_{k} \setminus  D$ and $\mathcal{L}_{k} \setminus E$ are disjoint the commutator $[h_1, h_2] \in L_0$ has all identity coordinates except for the one corresponding to $u^{i}_j$ which is $[\varphi, \psi]$. 

We have managed to take care of the points $u^{i}_j$ where $j \neq 1,2$. To cover the remaining points as well we need one more way to discard points from the orbits. Namely $F$, where we discard the third vertex $u^{i}_3$ from every $C_i$. Using the fact that $(D \cap E) \cup (E \cap F) \cup (D \cap F) = C$ we get that $\Rst''_{\Gamma_n}\big(\Sh_{\mathcal{L}_m}(C)\big) \subseteq L$, which finishes the proof. 
\end{proof}

\subsection{Proof of the main result}
\label{subsection:proof_of_finitary}

\begin{proof}[Proof of Theorem \ref{theorem:finitary}]
During the proof we will have to choose deeper and deeper levels in $T$. For the convenience of the reader we summarized these choices in Figure \ref{fig:struc}. 

Let $\Gamma_n \subseteq \Gamma$ denote the subgroup of elements of $\Gamma$ that have trivial vertex permutations below the $n^{\mathrm{th}}$ level. Suppose that $H$ is an ergodic IRS of $\Gamma$.

By Theorem \ref{theorem:clopen} we know that the all nontrivial orbit-closures of $H$ on $\partial T$ are clopen. For every clopen set $C$ there exists a smallest integer $k_C$ such that $C$ is the union of shadows of points on $\mathcal{L}_{k_C}$. Clearly $k_C$ does not change when $C$ is translated by some automorphism. For the random subgroup $H$ and a fixed $k_0 \in \N$ we can collect the clopen sets $C$ from $\mathcal{O}_H$ with $k_C < k_0$, let $C_{H, k_0}$ be the union of these. This set moves together with $H$ when conjugating by some $\gamma \in \Gamma$:

\[C_{H^{\gamma}, k_0} = (C_{H, k_0})^{\gamma}.\]

For $n \geq k_0$ let $V_n \subset \mathcal{L}_{n}$ be the set of points whose shadow make up $C_{H, k_0}$. As $C_{H, k_0}$ moves with $H$, so does $V_n$. 
$V_{k_0}$ is a union of orbits of $H$, let those orbits be denoted $V_{k_0}^i$, where $i\in \{1, \ldots, j\}$ and 

\[V_{k_0} = \bigcup_{i=1}^j V_{k_0}^i.\]

Let $V_n^i = \Sh_{\mathcal{L}_n}(V_{k_0}^i)$. The fact that $H$ acts minimally on the components of $C_{H, k_0}$ translates to saying that $H$ acts transitively on each $V_n^i$. Notice that since we collected clopen sets $C$ with $k_C$ strictly less then $k_0$ we ensured that $V_{k_0}^i$ contains at least $d$ points for all $i$. 

For every realization of $H$ we can choose finitely many elements of $H$ that already show that $H$ acts transitively on $V_{k_0}^i$ for all $i$. These finitely many elements are all finitary, so there is some $n_H$, which might depend on the realization of $H$,  such that all those finitely many elements are in $\Gamma_{n_H}$. 

This function $n_H$ is not necessarily conjugation-invariant, so it need not be constant merely by ergodicity. However, for any $\varepsilon > 0$, one can find some $k \geq k_0$ such that the $V_{k_0}^i$ are distinct orbits of $H_{k} = H \cap \Gamma_{k}$ on $\mathcal{L}_{k_0}$ with probability at least $1-\varepsilon$. This $k$ is a deterministic number, it does not depend on the realization of $H$, only on the choice of $\varepsilon$.

Enlist all the possible subsets $S_1, \ldots, S_N$ of $\Gamma_{k}$ that generate a realization of the $V_{k_0}^i$ as orbits on $\mathcal{L}_{k_0}$. Clearly there are finitely many. The probability that $S_i \subseteq H$ cannot be 0 for all $i$, otherwise we would contradict the previous paragraph. So we can find some finite set $S$ of elements of $\Gamma_{k}$ and some sets $U_{k_0}^i \subseteq \mathcal{L}_{k_0}$ such that the $U_{k_0}^i$ are a realization of the $V_{k_0}^i$, $S$ is in $H$ with probability $p > 0$ and the $U_{k_0}^i$ are orbits of $S$.

By replacing $S$ with $\langle S \rangle$ we may assume that $S$ is a subgroup of $\Gamma_{k}$, as $S \subseteq H$ and $\langle S \rangle \subseteq H$ are the same events. 

\begin{figure}[h]
   \centering
   \includegraphics[width=14.5cm]{./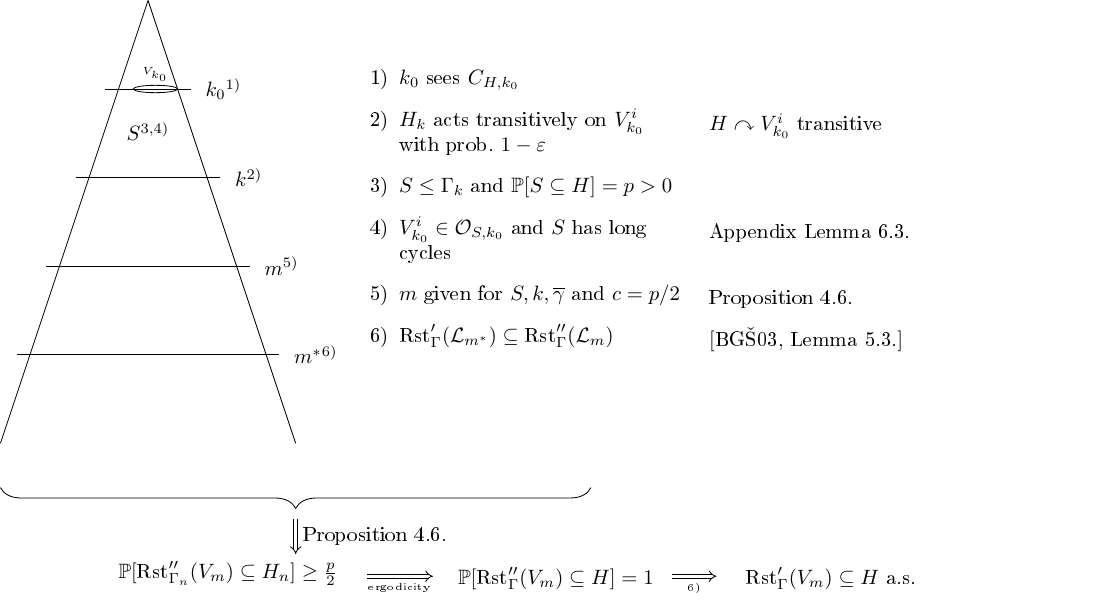}
   \caption{Choice of levels}
   \label{fig:struc}
\end{figure}

As  $|U_{k_0}^i| \geq d$, we know that all vertices of $U_{k_0}=\bigcup_{i=1}^j U_{k_0}^i$ are moved by some $s \in S$. As a consequence the same holds for $U_k$: for every vertex $v \in U_{k}$ there is some $s \in S$ such that $v \neq v^s$. However, we will need a stronger technical assumption on $S$ to make our argument work. We will assume that for every $v \in U_{k}$ we can find some $s \in S$ such that $v$, $v^s$ and $v^{s^2}$ are distinct, that is $v$ is part of a cycle of length at least 3 in the cycle decomposition of $s$. In Lemma \ref{lemma:technical_assumption} in the Appendix we show that one can indeed find such a $k$ and $S$.

Let $H_n = H \cap \Gamma_n$, for $n \geq k$. The random subgroup $H_n$ is clearly an IRS of $\Gamma_n$, however it need not be ergodic, i.e.\ the uniform random conjugate of a fixed subgroup in $\Gamma_n$. As $S \leq \Gamma_{k} \leq \Gamma_n$ we have $\P[S \leq H_n]=p$. 

\begin{lemma} \label{lemma:nice_ergodic_components}
In the ergodic decomposition of $H_{n}$ the measure of components that contain $S$ with probability at least $p/2$ is at least $p/2$. 
\end{lemma}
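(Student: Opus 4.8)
The plan is to invoke the ergodic decomposition of $H_n$ viewed as an IRS of $\Gamma_n$, and then apply a first-moment (Markov-type) averaging inequality. Since $\Gamma_n$ is a finite group, the space $\Sub_{\Gamma_n}$ is finite, the $\Gamma_n$-invariant probability measures on it form a finite-dimensional simplex, and the ergodic ones are precisely the uniform measures on single conjugacy classes of subgroups. Thus the law of $H_n$ can be written as a convex combination $\int_{\Omega} E_{\omega}\, d\nu(\omega)$ of ergodic components $E_{\omega}$, where $\nu$ is the mixing measure over the (finite) index set $\Omega$ of components.

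For each component $\omega$ I would set $q(\omega) = \P_{E_\omega}[S \le H_n]$, the conditional probability that the random subgroup contains $S$ given that it is drawn from the ergodic component $\omega$. By the law of total probability the global probability decomposes as
\[
p = \P[S \le H_n] = \int_{\Omega} q(\omega)\, d\nu(\omega).
\]
Measurability of $\omega \mapsto q(\omega)$ is immediate, and in this finite setting the integral is simply a finite sum, so no integrability issues arise.

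Next I would split this integral according to whether a component is ``good''. Let $A = \{\omega \in \Omega : q(\omega) \ge p/2\}$, which is exactly the collection of components named in the statement. Bounding $q \le 1$ on $A$ and $q < p/2$ on its complement gives
\[
p = \int_A q\, d\nu + \int_{A^c} q\, d\nu \le \nu(A) + \frac{p}{2}\,\nu(A^c) \le \nu(A) + \frac{p}{2}.
\]
Rearranging yields $\nu(A) \ge p/2$, which is precisely the claim that the total measure of ergodic components containing $S$ with probability at least $p/2$ is at least $p/2$.

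Since this is essentially a routine first-moment bound, I do not anticipate a genuine obstacle. The only point deserving care is the correct formulation of the ergodic decomposition of $H_n$ as an IRS of the \emph{finite} group $\Gamma_n$, together with the identification of an ergodic component with the uniform random conjugate of a fixed subgroup of $\Gamma_n$, so that $q(\omega)$ is well defined and the total-probability identity holds verbatim. Everything else is the elementary averaging step above.
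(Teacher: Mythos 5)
Your proof is correct and follows essentially the same route as the paper: decompose $p=\P[S\le H_n]$ over the (finitely many) ergodic components, bound the conditional probability by $1$ on the good components and by $p/2$ on the rest, and rearrange to get that the good components carry weight at least $p/2$. The only cosmetic difference is that you phrase the finite sum as an integral over a mixing measure.
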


\begin{proof}
Denote the ergodic components of $H_{n}$ by $H^{1}_{n}, \ldots, H^{r}_{n}$. Assume $H^{i}_{n}$ has weight $q_i$ in the decomposition, and contains $S$ with probability $p_i$. By ordering appropriately we can also assume $p_1, \ldots, p_l \geq p/2$ and $p_{l+1}, \ldots, p_{r} < p/2$. 

\[p = \sum_{i=1}^{r} q_i p_i \leq \left( \sum_{i=1}^{l} q_i \right) \cdot 1 + \left( \sum_{i=l+1}^{r} q_i \right) \cdot \frac{p}{2} \leq \left( \sum_{i=1}^{l} q_i \right) + \frac{p}{2},\]

\[\frac{p}{2} \leq \sum_{i=1}^{l} q_i.\]

So the weight of components containing $S$ with probability at least $p/2$ is at least $p/2$.  
\end{proof}

Choose an ergodic component of $H_{n}$ which contains $S$ with probability at least $p/2$. This ergodic component is the uniform random conjugate of a fixed subgroup $L \leq \Gamma_{n}$. 

We have $\P[S \in \widetilde{L}] \geq \frac{p}{2} >0$. In other words $L$ contains at least a $p/2$ proportion of the $\Gamma_n$-conjugates of $S$. By a ''maximum is at least as large as the average'' argument we can find some $\bar{\gamma}$ from the transversal of $\Rst_{\Gamma_n}(\mathcal{L}_k)$ such that   
\[\frac{\bigg|\big\{ (\sigma_{v_1}, \ldots, \sigma_{v_{d^{k}}}) \in \Rst_{\Gamma_n}(\mathcal{L}_k) \ \big | \  S^{\bar{\gamma}(\sigma_{v_1}, \ldots, \sigma_{v_{d^{k}}})} \in L\big\}\bigg|}{|\Rst_{\Gamma_n}(\mathcal{L}_k)|} \geq \frac{p}{2}.\]

We now use Proposition \ref{proposition:few_exceptions} for all $s \in S$ with $k$, $\bar{\gamma}$ defined above and $c = \frac{p}{2}$. As the cycles of length at least 3 of elements of $S^{\bar{\gamma}}$ cover $(U_{k})^{\bar{\gamma}}$ we get that for some fixed $m$ and large enough $n$ we have
\[
\Rst''_{\Gamma_n} \big((U_{m})^{\bar{\gamma}}\big)   \subseteq L.
\]  
It is clear that $(U_{k_0})^{\gamma}$ is the realization of $V_{k_0}$ corresponding to the realization $L$ of $H_n$, so we (almost surely) have $\Rst''_{\Gamma_n} (V_{m})  \subseteq \widetilde{L}$. The last inclusion holds for any ergodic component $\widetilde{L}$ oh $H_n$ containing $S$ with probability at least $p/2$. By Lemma \ref{lemma:nice_ergodic_components} the measure of these components of $H_n$ is at least $p/2$. Therefore: \[\P\big[\Rst''_{\Gamma_n} (V_m)   \subseteq H_n\big] \geq \frac{p}{2}.\]
 
As $\big(\Rst''_{\Gamma_n} (V_m)   \subseteq H_n\big) \Leftrightarrow \big( \Rst''_{\Gamma_n} (V_m)   \subseteq H \big)$ we have 

\[ \P\big[ \Rst''_{\Gamma_n} (V_m) \subseteq H\big]   \geq  \frac{p}{2}.\] 

We get this for all $n$ large enough. Since $\Rst''_{\Gamma_n} (V_m) \subseteq \Rst''_{\Gamma_{n+1}} (V_m)$ the events in question form a decreasing chain, and for the intersection we get

\[ \P\big[ \Rst''_{\Gamma} (V_m)  \subseteq H\big] \geq  \frac{p}{2}.\] 

As $H$ is ergodic the above implies \[\P\big[ \Rst''_{\Gamma} (V_m)  \subseteq H\big]= 1.\] 
Clearly $\Rst''_{\Gamma} (\mathcal{L}_m) \vartriangleleft \Gamma$, so using \cite[Lemma 5.3]{bartholdi2003branch} we can find some $m^* \geq m$ such that $\Rst'_{\Gamma} (\mathcal{L}_{m^*}) \subseteq \Rst''_{\Gamma}(\mathcal{L}_m)$. This also means that $\Rst'_{\Gamma} (V_{m^*}) \subseteq \Rst''_{\Gamma} (V_m)$, so 

 \[\P\big[ \Rst'_{\Gamma} (V_{m^*})  \subseteq H\big]= 1.\]

The number $m^*$ only depended on the IRS $H$ and the choice of $k_0$. Repeating this argument for all $k_0 \in \N$ covers all clopen sets from $\mathcal{O}_H$, which as discussed in part \ref{subsection:decomposition} proves Theorem \ref{theorem:finitary}.
\end{proof}

\section{Corollaries of Theorem \ref{theorem:finitary}} \label{section:corollaries}

In this section we prove Theorem \ref{theorem:atomic} and sketch the proof of Theorem \ref{theorem:branch_group_closure}.

\subsection{Fixed point free IRS's}

To motivate the following result let us recall Theorem \ref{theorem:alternating_finitary_ffp}, which states that any fixed point free, ergodic IRS of $\Alt_f (T)$ with $d \geq 5$ contains a whole level stabilizer, in particular $H$ is a random conjugate of a finite indexed subgroup. In other words the measure defining the IRS is atomic. As it turns out the fixed point free case of Theorem \ref{theorem:finitary} implies this for fixed point free ergodic IRS's of countable, finitary regular branch groups as well.

\medskip

\begin{proof}[Proof of Theorem \ref{theorem:atomic}]
By Theorem \ref{theorem:finitary} we know that an ergodic almost surely fixed point free IRS $H$ of a finitary regular branch group $\Gamma$ contains $\Rst'_{\Gamma}(\mathcal{L}_m)$ for some $m \in \N$. 

IRS's of $\Gamma$ containing the normal subgroup $\Rst'_{\Gamma}(\mathcal{L}_m)$ are in one-to-one correspondence with IRS's of the quotient $G=\Gamma/\Rst'_{\Gamma}(\mathcal{L}_m)$, which in this case is of the form $A \rtimes F$ where $A$ is the abelian group $\Rst_{\Gamma}(\mathcal{L}_m) / \Rst'_{\Gamma}(\mathcal{L}_m)$, and $F$ is the finite group $\Gamma / \Rst_{\Gamma}(\mathcal{L}_m)$. As $\Gamma$ is assumed to be finitary both $\Gamma$ and $G$ are countable.

Let $\widehat{H} = H / \Rst'_{\Gamma}(\mathcal{L}_m) \leq G$ be the image of $H$ in $G$. It is an ergodic IRS of $G$. Let $\widehat{H}_0 = \widehat{H} \cap A$, which is also an ergodic IRS of $G$. We see that $\widehat{H}_0 \subseteq A$ is an ergodic random subgroup with distribution invariant under conjugation by elements of $G$. As $A$ is abelian and $F$ is finite, it is clearly the uniform random $F$-conjugate of some subgroup $L_0 \leq A$. This shows that $\widehat{H}_0$ can only obtain finitely many possible values.

We claim that once $\widehat{H}_0$ is fixed, there are only countably many possible choices for $\widehat{H}$. Indeed we have to choose a coset of $\widehat{H}_0$ in $G$ for all $f \in F$, which can do in only countably many different ways. 

This shows that the support of $\widehat{H}$ is countable, but there is no ergodic invariant measure on a countably infinite set, so the support is finite. 
\end{proof}

\subsection{IRS's in non-finitary branch groups}

In this subsection we will sketch the proof of Theorem \ref{theorem:branch_group_closure}. This theorem is not a direct consequence (as far as we see) of Theorem \ref{theorem:finitary}, but one can alter the proof to obtain the desired theorem. 
First of all let us fix $\pi_n:\Gamma\to S_d^{\mathrm{wr}(n)}$ to be the projection from $\Gamma$ to the automorphism group of the $d$-ary tree of depth $n$, which is the restriction of elements to the first $n$ levels.
The main conceptional difference is that we are trying to understand the group $\Gamma$ through the groups $\pi_n(\Gamma)$ instead of $\Gamma_n$. The statement that we conclude in this case is weaker.

Our aim is to present only the spine of the proof, as the reasoning is very similar to the proof of Theorem \ref{theorem:finitary} and we leave the details to the reader. In fact some technical details such as the ergodicity of $H_n$ and the fact that $H_n$ already acts transitively on the $V^i_n$ makes this proof easier. 

\medskip
\begin{proof}[Proof of Theorem \ref{theorem:branch_group_closure}]
Let $G_n=\pi_n(\Gamma)$, fix $k \in\mathbb{N}$ and let $C_{H,k}$ be the union of clopen orbit-closures $C$ from $\mathcal{O}_H$ in $\partial T$ with $k_C<k$. For any $n\ge k$ let $V_n\subseteq \mathcal{L}_n$ be the set of points whose shadow make up $C_{H,k}$. We can decompose $V_{k}$ into $H$-orbits, denoted by 
\[
  V_{k}=\cup_{i=1}^jV_{k}^i.
\]

Observe that for any realization of $H$ one can find at most $|V_{k}|$ many elements in $H$ that already show that $H$ acts transitively on each $V_{k}^i$. This means that we can find an $S\subset \Gamma$ of size at most $|V_{k}|$, such that $S$ \emph{induce} a realization of the $V_{k}^i$ as orbits on $\mathcal{L}_{k}$,   
\[
  \mathbb{P}[S\subseteq H]=p>0.
\]

The orbits \emph{induced} by $S$ on $\mathcal{L}_{k}$ are the orbits of $\langle S \rangle$. Though $\langle S \rangle$ might be infinite, we only have to look for our $S$ among sets of size at most $|V_k|$, which ensures that one of those is contained in $H$ with positive probability. (Of course $S \subseteq H$ and $\langle S \rangle  \subseteq H$ are equivalent, so in fact we see that $\mathbb{P}[\langle S \rangle \subseteq H]=p>0$, but we will only need $S$ in this proof.)

Denote by $U_{k}^i$ the realization of $V_{k}^i$ that $S$ induces as orbits. As before we can ensure that  for any $v\in U_{k}$ there is an $s\in S$, such that $v$, $v^s$ and $v^{s^2}$ are distinct by replacing $k$ and $S$ if necessary. (See the Remark after the proof of Lemma \ref{lemma:technical_assumption} in the Appendix.)

For every $n$ let  $H_n=\pi_n(H) \le G_n$. The random subgroup $H_n$ is an ergodic IRS of $G_n$, therefore there exists an $L_n\le G_n$ such that $H_n$ is an uniform random conjugate of $L_n$. Since

\[
  \mathbb{P}[\pi_n(S)\subseteq \widetilde{L_n}]=\mathbb{P}[\pi_n(S)\subseteq H_n]\ge p,
\]
we have an element $\overline{\gamma}$ from the transversal of $\Rst_{G_n}(\mathcal{L}_{k})$ in $G_n$ such that
\[
  \frac{\bigg|\big\{ (\sigma_{v_1}, \ldots, \sigma_{v_{d^{k}}}) \in \Rst_{G_n}(\mathcal{L}_k) \ \big | \  \pi_n(S)^{\bar{\gamma}(\sigma_{v_1}, \ldots, \sigma_{v_{d^{k}}})} \in L_n\big\}\bigg|}{|\Rst_{G_n}(\mathcal{L}_k)|} \geq p.
\]

By following the argument in Proposition \ref{proposition:few_exceptions} but replacing $\Gamma_n$ by $G_n$ one can prove that there exists some $m$ such that for any $n$ large enough
\[
  \Rst_{G_n}''((U_m)^{\bar{\gamma}})\subseteq L_n.
\]
Therefore
\[
  \mathbb{P}[\Rst''_{G_n}(V_m)\subseteq H_n]\ge p>0,
\]
which by ergodicity implies
\[
  \mathbb{P}[\Rst''_{G_n}(V_m)\subseteq H_n]=1.
\]
Again we can find an $m^*\ge m$, such that $\Rst'_\Gamma(V_{m^*})\subseteq \Rst''_\Gamma(V_{m})$, therefore
\[
  \mathbb{P}[\pi_n(\Rst'_{\Gamma}(V_{m^*}))\subseteq \pi_n(H)]=1.
\]

This means that for any $g\in \Rst'_{\Gamma}(V_{m^*})$ there exists a sequence $h_n\in H$, such that $\pi_n(h_n)=\pi_n(g)$,  which implies that $\overline{\Rst'_{\Gamma}(V_{m^*})}\subseteq \overline{H}$ with probability 1.

On the other hand $\overline{\Rst'_{\Gamma}(V_{m^*})}\supseteq \overline{\Rst_{\Gamma}(V_{m^*})}'$. We claim that  $\Rst_{\overline{\Gamma}}(V_{m^*})=\overline{\Rst_{\Gamma}(V_{m^*})}$. Indeed, $\Rst_{\overline{\Gamma}}(\mathcal{L}_{m^*})$ is finite index in $\overline{\Gamma}$ which implies that it is open. Using this one can show that $\Rst_{\overline{\Gamma}}(\mathcal{L}_{m^*}) = \overline{\Rst_{\Gamma}(\mathcal{L}_{m^*})}$, which implies the same for $V_{m^*} \subseteq \mathcal{L}_{m^*}$. 

Putting this together we get
\[
  \Rst'_{\overline{\Gamma}}(V_{m^*})\subseteq \overline{H}
\]
with probability 1.
\end{proof}
 
\medskip

Note that this result on closures is possibly weaker than our earlier results. It is not clear even in the fixed point free case in $\Alt_f(T)$ if for some $L \leq \Alt_f(T)$ the closure $\overline{L}$ containing a level stabilizer implies the same for $L$.

\begin{problem} \label{problem:surjective_subgroup_whole}
Let $L \leq \Alt_f(T)$ be a subgroup such that $\pi_n(L) = A_d^{\mathrm{wr}(n)}$ for all $n$. Does it follow that $L=\Alt_f(T)$?
\end{problem}

In other words: is there a subgroup  $L \neq \Alt_f(T)$ which is dense in $\Alt(T)$? We saw that this cannot happen with positive probability when $L$ is invariant random.

The answer to Problem \ref{problem:surjective_subgroup_whole} is negative in the case of $\Aut(T)$. In the case of the binary tree let $L$ be the subgroup of elements with an even number of nontrivial vertex permutations. Generally for arbitrary $d$ let $L$ be the subgroup of elements whose vertex permutations multiply up to an alternating element. This $L$ is not the whole group, yet dense in $\Aut(T)$. Of course the really relevant question in this case would involve the containment of derived subgroups of level stabilizers.

\section{Appendix} \label{section:appendix}

In this section we prove the technical statements that we postponed during the rest of the paper. 

\subsection{Measurability of maps}

 \begin{proof}[Proof of Lemma \ref{lemma:measurable_fixedpoint_set}]

A closed subset $C$ can be approximated on the finite levels. Define $C_n$ to be the set of vertices $v$ on $\mathcal{L}_n$ with $\Sh(v) \cap C \neq \emptyset$. The sets $C_n$ correspond to the $(1/2^n)$-neighborhoods of $C$ in $\partial T$. 

We show that any preimage of a ball in $(\mathcal{C}, d_H)$ is measurable in $\Sub_{\Gamma}$. Let $C \in \mathcal{C}$ and $n\in \mathbb{N}$ be fixed. Then the ball
 \[
  B_{1/{2^n}}(C)=\{C' \in\mathcal{C}~|~ C_n=C'_n\}, 
 \]
 therefore its preimage is
 \[
  X=\{H\in \Sub_\Gamma ~|~ \Fix(H)_n=C_n\}.
 \]

We say that a finite subset $S \subseteq \Gamma$ witnesses $C_n$, if the subgroup they generate has no fixed points in $\mathcal{L}_n \setminus C_n$. If $C_n$ has no witness at all (this happens if there is a $\Gamma$-fixed vertex in $\mathcal{L}_n \setminus C_n$), then $X$ is empty, because for all $H \in \Sub_\Gamma$ we have $\Fix(H)_n \cap (\mathcal{L}_n \setminus C_n)\neq \emptyset$. If $C_n$ has a witness, it clearly has a witness of cardinality at most $|\mathcal{L}_n \setminus C_n|$. Let $W_{C_n}$ be the set of possible witnesses of $C_n$ of size at most $|\mathcal{L}_n \setminus C_n|$:
 \[
  W_{C_n}=\big\{S\subseteq \Gamma ~\big|~ |S|\le |\mathcal{L}_n \setminus C_n|\textrm{ and $S$ has no fixed points in }\mathcal{L}_n \setminus C_n \big\}.
 \]

Let us define $F_{C_n} \subseteq \Gamma$ to be the set of forbidden group elements, which do not fix $C_n$ pointwise. These are the elements that cannot be in any $H \in X$.
 
 Observe that both $W_{C_n}$ and $F_{C_n}$ are countable, since $\Gamma$ is countable, and $X$ can be obtained as
 
 \[
  X=\bigcup_{S\in W_{C_n}}\bigcap_{g \in F_{C_n}}\{H\in \Sub_\Gamma ~|~ S\subseteq H, g\notin H\}.
 \]

The sets $\{H\in \Sub_\Gamma ~|~ S\subseteq H, g\notin H\}$ are cylinder sets in the topology of $\Sub_{\Gamma}$, so the above expression shows that $X$ is measurable.
\end{proof}

\bigskip

\begin{proof}[Proof of Lemma \ref{lemma:measurable_orbit_closures}]
 
 To prove that the map is measurable, it is enough to show that any preimage of a ball is measurable in $\Sub_{\Gamma}$.
 So let $P\in\mathcal{O}$ and $n\in \mathbb{N}$ be fixed. Then the ball
 \[
  B_{1/{2^n}}(P)=\{Q\in\mathcal{O}~|~Q_n=P_n\}, 
 \]
 therefore its preimage is
 \[
  X=\{H\in \Sub_\Gamma ~|~ \left(\mathcal{O}_H\right)_n=P_n\}.
 \]
 
We say that a finite subset $S \subseteq \Gamma$ witnesses $P_n$, if the subgroup they generate induces the same orbits on $\mathcal{L}_n$, that is $\mathcal{O}_{\langle S\rangle,\mathcal{L}_n}=P_n$. Clearly every $P_n$ has a witness of cardinality at most $|\mathcal{L}_n|$. Let $W_{P_n}$ be the set of possible witnesses of $P_n$ of size at most $|\mathcal{L}_n|$:
 \[
  W_{P_n}=\big\{S\subseteq \Gamma ~\big|~ |S|\le |\mathcal{L}_n|\textrm{ and } \mathcal{O}_{\langle S\rangle,\mathcal{L}_n}=P_n\big\}.
 \]

Let us define $F_{P_n} \subseteq \Gamma$ to be the set of forbidden group elements, which do not preserve $P_n$. In other words these are the elements that cannot be in any $H \in X$.
 
 Observe that both $W_{P_n}$ and $F_{P_n}$ are countable, since $\Gamma$ is countable, and $X$ can be obtained as
 
 \[
  X=\bigcup_{S\in W_{P_n}}\bigcap_{g \in F_{P_n}}\{H\in \Sub_\Gamma ~|~ S\subseteq H, g\notin H\}.
 \]

The sets $\{H\in \Sub_\Gamma ~|~ S\subseteq H, g\notin H\}$ are cylinder sets in the topology of $\Sub_{\Gamma}$, so the above expression shows that $X$ is measurable.
\end{proof}

\subsection{Technical assumption in Theorem \ref{theorem:finitary}} \label{appendix:technical}

First we prove a lemma on intersection probabilities.

\begin{lemma} \label{lemma:intersection_probability}
Let $B_1, \dots B_r$ be measurable subsets of the standard probability space $(X, \mu)$ with $\mu(B_j)=p$ for all $j$, and $r=\left \lceil \frac{2}{p} \right \rceil$. Then there is some pair $(j,l)$ such that $\mu (B_j \cap B_l) \geq \frac{p^3}{6}$.
\end{lemma}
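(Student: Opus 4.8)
The plan is to run a second-moment (double-counting) argument on the overlap function $f = \sum_{j=1}^r \mathbf{1}_{B_j}$. First I would record the first moment: since $\mu$ is a probability measure and $\mu(B_j)=p$ for every $j$, linearity gives $\int_X f\,d\mu = rp$, and the hypothesis $r = \lceil 2/p\rceil$ guarantees $rp \geq 2$. This inequality is the only place the precise size of $r$ enters, and it is exactly what will let me absorb the lower-order term at the end.

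Next I would bound the second moment from below by Cauchy--Schwarz (equivalently Jensen, using $\mu(X)=1$):
\[
\int_X f^2 \, d\mu \geq \Big(\int_X f \, d\mu\Big)^2 = (rp)^2.
\]
Expanding $f^2$ and integrating term by term, the diagonal contributes $\sum_j \mu(B_j) = rp$ and the off-diagonal contributes $2\sum_{j<l}\mu(B_j \cap B_l)$, so
\[
2\sum_{j<l}\mu(B_j \cap B_l) = \int_X f^2\,d\mu - rp \geq (rp)^2 - rp = rp(rp-1).
\]
Here I invoke $rp \geq 2$ to replace $rp-1$ by $rp/2$, which yields $\sum_{j<l}\mu(B_j\cap B_l) \geq (rp)^2/4$.

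Finally I would pass from this sum to a single good pair by averaging. There are $\binom{r}{2} \leq r^2/2$ unordered pairs, so the largest pairwise intersection is at least the average:
\[
\max_{j<l}\mu(B_j\cap B_l) \geq \frac{\sum_{j<l}\mu(B_j\cap B_l)}{\binom{r}{2}} \geq \frac{(rp)^2/4}{r^2/2} = \frac{p^2}{2} \geq \frac{p^3}{6},
\]
the last inequality because $p \leq 1$. This is in fact stronger than the claimed bound, so the lemma follows a fortiori.

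I expect no serious obstacle: this is a routine first-/second-moment computation. The only points needing care are invoking $\mu(X)=1$ when applying Cauchy--Schwarz, cleanly separating the diagonal when expanding $\int f^2$, and using $rp \geq 2$ to tidy the bound before averaging. If one insisted on the exact constant $1/6$ rather than the $1/2$ that drops out here, a cruder estimate would suffice, but since the stated inequality is the weaker one it comes for free.
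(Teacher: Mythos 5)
Your proof is correct. It differs from the paper's argument in a mild but genuine way: the paper works with the first moment only, writing $\sum_j \chi_{B_j} = \sum_l \chi_{D_l}$ where $D_l$ is the set of points covered by at least $l$ of the $B_j$, deducing $\mu(D_2) \geq \frac{rp-1}{r-1}$ and then dividing by $\binom{r}{2}$; you instead run a second-moment computation, bounding $\int f^2$ below by $(rp)^2$ via Cauchy--Schwarz and reading off $2\sum_{j<l}\mu(B_j\cap B_l) \geq (rp)^2 - rp$ before averaging over pairs. Since $\mu(D_2) \leq \sum_{j<l}\mu(B_j \cap B_l)$, your route keeps more information, and correspondingly your final bound $p^2/2$ is strictly stronger than the paper's $p^3/6$ (the paper's own computation also lands near $p^3/4$ before being rounded down). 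Both are routine averaging arguments and both use the hypothesis $r = \lceil 2/p\rceil$ only through $rp \geq 2$; the one point worth stating explicitly in your write-up is that $p \leq 1$ forces $r \geq 2$, so that a pair with $j \neq l$ actually exists. Either proof serves the paper's purpose, since only the qualitative lower bound is used downstream in Lemma \ref{lemma:technical_assumption}.
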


\begin{proof}
Let $\chi_{B}$ denote the characteristic function of the measurable set $B$. Let $D_{l}$ denote the set of points in $X$ that are covered by at least $l$ sets from $B_1, \dots B_r$. Then

\[\sum_{j=1}^{r} \chi_{B_j} = \sum_{l=1}^{r} \chi_{D_l},\]

\[ \int_{X}\sum_{j=1}^{r} \chi_{B_j} \ d \mu = \sum_{j=1}^{r} \mu(B_j) = rp,\]

\[ rp = \int_{X} \sum_{l=1}^{r} \chi_{D_l} \ d \mu = \sum_{l=1}^{r} \mu(D_l).\]
We have $D_1 \supseteq D_2 \ldots \supseteq D_r$, so $1 \geq \mu (D_1) \geq \mu(D_2) \ldots \geq \mu(D_r)$. 

\[ rp = \sum_{l=1}^{r} \mu(D_l) \leq 1 + (r-1)\mu(D_2).\]

\[\mu(D_2) \geq \frac{rp-1}{r-1}.\]
The set $D_2$ is covered by the $B_j \cap B_l$, so 

\[ \max_{j,l} \mu(B_j \cap B_l) \geq \frac{\mu(D_2)}{{r \choose 2}} \geq \frac{rp-1}{{r \choose 2} (r-1)} \geq \frac{1}{\left(\frac{\left(\frac{2}{p} + 1\right) \left(\frac{2}{p} \right)}{2}\right)\left(\frac{2}{p}\right)} \geq \frac{p^3}{2(p+2)} \geq \frac{p^3}{6}.\]
\end{proof}

\bigskip

We also prove that one can find  a lot of elements of order at least 3 in weakly branch groups.

\begin{lemma}\label{lemma:high_order_element}
 Let $G$ be a weakly branch group. Then for any $v\in T$ there is a $g\in\Rst_G(v)$ of order at least 3.
\end{lemma}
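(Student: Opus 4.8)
The plan is to read off the desired element directly from the defining property of weak branching, namely that every rigid vertex stabilizer is nontrivial. First I would pick any nontrivial $g \in \Rst_G(v)$, which exists since $G$ is weakly branch. If $g$ already has order at least $3$ we are done immediately, so the only case that requires work is when $g$ is an involution, and in that case the idea is to use a rigid stabilizer further down the tree to promote $g$ to an element of larger order.

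Assume then that $g^2 = 1$. Since $g \neq 1$ and $g$ fixes every vertex outside $T_v$, it moves some proper descendant $w$ of $v$ to a distinct vertex $w' = w^g$ on the same level. Because $w$ and $w'$ lie on a common level and are distinct, neither is an ancestor of the other, so the subtrees $T_w$ and $T_{w'}$ are disjoint. By weak branching I may choose a nontrivial $h \in \Rst_G(w)$, and I would then consider the product $gh$. Since $w$ is a descendant of $v$ we have $\Rst_G(w) \leq \Rst_G(v)$, so $gh$ again lies in $\Rst_G(v)$, which is where we need it.

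It remains to verify that $gh$ has order at least $3$. The element $gh$ is nontrivial, because $g$ moves $w$ whereas $h^{-1} \in \Rst_G(w)$ fixes $w$, so $g \neq h^{-1}$. Using $g^2 = 1$ one computes $(gh)^2 = (g h g^{-1})\, h$, and the equivariance $g\,\Rst_G(w)\,g^{-1} = \Rst_G(w^g) = \Rst_G(w')$ shows that $ghg^{-1}$ is supported on $T_{w'}$, which is disjoint from the support $T_w$ of $h$. Restricting $(gh)^2$ to $T_w$ therefore returns $h$, which is nontrivial, so $(gh)^2 \neq 1$ and hence $gh \in \Rst_G(v)$ has order at least $3$.

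I do not expect a serious obstacle here, as the statement is elementary; the one point to handle with care — the real content of the argument — is exactly this disjoint-support bookkeeping. The conjugate $ghg^{-1}$ must be pushed into a subtree disjoint from that of $h$, so that in $(gh)^2 = (ghg^{-1})h$ the two commuting factors cannot cancel each other; everything else is routine group manipulation together with the basic fact that a nontrivial automorphism supported on $T_v$ must move a proper descendant of $v$ into a sibling subtree.
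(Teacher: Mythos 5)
Your argument is correct and is essentially the paper's proof: both reduce to the case $g^2=1$, take a nontrivial $h$ in the rigid stabilizer of a descendant $w$ moved by $g$, and exploit the disjointness of $T_w$ and $T_{w^g}$ to see that the product has order at least $3$. The only difference is cosmetic — the paper chases explicit vertices $w_1, w_1^{hg}, w_1^{(hg)^2}$ inside a proof by contradiction, while you phrase the same disjoint-support fact via $(gh)^2=(ghg^{-1})h$.
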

\begin{proof}
As $G$ is weakly branch, $\Rst_G(v)$ is not the trivial group. By contradiction let us assume that any nontrivial element of $\Rst_G(v)$ has order 2. Let a nontrivial element $g \in \Rst_G(v)$ be fixed. We can find descendants $u_1\neq u_2$ of $v$, such that $u_1^g=u_2$ . Let us choose a nontrivial element $h\in\Rst_G(u_1)$. As $h\in\Rst_G(v)$,  by our assumption it has order 2.

 We claim that $hg\in\Rst_G(v)$ has order at least three. To prove this we will find a vertex which has an orbit of size at least 3. Let $w_1\neq w_2$ be descendants of $u_1$, such that $w_1^h=w_2$. 
 Since $g$ maps descendants of $u_1$ to descendants of $u_2$, we have $w_1^{hg}=w_2^g=t_2\neq w_1,w_2$. Then 
 $w_1^{(hg)^2}=t_2^{hg}=t_2^g=w_2\neq w_1$. We see that $w_1, w_1^{hg}$ and $w_1^{(hg)^2}$ are pairwise distinct, therefore the order of $hg$ is at least 3.
\end{proof}

\bigskip

Now we will prove that the technical assumption we assumed in the proof of Theorem \ref{theorem:finitary} can be satisfied. We remind the reader that in the setting of Theorem \ref{theorem:finitary} the following were established:

\begin{enumerate} [(1)]
\item The random sets $(V^{k_0}_{1}, \ldots V^{k_0}_{j})$ are orbits of $H$ on $\mathcal{L}_{k_0}$;
\item $\Sh(V^{k_0}_1), \ldots, \Sh(V^{k_0}_j)$ are orbit-closures of $H$ on $\partial T$ and their union is $C_{H,k_0}$ almost surely;
\item $S \leq \Gamma_{k}$ is a finite subgroup with $p=\P[S \subseteq H]$ positive;
\item $(U^{k_0}_{1}, \ldots U^{k_0}_{j})$ are a realization of $(V^{k_0}_{1}, \ldots V^{k_0}_{j})$, and $S$ acts transitively on the $U^{k_0}_{i}$.
\item $V^{n}_{i}=\Sh_{\mathcal{L}_n} (V^{k_0}_{i})$ and $U^{n}_{i}=\Sh_{\mathcal{L}_n} (U^{k_0}_{i})$.
\end{enumerate}

\begin{lemma} \label{lemma:technical_assumption}
By possibly replacing $k$, $S$ and $p$ we can assume that for every $u \in U^{k}_{i}$ we can find some $s \in S$ such that $u$, $u^s$ and $u^{s^2}$ are distinct.
\end{lemma}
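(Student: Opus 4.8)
The plan is to leave the orbit data on $\mathcal{L}_{k_0}$ untouched while enlarging $S$ so that every vertex of $U_k$ acquires a cycle of length at least $3$ under some generator. The only genuinely new requirement beyond (1)--(5) is this $3$-cycle property, so it suffices to exhibit, for each of the finitely many vertices that currently sit only in $1$- or $2$-cycles of elements of $S$, one further automorphism lying in $H$ simultaneously with $S$ (with positive probability) that places the vertex in a longer cycle. I would then replace $S$ by the group generated by $S$ together with these finitely many automorphisms, replace $k$ by a level deep enough to contain all of their supports, and replace $p$ by the resulting positive probability.

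First I would manufacture candidate elements of $H$ by invariance. Since $H$ is an invariant random subgroup, every $\Gamma$-conjugate $S^{g}$ satisfies $\mathbb{P}[S^{g}\subseteq H]=\mathbb{P}[S\subseteq H]=p$. Feeding $r=\lceil 2/p\rceil$ such conjugates into Lemma \ref{lemma:intersection_probability} yields two indices $a,b$ with $\mathbb{P}[S^{g_a}\subseteq H \text{ and } S^{g_b}\subseteq H]\ge p^{3}/6$, so $\langle S^{g_a},S^{g_b}\rangle\subseteq H$ with positive probability. I would choose all conjugators $g_m$ from $\Rst_{\Gamma}(\mathcal{L}_{k_0})$, i.e.\ fixing $\mathcal{L}_{k_0}$ pointwise; then each $S^{g_m}$ induces exactly the same orbit partition $U^{k_0}_{1},\dots,U^{k_0}_{j}$ on $\mathcal{L}_{k_0}$ as $S$ does, so the generated group preserves properties (1), (2) and (4), and its containment probability $p^{3}/6$ keeps (3) intact. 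The price is that Lemma \ref{lemma:intersection_probability} hands me a pair $(a,b)$ that I do not control, which forces me to design the family so that \emph{every} surviving pair is equally useful.

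The mechanism for creating the long cycles is the merger of two transpositions sharing a vertex, exactly in the spirit of the order-$\ge 3$ element built in Lemma \ref{lemma:high_order_element}. For a problematic vertex $w$ pick $s\in S$ with $w^{s}=w'\neq w$; passing to a deep enough level I may assume $(w\,w')$ is a transposition of $s$. Using weak branching (Lemma \ref{lemma:high_order_element}), the rigid stabilizers below $\mathcal{L}_{k_0}$ are rich enough that I can take the conjugators $g_m\in\Rst_{\Gamma}(\mathcal{L}_{k_0})$ to fix $w$ while relocating $w'$ to $r$ pairwise distinct descendants $w'_m$ on a common deep level. Then $s^{g_m}$ is the transposition $(w\,w'_m)$, and for \emph{any} two of them the product $(w\,w'_a)(w\,w'_b)=(w\,w'_b\,w'_a)$ is a $3$-cycle through $w$ lying in $\langle S^{g_a},S^{g_b}\rangle$. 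Since only finitely many vertices are problematic and only finitely many orbits occur, a single $g_m$ can carry out all the required relocations in disjoint subtrees (iterating the construction finitely often when relocations for different vertices would collide), while vertices already lying in $3$-cycles of $S$ are kept $3$-cycled by arranging the $g_m$ to fix them and descending far enough that their cycles survive the conjugation.

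The main obstacle is precisely reconciling the non-constructive output of Lemma \ref{lemma:intersection_probability} --- which only guarantees that \emph{some} pair of conjugates lies in $H$ together --- with the need to realise a long cycle at each prescribed vertex. My resolution is to engineer the conjugators so that whichever pair survives already merges the relevant transpositions into a $3$-cycle, and to descend to a level deep enough (licensed by Lemma \ref{lemma:high_order_element}) that there is room to move the partners $w'$ into distinct positions while leaving the $\mathcal{L}_{k_0}$-orbit structure, and hence properties (1)--(5), intact. Organising a single family of conjugators so that it works for all problematic vertices at once, without destroying the cycles already present, is the technical heart of the argument.
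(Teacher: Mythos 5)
Your probabilistic skeleton matches the paper's: use IRS-invariance to get $\mathbb{P}[S^{g}\subseteq H]=p$ for each conjugate, feed $r=\lceil 2/p\rceil$ conjugates into Lemma \ref{lemma:intersection_probability} to find a pair lying in $H$ simultaneously with probability at least $p^{3}/6$, and pass to the group they generate. But the mechanism you use to manufacture the long cycle has a genuine gap. You treat $s^{g_a}$ and $s^{g_b}$ as if they were literally the transpositions $(w\,w'_a)$ and $(w\,w'_b)$, so that their product is a $3$-cycle. They are not: they are full tree automorphisms, and only their cycles \emph{through $w$} are controlled. Writing $\sigma=s^{g_a}s^{g_b}$, one gets $w^{\sigma}=(w'_a)^{s^{g_b}}\neq w$ easily, but $w^{\sigma^{2}}=w$ is not excluded --- it fails only if $(w'_a)^{s^{g_b}}\neq (w'_b)^{(s^{g_a})^{-1}}$, and both sides are images you have not controlled. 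Ruling this out requires exactly the kind of careful bookkeeping you defer to "the technical heart." (A small symptom of the same looseness: automorphisms preserve levels, so $g_m$ cannot relocate $w'$ to a \emph{descendant}.) The paper's mechanism is different and avoids the issue entirely: it takes $\gamma_j\in\Rst_{\Gamma}(v_j)$ of order at least $3$ (this is where Lemma \ref{lemma:high_order_element} is actually used --- to supply the order-$\geq 3$ elements, not to license relocations) at distinct descendants $v_j$ of a single moved vertex $v$; then $s^{\gamma_j}(s^{\gamma_l})^{-1}$ fixes the level of the $v_j$ pointwise and has sections conjugate to $\gamma_j^{\pm1},\gamma_l^{\pm1}$, so a long cycle appears deep inside $T_{v_j}$ for \emph{any} surviving pair $(j,l)$, with no interaction to manage.

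The second, larger omission is that you try to create a long cycle at every problematic vertex directly, which forces you to design one family of conjugators serving all vertices and all surviving pairs at once --- the collision problems you acknowledge but do not resolve. The paper needs only \emph{one} long cycle per $H$-orbit: its "first case" re-extracts, at a deeper level and with positive probability, a finite subgroup $S'$ that both contains the long-cycle element and acts transitively on $U^{k}_i$ (using that $H$ itself acts transitively on each $V^{k}_i$), and then conjugation of that single element inside $S'$ sweeps the long cycle over the entire orbit, and hence over all of $U^{k'}_i$ below it. Without this transitivity step your argument must succeed vertex-by-vertex across all of $U^{k}_i$, which is where it currently cannot be completed as stated.
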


\begin{remark}
In the case when $d$ is not a power of 2 it can be shown that Lemma \ref{lemma:technical_assumption}
 is implied by the earlier properties, simply because a transitive permutation group with all nontrivial elements being fixed point free and of order 2 can only exist on $2^k$ points. For the case when $d$ is a power of 2 however we can only show Lemma \ref{lemma:technical_assumption} by a probabilistic argument and by increasing $k$ and $S$ if necessary.
\end{remark}

\begin{proof}
Assume that there is an $s \in S$ which admits a long cycle -- that is a cycle of length at least 3 -- on $U^{k}_{i}$ for some $i$. In this first case we define $k'$ such that $H_{k'}$ acts transitively on all the $V^{k}_i$ with probability $1 - \frac{p}{2}$. Then 

\[\P [S \subseteq H_{k'} \textrm{ and $H_{k'}$ is transitive on the $V^{k}_i$}] \geq \frac{p}{2} >0.\]

If $S \subseteq H_{k'}$ then the $V^{k}_i$ are realized as the $U^{k}_i$. Now we enlist all subsets $S'_l$ in $\Gamma_{k'}$ that contain $S$ and act transitively on the $U^{k}_i$. There are finitely many, so we can find some $S'$ with

\[\P [S' \subseteq H_{k'}] \geq p' >0.\]

We can assume $S'$ to be a subgroup, and by having $s \in S' $ we will show that long cycles of $S'$ cover $U^{k}_{i}$. Indeed, by conjugating $s$ one can move the cycle around in $U^{k}_{i}$, and by the transitivity of $S'$  we get that the whole of $U^{k}_{i}$ is covered. This in turn implies that  long cycles of $S'$ cover $U^{k'}_{i}$ as well.

If on the other hand $S$ acts on $U^{k}_{i}$ by involutions, we will increase $k$ and $S$ while keeping $p$ positive such that the first case holds.

Let $r=\left \lceil \frac{2}{p} \right \rceil$. Furthermore let $k' > k$ such that the shadow of a vertex $v \in U^{k}_{i}$ on $\mathcal{L}_{k'}$ contains at least $r$ vertices, namely $\{v_1, v_2, \ldots, v_r, \ldots\} \subseteq U^{k'}_{i}$. Let $\gamma_1,\gamma_2, \ldots, \gamma_{r} \in \Gamma_{k'+t}$ such that $\gamma_i\in\Rst_{\Gamma_{k'+t}}(v_i)$ and $\gamma_i$ has order at least 3 by Lemma \ref{lemma:high_order_element}, i.e. $\gamma_i$ has a long cycle on $\mathcal{L}_{k'+t}$.

As $H$ is an IRS we have \[\P [S^{\gamma_j} \subseteq H] = \P [S \subseteq H] = p.\]

By Lemma \ref{lemma:intersection_probability} we can find some $j,l$ such that \[\P \big[(S^{\gamma_j} \cup S^{\gamma_l}) \subseteq H\big] \geq \frac{p^3}{6}.\]

Set $S' = \langle S^{\gamma_j} \cup S^{\gamma_l} \rangle$. Pick some $s \in S$ which moves $v \in \mathcal{L}_{k}$. It is easy to check that $s^{\gamma_j} \cdot (s^{\gamma_l})^{-1}  \in S'\cap \Rst_{\Gamma_{k'+t}}(\mathcal{L}_{k})$ has nontrivial sections only at $v_j$, $v_l$, $v_j^{s}$ and $v_{l}^s$, and these sections are some conjugates of $\gamma_j$ and $\gamma_l$, and therefore $s^{\gamma_j} \cdot (s^{\gamma_l})^{-1}$ has a long cycle on $\mathcal{L}_{k'+t}$. So replacing $S$ by $S'$, $k$ by $k'+t$ and $p$ by $\frac{p^3}{6}$ we get to the first case.

Repeating the argument for the first case at most $j$ times we make sure that all $U_{k}^i$ are covered by long cycles, which finishes the proof.
\end{proof}

 \begin{remark}
For the proof of Theorem \ref{theorem:branch_group_closure} one can modify this proof such that instead of $\Gamma_n$, $H_k=\Gamma_n\cap H$ and $S\subseteq \Gamma_n$ we use $G_n=\pi_n(\Gamma)$, $H_n=\pi_n(H)$ and $\pi_n(S)\subseteq G_n$. Another difference is that $H_n$ automatically acts transitively on all the $V_i^n$, so there is no need to distinguish between $k_0$ and $k$.
\end{remark}

\bibliographystyle{alpha}
\bibliography{references}

%

\end{document}